\documentclass[11pt]{amsart}

\usepackage{amssymb,amsmath,amssymb}
\usepackage{abstract}
\usepackage{amsthm}
\usepackage{mathrsfs}
\usepackage{graphicx}
\usepackage{subcaption}
\usepackage{float}
\usepackage{epstopdf}
\usepackage{makecell,rotating,multirow,diagbox}
\usepackage{booktabs}
\usepackage{makecell}
\usepackage{color}
\usepackage{url}
\usepackage[colorlinks,linkcolor=blue,anchorcolor=blue,citecolor=red]{hyperref}
\usepackage{appendix}
\usepackage{enumerate}
\usepackage{cite}
\usepackage{graphicx}
\usepackage{subcaption}

\def\bdiv{\mathrm{div}}

\newtheorem{theorem}{Theorem}[section]

\newtheorem{lemma}{Lemma}[section]
\newtheorem{remark}{Remark}[section]

\newtheorem{definition}{Definition}[section]

\numberwithin{equation}{section}

\DeclareMathOperator*{\argmin}{arg\,min}
\usepackage[svgnames]{xcolor}

\begin{document}

	\title[A mixed residual method for biharmonic equations]{A mixed residual method for biharmonic equations in spectral Barron spaces}
    \author{Mengjia Bai}
    \address{School of Mathematical Sciences, Fudan University, Shanghai 200433, China}
    \email{bai\_mj@fudan.edu.cn (M. Bai), 25110180015@m.fudan.edu.cn (K. Gai), slu@fudan.edu.cn (S. Lu)}
    \author{Kuo Gai}
    \author{Shuai Lu$^{\dag}$}
    \address{$^\dag$ Corresponding author, S. Lu.}
    
    \keywords{Biharmonic equation, Spectral Barron space, Shallow neural network, Error estimate. AMS class: 65N15, 68Q25}
	\maketitle

	\begin{abstract}
We propose a mixed residual method (MIM) for numerically solving the biharmonic equation with nonhomogeneous clamped boundary conditions. By establishing the well-posedness of the biharmonic equation in spectral Barron spaces, we derive an error bound for MIM that relates shallow neural network approximations to the exact solution and overcomes the curse of dimensionality. This error bound consists of two components: the first corresponds to the approximation error of the neural network, while the second represents the generalization error arising from randomly sampled training data. Several numerical experiments are presented to demonstrate the effectiveness of the proposed method.

         
%
    
		\end{abstract}

\section{Introduction}
Partial differential equations (PDEs) play a central role in modeling a wide range of phenomena in the natural sciences. Over the past decades, numerous reliable and efficient numerical techniques have been developed for solving them. Classical discretization approaches, such as finite difference and finite element methods, are well established and widely used. However, applying these conventional methods to high-dimensional problems often encounters additional obstacles. The primary difficulty is that their computational cost increases exponentially with the dimension, which displays a challenge commonly known as the curse of dimensionality. This exponential growth renders approximating solutions to high-dimensional PDEs computationally prohibitive.

In recent years, neural network–based methodologies have increasingly demonstrated their value in numerical methods for PDEs, particularly in mitigating the curse of dimensionality \cite{han2018solving,raissi2019physics,yu2018deep,lyu2020enforcing,lyu2022mim,cai2020deep,zang2020weak,lee2025finite}. Among these, the Deep Galerkin Method (DGM) \cite{sirignano2018dgm} and Physics-Informed Neural Networks (PINNs) \cite{raissi2019physics} construct their training neural networks by minimizing the PDE residual in a least-squares setting. The Deep Ritz Method (DRM) \cite{yu2018deep}, in contrast, bases its loss function on the variational formulation of the PDE whenever such a formulation is available.
It should be noted that these above methods are primarily designed for low-order PDEs and may become computationally expensive for high-order PDEs, as evaluating derivatives of various orders is particularly costly in high dimensions. To address this issue, \cite{lyu2022mim,lyu2020enforcing} introduced the deep mixed residual method (MIM), which reformulates a high-order PDE into a system of lower-order equations and employs a single neural network to approximate both the solution and its derivatives. This reformulation reduces computational complexity and improves stability for high-order problems.

While neural network–based algorithms for solving PDEs have gained popularity and demonstrated empirical success, their theoretical analysis has consequently emerged as a key research focus. Further investigations are encouraged to extend the numerical analysis of such algorithms to a broader range of problems.  
Recent theoretical advances in this direction can be found in \cite{luo2024two,mishra2023estimates,shin2020convergence,shin2023error,xu2020finite,lu2021priori,bertoluzza2024wan} and the references therein. Most of these studies are situated within the Sobolev space framework and provide insights into the approximation properties and error bounds of neural network-based methods. It should be noted that some researchers have recently considered an alternative framework based on the spectral Barron space. For properties of such spaces and regularity results for elliptic PDEs, we refer to \cite{chen2023regularity,choulli2025functional,lumathe2025SIAP}. However, error estimates for PDEs within the spectral Barron space setting remain limited, and most existing analyses focus on second-order elliptic equations. For instance, \cite{lu2021priori,lu2022priori} analyzed the generalization error of the deep Ritz method (DRM) for the Poisson and Schrödinger equations, under the assumption that the PDE solutions belong to the spectral Barron space. \cite{li2024priori} derived a priori error estimates for the MIM based on the spectral Barron space for the Poisson equation. For higher-order elliptic equations with specific boundary conditions, we refer to \cite{bai2025error}, where a priori error estimates for the MIM within the spectral Barron space framework can be found.

In this work, we investigate the MIM for biharmonic equations with clamped boundary conditions in spectral Barron spaces. The biharmonic equation is a well-known higher-order PDE that arises in various fields of physics and applied mathematics, particularly in elasticity theory and Stokes flow. It plays a critical role in applications such as scattered data fitting with thin plate splines \cite{wahba1990spline}, fluid mechanics \cite{greengard1998integral,ferziger2002computational}, and linear elasticity \cite{christiansen1975integral,constanda1995boundary}. Based on recent regularity results for second-order elliptic equations, we provide regularity and error estimates for the boundary value problem. The key contribution of the present work is to extend such novel solution theory to biharmonic problems in high dimensions. While the framework of our error analysis can, in principle, be adapted to a broader class of higher-order elliptic equations, the associated boundary conditions are often considerably more intricate.

The remainder of this paper is organized as follows. In Section \ref{se2}, we present the basic setup and the main results of this work. The well-posedness theory in spectral Barron space is given in Section \ref{se3}. This section also introduces a general framework based on bilinear forms to outline the error decomposition. In Section \ref{se4}, we provide the proof of the main theorem. Finally, we conduct numerical experiments in Section \ref{se5} to support our theoretical results.

\section{Set-up and Main results}\label{se2}

In this section, we describe the set-up of the present work and state the main results, whose proofs can be found in the subsequent sections.

\subsection{Set-up of PDEs and MIM} 
Let $\Omega \subset (0,1)^d \subset \mathbb{R}^d$ be a domain with a sufficiently smooth boundary $\partial\Omega$.
We consider the following biharmonic equation equipped with the clamped boundary condition:
\begin{equation}\label{eq_biharmonicequation}
		\left\{
		\begin{aligned}
			\Delta^2 u = &\,f, \quad \mbox{in }\Omega,\\
			u = g,\ &	\partial_{\boldsymbol{n}} u = h, \quad \mbox{on }\partial\Omega,		
			\end{aligned}
			\right.
		\end{equation}
where $\boldsymbol{n}$ is the outward unit normal vector on the boundary $\partial\Omega$.

In this work, we employ the mixed formulation (MIM) to solve the biharmonic equation \eqref{eq_biharmonicequation} in order to mitigate the effects of high-order operators. Specifically, we introduce auxiliary shallow neural networks $\varphi_0$ and $\varphi_1$ to approximate $u$ and $\Delta u$, respectively. Combining the squared residual loss with a penalty term yields the mixed residual loss function:
\begin{equation}\label{eq_Lossfunctional}
\begin{aligned}
\mathcal{L}(\boldsymbol{u})
=&
    \|\Delta \varphi_{1} - f \|_{L^2(\Omega)}^2
    + 
    \|\Delta \varphi_0 -\varphi_{1}\|_{L^2(\Omega)}^2 \\
     &+
    \lambda\big(\|\varphi_0 - g \|_{L^2(\partial\Omega)}^2 
    +
    \|\partial_{\boldsymbol{n}}\varphi_0 - h \|_{L^2(\partial\Omega)}^2 \big), 
    \end{aligned}
\end{equation}
with $\boldsymbol{u} = (\varphi_0,\, \varphi_1)$. Here $\lambda > 0$ denotes a hyperparameter for which a specific choice is adopted in the analysis and numerical experiments. In addition, for simplicity and without affecting the proof of our main results, we assume that the penalties for both boundary conditions are equivalent. Meanwhile, we introduce the corresponding empirical loss function defined as
\begin{equation}\label{eq_EmpiricalLoss}
\begin{aligned}
	\mathcal{L}_n(\boldsymbol{u})
	=&
	\frac{|\Omega|}{n}\sum_{j=1}^{n} \left(\big|(\Delta\varphi_{1} - f) (X_j)\big|^2
    +
    \big|(\Delta\varphi_{0} - \varphi_1) (X_j)\big|^2\right)\\
	&+
	\frac{\lambda|\partial\Omega|}{\widehat{n}}\sum_{j=1}^{\widehat{n}} \left(
   \big|(\varphi_0 - g )(Y_j)\big|^2 + \big|(\partial_{\boldsymbol{n}}\varphi_0 - h)(Y_j)\big|^2 \right).
    \end{aligned}
\end{equation}
Here, $\{X_j\}_{j=1}^{n} $ and $\{Y_j\}_{j=1}^{\widehat{n}} $ are randomly sampled from $\Omega$  and $\partial\Omega$, respectively. 

Given an empirical loss function $\mathcal{L}_n$, the empirical loss minimization algorithm seeks $\boldsymbol{u}_{\theta^*}$ that minimizes $\mathcal{L}_n$, i.e., 
\begin{equation*}
    \boldsymbol{u}_{\theta^*} = \argmin_{\boldsymbol{u} \in \mathcal{F}_{\Theta}(B)} \mathcal{L}_n,
\end{equation*}
where $\mathcal{F}_{\Theta}(B)$ is a hypothesis class of functions parameterized by neural networks to be introduced later.

\subsection{Neural networks} 
A neural network is a nonlinear parametric model defined as a concatenation of affine maps and activation functions. 
Let $\boldsymbol{x}\in \mathbb{R}^d$ be the input element and $\boldsymbol{y}\in \mathbb{R}^{m}$ be the output vector function. An $L$-layer neural network can be written as:
\begin{equation*}
\begin{aligned}
\boldsymbol{h}_0& = \boldsymbol{x},\\
\boldsymbol{h}_\ell
	&=
\sigma(
W_\ell \boldsymbol{h}_{\ell-1} + b_\ell
)
	\quad \mbox{for} \quad
	\ell=1,\cdots, L-1,\\
 \boldsymbol{y}
	&=
W_L \boldsymbol{h}_{L-1} + b_L.
 \end{aligned}
	\end{equation*}
Especially, a neural network is classified as shallow if its depth $L=2$, and as deep otherwise.
Here, $W_\ell$ denotes the weight matrix of the $\ell$-th layer, and $\boldsymbol{b}_\ell$ represents the bias vector. The function $\sigma$ is referred to as an activation function, which introduces non-linearity to enhance the model's expressive power.

In this work, we focus on shallow neural networks by fixing the depth to $L = 2$. In particular, we choose the activation function $\sigma = \sigma_k := \mathrm{ReLU}^k$, defined as follows:
\begin{definition}\label{def: relu}
    Let $k\in \mathbb{N}$, the nonlinear univariate function $\mathrm{ReLU}^k(x)$ is defined by 
    \begin{equation*}
    \mathrm{ReLU}^k(x)
    =
    \left\{
        \begin{aligned}
            &x^k,
            \quad 
            &x\geq 0,\\
            &0,\quad
            &x<0.
        \end{aligned}
        \right.
    \end{equation*}
     In particular, we denote $\mathrm{ReQU}(x) =\mathrm{ReLU}^2(x)$ and $\mathrm{ReCU}(x) =\mathrm{ReLU}^3(x)$. 
\end{definition}

\subsection{Main results}
The goal of this paper is to obtain quantitative estimates for the error between the minimal solution $\boldsymbol{u}_{\theta^*}$, which is a set of shallow neural networks computed from the finite data points, and the exact solutions of the biharmonic equation \eqref{eq_biharmonicequation}. Our primary interest is to derive such estimates that scale mildly with respect to the increasing dimension $d$. To this end, it is necessary to assume that the true solutions lie in a hypothesis function space of a certain complexity. Specifically, referring to \cite{choulli2025functional}, we consider the spectral Barron space defined below. 

For the sake of completeness, we recall some necessary definitions of spectral Barron spaces from \cite{choulli2025functional}.
Concerning whole space $\mathbb{R}^d$, we define the spectral Barron space $\mathcal{B}^s(\mathbb{R}^d)$ for $s\geq 0$ as
\begin{equation*}
	\mathcal{B}^s(\mathbb{R}^d) =
	 \{f\in C_b^0(\mathbb{R}^d); \langle\boldsymbol{\xi}\rangle^s\hat{f}\in L^1(\mathbb{R}^d)\},
	\end{equation*}
with the natural norm
\begin{equation*}
	\|f\|_{\mathcal{B}^s(\mathbb{R}^d)}
	:=
	\|\langle\boldsymbol{\xi}\rangle^s\hat{f}\|_{L^1(\mathbb{R}^d)},\quad
	f\in \mathcal{B}^s(\mathbb{R}^d),
	\end{equation*}
where $\langle\boldsymbol{\xi}\rangle := \sqrt{1+|\boldsymbol{\xi}|^2} $, and $C_b^0$ is the space of bounded continuous functions, i.e. $C_b^0 = L^\infty \cap C^0$. This definition is based on the symbol of $I-\Delta$; we omit further details on the properties of the spectral Barron space and refer the reader to \cite{choulli2025functional} for a more comprehensive discussion.

To further define a spectral Barron space on a bounded domain $\Omega$, for $s\geq 0$, we define the closed subspace $\mathcal{F}_s$ of $\mathcal{B}^s(\mathbb{R}^d)$ as follows:
\begin{equation*}
	\mathcal{F}_s
	:=
	\{f\in \mathcal{B}^s(\mathbb{R}^d);\ \mbox{sup}(f)\subset \mathbb{R}^d\setminus \Omega\}.
	\end{equation*} 
Then we introduce the quotients space $\mathcal{B}^s(\Omega): =\mathcal{B}^s(\mathbb{R}^d)/\mathcal{F}_s$. In the sequel, $\pi_s: \mathcal{B}^s(\mathbb{R}^d)\rightarrow \mathcal{B}^s(\Omega)$ denotes the standard quotient map. For any $g_1, g_2\in \mathcal{B}^s(\mathbb{R}^d)$, the equality $\pi_s(g_1)=\pi_s(g_2)$ implies that $g_1-g_2\in \mathcal{F}_s$, or equivalently $g_1|_{\bar{\Omega}} = g_2|_{\bar{\Omega}}$. We endow $\mathcal{B}^s(\Omega)$ with the quotient norm defined by
\begin{equation}\label{eq_boundedbarronnorm}
 \|f\|_{\mathcal{B}^s(\Omega)}
	=
	\inf\{g\in \mathcal{B}^s(\mathbb{R}^d);\ \pi_s{g}=f\},\quad f\in \mathcal{B}^s(\Omega).
	\end{equation}
Consequently, $\mathcal{B}^s(\Omega)$ is a Banach space with respect to the norm $\|\cdot\|_{\mathcal{B}^s(\Omega)}$.

Furthermore, since the biharmonic equation considered in \eqref{eq_biharmonicequation} has a nonhomogeneous boundary condition, we set $\Gamma = \partial\Omega$ and take $s > 0$. We define the trace space 
\begin{equation*}
	\mathcal{B}^s(\Gamma)
	:=
	\{g=u|_{\Gamma},;\ u\in \mathcal{B}^s(\Omega)\},
	\end{equation*}
equipped with the quotient norm 
\begin{equation*}
	\|g\|_{\mathcal{B}^s(\Gamma)}
	:=
	\inf\{\|u\|_{\mathcal{B}^s(\Omega)}; u|_\Gamma=g\}.
	\end{equation*}

The most important property of spectral Barron space is that functions in these spaces can be well approximated by shallow neural networks without the curse of dimensionality, as shown, for instance, in \cite{KluBarron2018IEEE, lumathe2025SIAP}. To make this more precise, let us define the class of shallow neural networks to be used for solving PDEs. Given a constant $B > 0$ and a number of neurons $m$, we define the hypothesis class
\begin{equation}\label{eq_networkfunctionclass}
\mathcal{F}_{m}(B)= \left\{ \sum_{i=1}^m \gamma_i \sigma_3(\boldsymbol{\omega}_i\cdot \boldsymbol{x} + t_i)\in \mathbb{R} : |\boldsymbol{\omega}_i|_1 =1, t_i\in [-1,1), \sum_{i=1}^m|\gamma_i|\leq B \right\}, 
\end{equation}
where $\sigma_3 = \mathrm{ReCU}$ is the activation function in Definition \ref{def: relu}. 

Our first result establishes the well-posedness of the biharmonic equation \eqref{eq_biharmonicequation} in spectral Barron spaces, focusing on the existence of solutions.
\begin{theorem}\label{thm1}
	Let $ s\geq 0,\ k>s+d/2$. For all $(f,g,h)\in H^k(\Omega)\times H^{k+7/2}(\partial\Omega)\times H^{k+5/2}(\partial\Omega)$, then the BVP \eqref{eq_biharmonicequation} admits a unique solution $u\in \mathcal{B}^{4+s}(\Omega)$. Furthermore, 
     the following inequality holds:
	\begin{equation*}
		\|u\|_{\mathcal{B}^{4+s}(\Omega)}
		\leq
		C(\|f\|_{H^k(\Omega)}
        +\|g\|_{ H^{k+7/2}(\partial\Omega)}
        +\|h\|_{H^{k+5/2}(\partial\Omega)}),
		\end{equation*}
        where $C > 0$ is a constant.
	\end{theorem}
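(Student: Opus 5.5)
The plan is to reduce the claim to Sobolev regularity for the biharmonic Dirichlet problem and then pass to spectral Barron spaces through an embedding $H^{\ell}\hookrightarrow\mathcal{B}^{s'}$ for $\ell>s'+d/2$. The reference results in \cite{choulli2025functional,chen2023regularity} give such an embedding; on the whole space it is just Cauchy--Schwarz:
\begin{equation*}
\int\langle\boldsymbol{\xi}\rangle^{s'}|\hat{v}|\,d\boldsymbol{\xi}\le\Big(\int\langle\boldsymbol{\xi}\rangle^{-2(\ell-s')}\,d\boldsymbol{\xi}\Big)^{1/2}\|v\|_{H^\ell(\mathbb{R}^d)}.
\end{equation*}
The first integral is finite exactly when $\ell-s'>d/2$.

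Concretely, take $\ell=k+4$ and $s'=4+s$. Then $\ell-s'=k-s>d/2$ by hypothesis, so $H^{k+4}(\mathbb{R}^d)\hookrightarrow\mathcal{B}^{4+s}(\mathbb{R}^d)$ continuously. To use this on $\Omega$ we need a bounded extension operator $E:H^{k+4}(\Omega)\to H^{k+4}(\mathbb{R}^d)$, which is available since $\partial\Omega$ is smooth (Stein or Calder\'on extension). Because $\pi_{4+s}(Eu)=u$, the quotient-norm definition \eqref{eq_boundedbarronnorm} gives
\begin{equation*}
\|u\|_{\mathcal{B}^{4+s}(\Omega)}\le\|Eu\|_{\mathcal{B}^{4+s}(\mathbb{R}^d)}\le C\|Eu\|_{H^{k+4}(\mathbb{R}^d)}\le C\|u\|_{H^{k+4}(\Omega)}.
\end{equation*}
The theorem therefore reduces to showing that the BVP \eqref{eq_biharmonicequation} has a unique solution $u\in H^{k+4}(\Omega)$ satisfying
\begin{equation*}
\|u\|_{H^{k+4}(\Omega)}\le C\big(\|f\|_{H^k(\Omega)}+\|g\|_{H^{k+7/2}(\partial\Omega)}+\|h\|_{H^{k+5/2}(\partial\Omega)}\big).
\end{equation*}

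\textbf{Existence, uniqueness and the Sobolev estimate.} I first lift the boundary data. The trace theorem for the pair $(u|_\Gamma,\partial_{\boldsymbol n}u|_\Gamma)$ says that $H^{k+4}(\Omega)\to H^{k+7/2}(\Gamma)\times H^{k+5/2}(\Gamma)$ is surjective with a bounded right inverse. This gives $G\in H^{k+4}(\Omega)$ with $G=g$ and $\partial_{\boldsymbol n}G=h$ on $\Gamma$, and $\|G\|_{H^{k+4}}\lesssim\|g\|_{H^{k+7/2}(\Gamma)}+\|h\|_{H^{k+5/2}(\Gamma)}$. Next set $w=u-G$, which must solve
\begin{equation*}
\Delta^2 w=f-\Delta^2G\in H^k(\Omega),\qquad w\in H_0^2(\Omega).
\end{equation*}
The bilinear form $\int_\Omega\Delta w\,\Delta v$ is coercive on $H^2_0(\Omega)$, because $\|\Delta w\|_{L^2}=\|D^2w\|_{L^2}$ for $w\in H_0^2$ and Poincar\'e's inequality then controls the full $H^2$ norm. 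Lax--Milgram therefore gives a unique weak solution $w\in H_0^2(\Omega)$.

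\textbf{Main obstacle: higher regularity.} The hard step is raising $w$ from $H_0^2$ to $H^{k+4}$, with
\begin{equation*}
\|w\|_{H^{k+4}}\le C\|f-\Delta^2G\|_{H^k}.
\end{equation*}
I would first try to cite Agmon--Douglis--Nirenberg elliptic regularity. The Dirichlet boundary operators $(1,\partial_{\boldsymbol n})$ satisfy the complementing (Lopatinskii--Shapiro) condition for $\Delta^2$, so shift estimates hold at every order on a smooth domain. The lower-order term in the ADN estimate is removed by uniqueness through the usual compactness argument.

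A fallback is to apply the second-order regularity results already used for spectral Barron spaces twice, writing $v=\Delta u$. This fails for clamped conditions, since no boundary value of $v$ is prescribed. That is why I rely on the ADN route rather than a naive splitting.

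Uniqueness in $\mathcal{B}^{4+s}(\Omega)$ follows from uniqueness in $H^2$. By the same Cauchy--Schwarz argument in reverse, $\mathcal{B}^{4+s}$ with $s\ge0$ gives $C^4$ regularity up to the boundary, hence $u\in H^4(\Omega)$ on the bounded domain. The difference of two solutions then lies in $H_0^2(\Omega)$ and solves the homogeneous problem, so it vanishes by the Lax--Milgram uniqueness above.

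Combining the three displayed bounds, applied to $u=w+G$, gives the claimed inequality.
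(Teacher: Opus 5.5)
Your proposal is correct and follows essentially the paper's route: Sobolev regularity $u\in H^{k+4}(\Omega)$ for the clamped problem (which the paper simply cites from Gazzola--Grunau--Sweers rather than sketching via lifting, Lax--Milgram and ADN), extension to $\mathbb{R}^d$, and the embedding $H^{k+4}(\mathbb{R}^d)\hookrightarrow\mathcal{B}^{4+s}(\mathbb{R}^d)$ for $k-s>d/2$. You differ in two small ways. You embed directly into $\mathcal{B}^{4+s}$ and read the bound off the quotient norm, whereas the paper embeds into $\mathcal{B}^{4+t}$ for some $s<t<k-d/2$ and then invokes its compactness Lemma~\ref{lem_existence}, a detour your argument shows can be bypassed. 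You also make uniqueness in $\mathcal{B}^{4+s}(\Omega)$ explicit via $\mathcal{B}^{4+s}\subset C^4(\bar\Omega)\subset H^4(\Omega)$ and $H^2_0$-uniqueness, which the paper leaves implicit; the $C^4$ regularity comes from $\xi^\alpha\hat v\in L^1$ for $|\alpha|\le 4$, not from a reversed Cauchy--Schwarz.
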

We present the proof in Section \ref{subse_Wellposedness}.

Based on the above existence results, we observe that the solution of \eqref{eq_biharmonicequation} lies in a spectral Barron space. Furthermore, the quantitative estimates for the error between the exact solution and the minimal solution $\boldsymbol{u}_{\theta^*}$ are given as follows.
\begin{theorem}\label{thm2}
Let $u^*\in \mathcal{B}^6(\Omega)$ be the solution of biharmonic problem \eqref{eq_biharmonicequation}, and $K=\max\{\|u^*\|_{\mathcal{B}^6(\Omega)}, \|f\|_{L^\infty(\Omega)}, \|g\|_{L^\infty(\partial\Omega)}, \|h\|_{L^\infty(\partial\Omega)} \}$. Denote $$\boldsymbol{u}_{\theta^*}
=
(\varphi_0^{*},\, \varphi_1^*)
=
\argmin_{\varphi_0,\varphi_1\in \mathcal{F}_{m}(c\|u^*\|_{\mathcal{B}^6(\Omega)})}
\mathcal{L}_n(\varphi_0,\varphi_1)$$
with a constant $c>0$. Then there holds
    \begin{equation*}
		\begin{aligned}
			\mathbb{E}\|\varphi_0^* - u^*\|_{H^1(\Omega)}^2
            +
			\mathbb{E}\|\varphi_1^* - \Delta u^*\|_{L^2(\Omega)}^2
            +
            \mathbb{E}\|\Delta\varphi_1^* - \Delta^{2} u^*\|_{L^2(\Omega)}^2
			\\
			\leq
			C\|u^*\|_{\mathcal{B}^{6}(\Omega)} m^{-(\frac{1}{2}+\frac{1}{3d})}
			+
			K n^{-\frac{1}{4}},
		\end{aligned}
	\end{equation*}
	where $C>0$ is a constant, and the expectation is taken on the random sampling of training data in $\Omega$ and $\partial\Omega$.
\end{theorem}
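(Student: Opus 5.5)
The plan is the standard decomposition into a stability (coercivity) estimate for the mixed loss, an approximation error, and a generalization error.

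\textbf{Step 1 (stability).} Set $e_0=\varphi_0-u^*$ and $e_1=\varphi_1-\Delta u^*$. Then $\Delta^2u^*=f$ and the boundary data give
\begin{equation*}
\mathcal{L}(\boldsymbol{u})=\|\Delta e_1\|_{L^2(\Omega)}^2+\|\Delta e_0-e_1\|_{L^2(\Omega)}^2+\lambda\big(\|e_0\|_{L^2(\partial\Omega)}^2+\|\partial_{\boldsymbol{n}}e_0\|_{L^2(\partial\Omega)}^2\big).
\end{equation*}
I would show that
\begin{equation*}
\|e_0\|_{H^1(\Omega)}^2+\|e_1\|_{L^2(\Omega)}^2+\|\Delta e_1\|_{L^2(\Omega)}^2\le C\,\mathcal{L}(\boldsymbol{u})
\end{equation*}
for a suitable fixed $\lambda$, possibly after adding a boundary term on $e_1$ that the network regularity allows us to control. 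The natural route is to set $w=\Delta e_0$, which satisfies $\Delta^2 e_0=\Delta w$ with $\Delta w=\Delta e_1+\Delta(w-e_1)$.

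A cleaner route is a duality/Green identity. Let $z$ solve $\Delta^2 z=e_0$ with clamped conditions $z=\partial_{\boldsymbol{n}}z=0$ on $\partial\Omega$. Integrating by parts twice gives
\begin{equation*}
\|e_0\|^2=\int \Delta^2 e_0\, z+\int_{\partial\Omega}\big(e_0\,\partial_{\boldsymbol{n}}\Delta z-\partial_{\boldsymbol{n}}e_0\,\Delta z\big),
\end{equation*}
where $\int\Delta^2 e_0\,z=\int\Delta e_1 z+\int(\Delta e_0-e_1)\Delta z$. Elliptic $H^4$ regularity of $z$ and the trace theorem then bound $\|e_0\|_{L^2}$ by $\sqrt{\mathcal L}$. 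The $H^1$ norm is handled by the same argument with $e_0$ replaced by $-\Delta e_0$ plus a lower-order correction, i.e.\ the dual problem $\Delta^2 z=-\Delta e_0$, using $H^3$ regularity. The bound on $e_1$ follows from $\|e_1\|\le\|\Delta e_0-e_1\|+\|\Delta e_0\|$ together with an $H^{-1}$-type estimate for $\Delta e_0$; alternatively I would use a second duality argument with a Dirichlet Laplacian problem for $e_1$.

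This step is the main obstacle. The clamped conditions give no direct boundary information on $e_1$, so coercivity in $\|e_1\|_{L^2}$ must come entirely from duality. I expect this is why only $H^1$ for $e_0$ (not $H^2$) appears in Theorem~\ref{thm2}.

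\textbf{Step 2 (approximation).} Apply the Barron approximation result for $\mathrm{ReCU}$ networks (as in \cite{lumathe2025SIAP,KluBarron2018IEEE}). For $u^*\in\mathcal{B}^6(\Omega)$ this yields $\varphi_0\in\mathcal{F}_m(c\|u^*\|_{\mathcal B^6})$ with
\begin{equation*}
\|u^*-\varphi_0\|_{H^3(\Omega)}\lesssim \|u^*\|_{\mathcal{B}^6}\,m^{-(\frac14+\frac1{6d})}.
\end{equation*}
Similarly, since $\Delta u^*\in\mathcal B^4(\Omega)$, there is $\varphi_1$ with $\|\Delta u^*-\varphi_1\|_{H^2}$ of the same order. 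The trace theorem then bounds the boundary terms, so $\inf_{\mathcal F_m}\mathcal L\lesssim \|u^*\|^2_{\mathcal B^6}m^{-(\frac12+\frac1{3d})}$.

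\textbf{Step 3 (generalization).} For the empirical minimizer we use
\begin{equation*}
\mathcal L(\boldsymbol u_{\theta^*})\le \inf\mathcal L+2\,\mathbb E\sup_{\mathcal F_m(B)^2}|\mathcal L-\mathcal L_n|.
\end{equation*}
Symmetrization reduces the supremum to Rademacher complexities of the composed classes $(\Delta\varphi_1-f)^2$, $(\Delta\varphi_0-\varphi_1)^2$, $(\varphi_0-g)^2$ and $(\partial_{\boldsymbol n}\varphi_0-h)^2$. All of these are uniformly bounded by $C(B+K)^2$, since $|\boldsymbol\omega|_1=1$ and $t\in[-1,1)$ imply that the derivatives up to order two of $\sigma_3(\boldsymbol\omega\cdot x+t)$ are bounded on $(0,1)^d$.

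By the contraction lemma and the bounded parameter set, the complexities are $O(n^{-1/2})$. The Lipschitz constants of $\sigma_3''=6\,\mathrm{ReLU}$ in $(\boldsymbol\omega,t)$ are fine; the loss of $\sqrt\cdot$ (giving $n^{-1/4}$) arises if one instead uses a covering/Dudley argument for the class of second derivatives. Combining Steps 1--3 gives the stated bound.
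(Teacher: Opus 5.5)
\textbf{Gap in Step 1.} The linear stability estimate $\|e_0\|_{H^1(\Omega)}^2+\|e_1\|_{L^2(\Omega)}^2+\|\Delta e_1\|_{L^2(\Omega)}^2\le C\,\mathcal{L}(\boldsymbol{u})$ is false for every fixed $\lambda$. Work on the unit disk; rescaling into $(0,1)^2$ only changes constants. Take $e_0=\varepsilon\big[(1+\tfrac k2)r^k-\tfrac k2 r^{k+2}\big]\cos k\theta$ and $e_1=\Delta e_0=-2\varepsilon k(k+1)r^k\cos k\theta$. Then $\Delta e_1=0$, $\Delta e_0-e_1=0$, $\partial_{\boldsymbol{n}}e_0=0$ and $e_0|_{\partial\Omega}=\varepsilon\cos k\theta$, so $\mathcal{L}=\lambda\pi\varepsilon^2$. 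On the other hand, $\|e_1\|_{L^2(\Omega)}^2=2\pi\varepsilon^2k^2(k+1)$ and $\|\nabla e_0\|_{L^2(\Omega)}^2\ge\pi k\varepsilon^2/2$, so both ratios blow up as $k\to\infty$. Since both sides are quadratic in $(e_0,e_1)$, no a priori bound on the networks can rescue a linear estimate.

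Your duality argument is correct for $\|e_0\|_{L^2(\Omega)}$, because $H^4$ regularity puts $\partial_{\boldsymbol{n}}\Delta z$ in $L^2(\partial\Omega)$. It breaks down in two places:
\begin{enumerate}[(i)]
\item For the $H^1$ norm the dual solution is only in $H^3$. Then $\partial_{\boldsymbol{n}}\Delta z\in H^{-1/2}(\partial\Omega)$ cannot be paired against $e_0|_{\partial\Omega}$, which the loss controls only in $L^2(\partial\Omega)$.
\item The Dirichlet duality for $e_1$ leaves the term $\int_{\partial\Omega}e_1\,\partial_{\boldsymbol{n}}z$, about which the loss gives no information.
\end{enumerate}

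\textbf{The missing idea.} It is the one you mention and then drop: use the uniform bound $\|(e_0,e_1)\|_{H^2(\Omega)\times H^2(\Omega)}\le E$ to control the boundary pairings nonlinearly. This bound is available since $\varphi_0,\varphi_1\in\mathcal{F}_m(B)$ and $u^*\in\mathcal{B}^6(\Omega)$. In Lemma~\ref{lem_coercivity} the paper proceeds as follows:
\begin{enumerate}[(i)]
\item It expands $\|\Delta e_0-e_1\|^2$ and moves the cross term by Green's formula to $(e_0,\Delta e_1)$ plus $(\partial_{\boldsymbol{n}}e_0,e_1)_{\partial\Omega}-(e_0,\partial_{\boldsymbol{n}}e_1)_{\partial\Omega}$.
\item It bounds the boundary terms by $E\big(\|e_0\|_{L^2(\partial\Omega)}+\|\partial_{\boldsymbol{n}}e_0\|_{L^2(\partial\Omega)}\big)\lesssim E\sqrt{\mathcal{L}}$ via the trace theorem.
\item Combined with an elliptic estimate for $\|e_0\|_{H^1}$, this yields only a sup-linear bound: squared error $\lesssim\mathcal{L}+E\sqrt{\mathcal{L}}$. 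That bound is then fed into the generalized C\'ea Lemma~\ref{lem_cea}.
\end{enumerate}
The example above shows the square root is unavoidable.

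\textbf{Consequence for the rates.} The square root is the actual source of both exponents in the theorem.
\begin{enumerate}[(i)]
\item $m^{-(\frac12+\frac1{3d})}$ is the square root of the squared $H^3$ error $m^{-(1+\frac2{3d})}$, which comes from the full ReCU rate of Lemma~\ref{lem_sigma2approximation}. It is not the halved rate $m^{-(\frac14+\frac1{6d})}$ you quote.
\item $n^{-\frac14}$ is, via Jensen, the square root of the $O(n^{-1/2})$ Rademacher bound on $\mathbb{E}\sup|\mathcal{L}-\mathcal{L}_n|$. It is not a loss from a Dudley/covering argument.
\end{enumerate}
Your Steps 2 and 3 otherwise follow the paper's ingredients. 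However, their exponents were fitted to make a stability estimate that does not hold reproduce the target rates.
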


The error bound in Theorem \ref{thm2} consists of two components: the first corresponds to the approximation error of the neural network, while the second represents the generalization error arising from the use of randomly sampled training data. Moreover, this error bound demonstrates that the convergence rates of the neural network solution to the biharmonic equation do not suffer from the curse of dimensionality, provided that the exact solution belongs to the spectral Barron space. A detailed proof is deferred to Section \ref{se4}.

\section{Solution existence and error decomposition}\label{se3}
In this section, we establish the existence of a solution to the biharmonic equation within the spectral Barron space, drawing inspiration from the well-posedness framework introduced in \cite{choulli2025functional}. Furthermore, we show how the error between the exact solution and the approximate solution can be controlled by the approximation error and the generalization error for the biharmonic problem.

\subsection{Well-posedness}\label{subse_Wellposedness}
Before proceeding with the proof of Theorem \ref{thm1}, we define, for $0\leq s<t$ and $\varrho>0$, the set $\mathcal{D}_\varrho^{s,t}$ as the closure of the set  
\begin{equation*}
	\mathcal{D}_\varrho^{t}
	:=
	\{(f,g,h)=(\Delta^2 u, u|_\Gamma,\partial_{\boldsymbol{n}} u|_\Gamma);\; u\in \mathcal{B}^{4+t}(\Omega),\; \|u\|_{\mathcal{B}^{4+t}(\Omega)}\leq \varrho\}
\end{equation*}
 in $\mathcal{B}^s(\Omega)\times\mathcal{B}^{s+4}(\Gamma)\times\mathcal{B}^{s+3}(\Gamma)$. Such a set allows for additional regularity of the source term and boundary conditions, which yields the following existence and uniqueness result.
\begin{lemma}\label{lem_existence}
 Let $0 \leq s < t$, $\varrho > 0$, and $(f,g,h) \in \mathcal{D}_\varrho^{s,t}$. Then there exists a unique solution $u \in \mathcal{B}^{4+s}(\Omega)$ of the boundary value problem for the biharmonic equation \eqref{eq_biharmonicequation} such that $\|u\|_{\mathcal{B}^{4+s}(\Omega)} \leq \varrho$.
	\end{lemma}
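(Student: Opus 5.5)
Since $(f,g,h)$ lies in the closure of $\mathcal{D}^t_\varrho$, I would pick $u_j\in\mathcal{B}^{4+t}(\Omega)$ with $\|u_j\|_{\mathcal{B}^{4+t}(\Omega)}\le\varrho$ and
\[
(\Delta^2u_j,\,u_j|_\Gamma,\,\partial_{\boldsymbol n}u_j|_\Gamma)\to(f,g,h)
\quad\text{in }\mathcal{B}^s(\Omega)\times\mathcal{B}^{s+4}(\Gamma)\times\mathcal{B}^{s+3}(\Gamma).
\]
The plan is to extract a limit $u$ of the $u_j$ in $\mathcal{B}^{4+s}(\Omega)$, show that it solves \eqref{eq_biharmonicequation} with the norm bound, and then prove uniqueness separately.

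\textbf{Compactness.} Choose extensions $\tilde u_j\in\mathcal{B}^{4+t}(\mathbb{R}^d)$ with $\|\tilde u_j\|_{\mathcal{B}^{4+t}(\mathbb{R}^d)}\le\varrho+1/j$. Multiply them by a fixed cutoff $\chi\in C_c^\infty$ with $\chi=1$ near $\bar\Omega$. Multiplication by a Schwartz function is bounded on $\mathcal{B}^{4+t}(\mathbb{R}^d)$ (Peetre's inequality applied to the convolution $\hat\chi*\hat{\tilde u}_j$), so the sequence stays bounded and is now compactly supported. I then want a Rellich-type statement: the embedding of $\mathcal{B}^{4+t}$ into $\mathcal{B}^{4+s}$ is compact on functions supported in a fixed compact set, since $s<t$. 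The tail $\int_{|\xi|>R}\langle\xi\rangle^{4+s}|\hat v|$ is at most $R^{s-t}\|v\|_{\mathcal{B}^{4+t}}$. On $|\xi|\le R$, the transforms $\hat v$ are equicontinuous and bounded because $v$ has fixed compact support and bounded $L^1$ norm, so Arzelà--Ascoli applies. This gives a subsequence converging to some $\tilde u$ in $\mathcal{B}^{4+s}(\mathbb{R}^d)$.

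\textbf{Norm bound.} I will instead pass to the limit in $\|\langle\xi\rangle^{4+s}\hat{\tilde u}_j\|_{L^1}$ without the cutoff. By Fatou's lemma along an a.e.\ (or distributional) limit of the $\hat{\tilde u}_j$, the limit has norm at most $\liminf_j\|\tilde u_j\|_{\mathcal{B}^{4+t}}\le\varrho$, using $\langle\xi\rangle^{4+s}\le\langle\xi\rangle^{4+t}$. To make this rigorous, I would use weak-$*$ compactness of the measures $\langle\xi\rangle^{4+t}\hat{\tilde u}_j\,d\xi$, which are bounded in total variation by $\varrho+1$. This yields a limit measure whose $\langle\xi\rangle^{s-t}$-weighted version has mass at most $\varrho$. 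The strong $\mathcal{B}^{4+s}$ limit on $\bar\Omega$ coincides with the inverse transform of this measure, so $\|u\|_{\mathcal{B}^{4+s}(\Omega)}\le\varrho$ with $u=\pi_{4+s}\tilde u$.

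\textbf{Identification of the limit.} The maps $\Delta^2:\mathcal{B}^{4+s}(\Omega)\to\mathcal{B}^s(\Omega)$, the trace into $\mathcal{B}^{s+4}(\Gamma)$ and the normal trace into $\mathcal{B}^{s+3}(\Gamma)$ are all continuous. The first holds since $|\xi|^4\le\langle\xi\rangle^4$. The trace bound is by definition of the quotient norm. For the normal trace, $\partial_{\boldsymbol n}u=\nabla u\cdot\boldsymbol n$, with $\boldsymbol n$ extended smoothly and multiplication again bounded. Hence $(\Delta^2u,u|_\Gamma,\partial_{\boldsymbol n}u)=(f,g,h)$.

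\textbf{Uniqueness.} The difference $w$ of two solutions lies in $\mathcal{B}^{4}(\Omega)\subset C^4(\bar\Omega)$. It satisfies $\Delta^2w=0$ with $w=\partial_{\boldsymbol n}w=0$ on $\Gamma$. Integrating by parts twice gives $\|\Delta w\|_{L^2}^2=0$, so $w$ is harmonic with zero trace, hence $w=0$.

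I expect the main obstacle to be the compactness and norm-preservation step. The Barron norm is an $L^1$ norm, so weak limits are only measures; the argument must show that the strong $\mathcal{B}^{4+s}$ limit coincides with the weak-$*$ limit, so that the bound $\varrho$ is not lost.
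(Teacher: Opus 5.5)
Your overall route is the paper's: approximate $(f,g,h)$ from $\mathcal{D}^t_\varrho$, use compactness of $\mathcal{B}^{4+t}(\Omega)\hookrightarrow\mathcal{B}^{4+s}(\Omega)$ to get a $\mathcal{B}^{4+s}(\Omega)$-limit, then identify that limit by continuity of $u\mapsto(\Delta^2u,\,u|_\Gamma,\,\partial_{\boldsymbol n}u|_\Gamma)$.

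The paper simply cites the compact embedding (Prop.~4.1 of the Choulli et al.\ reference). You reprove it with a cutoff, Peetre's inequality and Arzel\`a--Ascoli, and that argument is sound. Your uniqueness argument ($w\in C^4(\bar\Omega)$, $0=\int_\Omega w\,\Delta^2 w=\|\Delta w\|_{L^2(\Omega)}^2$, so $w$ is harmonic with zero trace) is also correct. It supplies something the paper's proof never addresses: its phrase ``uniqueness of the limit'' refers to limits of sequences, not to the boundary value problem.

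\textbf{The gap is in the norm bound.} The weak-$*$ limit $\mu$ of $\langle\xi\rangle^{4+t}\hat{\tilde u}_j\,d\xi$ is only a finite measure, and it can be singular. Nothing prevents the chosen extensions from having Fourier transforms that concentrate. For example, for $e^{ix\cdot\xi_0}\rho(x/j)$ with Schwartz $\rho$, the limit is a multiple of $\delta_{\xi_0}$. Its inverse transform is a plane wave, which lies in no $\mathcal{B}^r(\mathbb{R}^d)$.

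Now $\|\cdot\|_{\mathcal{B}^{4+s}(\Omega)}$ is an infimum over extensions $v$ with $\langle\xi\rangle^{4+s}\hat v\in L^1$. So the inverse transform of $\langle\xi\rangle^{-(4+t)}\mu$ is not an admissible competitor. Its weighted-mass bound therefore does not give $\|u\|_{\mathcal{B}^{4+s}(\Omega)}\le\varrho$ without a further approximation lemma, which you do not supply. Fatou is not available either: without the cutoff you have no a.e.\ convergence of $\hat{\tilde u}_j$.

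\textbf{The fix is short.} Since $\chi=1$ near $\bar\Omega$, you have $(\chi-1)\tilde u_j\in\mathcal{F}_{4+s}$, so $\pi_{4+s}(\chi\tilde u_j)=u_j$. Because $\pi_{4+s}$ is norm-nonincreasing, your compactness step already gives $u_j\to u:=\pi_{4+s}\tilde u$ in $\mathcal{B}^{4+s}(\Omega)$.

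Every $\mathcal{B}^{4+t}(\mathbb{R}^d)$-extension of $u_j$ is also a $\mathcal{B}^{4+s}(\mathbb{R}^d)$-extension, with no larger norm. Hence $\|u_j\|_{\mathcal{B}^{4+s}(\Omega)}\le\|u_j\|_{\mathcal{B}^{4+t}(\Omega)}\le\varrho$. Continuity of the norm then gives $\|u\|_{\mathcal{B}^{4+s}(\Omega)}=\lim_j\|u_j\|_{\mathcal{B}^{4+s}(\Omega)}\le\varrho$.

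The cutoff constant never enters, because the bound is taken in the quotient norm rather than on the representatives $\chi\tilde u_j$. This is exactly the paper's one-line \emph{Consequently}. The ``main obstacle'' you anticipated does not arise.
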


\begin{proof}
We first prove existence. Let  
 $$ \{(f_j, g_j, h_j)\} = \{(\Delta^2 u_j,\; u_j|_{\Gamma},\; \partial_{\boldsymbol{n}} u_j|_{\Gamma})\} $$  
 be a sequence in $\mathcal{D}_\varrho^{t}$ converging to $(f,g,h)$ in $\mathcal{B}^s(\Omega) \times \mathcal{B}^{s+4}(\Gamma) \times \mathcal{B}^{s+3}(\Gamma)$, where $\{u_j\}$ is a sequence in $\mathcal{B}^{4+t}(\Omega)$ satisfying $\sup_j \|u_j\|_{\mathcal{B}^{4+t}(\Omega)} \le \varrho$.  
Since the embedding $\mathcal{B}^{4+t}(\Omega) \hookrightarrow \mathcal{B}^{4+s}(\Omega)$ is compact by \cite[Prop. 4.1]{choulli2025functional}, after extracting a subsequence if necessary we may assume that $\{u_j\}$ converges to some $u \in \mathcal{B}^{4+s}(\Omega)$ in $\mathcal{B}^{4+s}(\Omega)$.  Consequently, $\|u\|_{\mathcal{B}^{4+s}(\Omega)} \le \varrho$ and  
$$ \{(\Delta^2 u_j,\; u_j|_{\Gamma},\; \partial_{\boldsymbol{n}} u_j|_{\Gamma})\} \to \{(\Delta^2 u,\; u|_{\Gamma},\; \partial_{\boldsymbol{n}} u|_{\Gamma})\} $$  
in $\mathcal{B}^s(\Omega) \times \mathcal{B}^{s+4}(\Gamma) \times \mathcal{B}^{s+3}(\Gamma)$. By uniqueness of the limit we obtain $$\{(\Delta^2 u,\; u|_{\Gamma},\; \partial_{\boldsymbol{n}} u|_{\Gamma})\} = (f,g,h). $$
	\end{proof}

We further establish the regularity of the solution, provided that the source term and the boundary condition are sufficiently smooth.

\begin{lemma}\label{lem_regularity}
	Let $s\geq 0$, and $k>s+d/2$ be integers. For all $(f,g,h)\in H^k(\Omega)\times H^{k+7/2}(\Gamma)\times H^{k+5/2}(\Gamma)$, the BVP \eqref{eq_biharmonicequation} admits a unique solution $u\in \mathcal{B}^{4+s}(\Omega)$, and the following stability estimate holds:
	\begin{equation*}
		\|u\|_{\mathcal{B}^{4+s}(\Omega)}
		\leq
		C(\|f\|_{H^k(\Omega)}+\|g\|_{ H^{k+7/2}(\Gamma)}+\|h\|_{H^{k+5/2}(\Gamma)}).
		\end{equation*}
	\end{lemma}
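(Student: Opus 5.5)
We follow the Sobolev route from \cite{choulli2025functional}: get $u$ in a high-order Sobolev space by classical elliptic regularity, then pass to the spectral Barron space through an extension operator and a Sobolev--Barron embedding.

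\textbf{Step 1 (Sobolev solution).} Since $\partial\Omega$ is smooth and the clamped (Dirichlet) conditions $u,\partial_{\boldsymbol{n}}u$ satisfy the Lopatinskii--Shapiro complementing condition for $\Delta^2$, the Agmon--Douglis--Nirenberg theory applies. Uniqueness in $H^2(\Omega)$ follows from the energy identity $\int_\Omega|\Delta w|^2=0$ for the homogeneous problem: it gives $\Delta w=0$ with $w=\partial_{\boldsymbol{n}}w=0$ on $\Gamma$, hence $w=0$. For data $(f,g,h)\in H^k(\Omega)\times H^{k+7/2}(\Gamma)\times H^{k+5/2}(\Gamma)$ we then get a unique solution $u\in H^{k+4}(\Omega)$ with
\begin{equation*}
\|u\|_{H^{k+4}(\Omega)}\le C\big(\|f\|_{H^k(\Omega)}+\|g\|_{H^{k+7/2}(\Gamma)}+\|h\|_{H^{k+5/2}(\Gamma)}\big).
\end{equation*}
The trace indices match: the traces of $H^{k+4}$ functions lie in $H^{k+7/2}$, and their normal derivatives lie in $H^{k+5/2}$.

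\textbf{Step 2 (extension).} Take a bounded Stein extension operator $E:H^{k+4}(\Omega)\to H^{k+4}(\mathbb{R}^d)$. Multiply by a cutoff $\chi\in C_c^\infty$ with $\chi=1$ on $\bar\Omega$; this keeps the bound and makes the support compact.

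\textbf{Step 3 (Sobolev--Barron embedding).} Show $H^{k+4}(\mathbb{R}^d)\hookrightarrow\mathcal{B}^{4+s}(\mathbb{R}^d)$ when $k>s+d/2$. By Cauchy--Schwarz,
\begin{equation*}
\int\langle\boldsymbol{\xi}\rangle^{4+s}|\hat v|\,d\boldsymbol{\xi}\le\Big(\int\langle\boldsymbol{\xi}\rangle^{-2(k-s)}d\boldsymbol{\xi}\Big)^{1/2}\|\langle\boldsymbol{\xi}\rangle^{k+4}\hat v\|_{L^2},
\end{equation*}
and the first factor is finite exactly when $2(k-s)>d$. Since $\hat v\in L^1$, $v$ is bounded and continuous, so $v\in C_b^0$. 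Now $\pi_{4+s}(\chi Eu)$ restricts to $u$ on $\bar\Omega$, and the quotient norm \eqref{eq_boundedbarronnorm} gives $\|u\|_{\mathcal{B}^{4+s}(\Omega)}\le\|\chi Eu\|_{\mathcal{B}^{4+s}(\mathbb{R}^d)}$. Combining with Steps 1--2 gives the stability estimate.

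\textbf{Step 4 (uniqueness in $\mathcal{B}^{4+s}(\Omega)$).} This is the expected obstacle. A Barron solution only lies in $C^4$-type spaces, and we must check it coincides with the Sobolev one. Any $w\in\mathcal{B}^{4+s}(\Omega)$ has a representative $W\in\mathcal{B}^{4+s}(\mathbb{R}^d)\subset C^4_b(\mathbb{R}^d)$, since $\langle\boldsymbol{\xi}\rangle^{4}\hat W\in L^1$. Hence $w\in C^4(\bar\Omega)\subset H^2(\Omega)$. If $w$ solves the homogeneous problem, the $H^2$ uniqueness from Step 1 gives $w=0$. One must verify carefully that the boundary traces in the quotient space agree with the classical ones; this holds because $\pi_s(g_1)=\pi_s(g_2)$ means equality on $\bar\Omega$.
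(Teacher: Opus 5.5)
Your proof is correct. Steps 1--3 match the paper's argument: elliptic regularity for the clamped problem (the paper cites \cite[Thm 2.16]{gazzola2010polyharmonic}), a bounded extension $H^{k+4}(\Omega)\to H^{k+4}(\mathbb{R}^d)$, and a Sobolev--Barron embedding. The difference is in how you close the argument.

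The paper picks $t$ with $s<t<k-d/2$ and cites \cite[Prop. 4.1]{choulli2025functional} for $H^{4+k}(\mathbb{R}^d)\hookrightarrow\mathcal{B}^{4+t}(\mathbb{R}^d)$. It then invokes Lemma~\ref{lem_existence}, whose proof rests on the compactness of $\mathcal{B}^{4+t}(\Omega)\hookrightarrow\mathcal{B}^{4+s}(\Omega)$. You instead prove the embedding directly into $\mathcal{B}^{4+s}(\mathbb{R}^d)$ by Cauchy--Schwarz, under exactly the hypothesis $k>s+d/2$, and read off the bound from the quotient norm by restriction. This shows that the detour through $t$ and compactness is not needed. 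Once the Sobolev solution has an extension with controlled $\mathcal{B}^{4+t}$ norm, it already lies in $\mathcal{B}^{4+s}(\Omega)$ with the same bound, because $\langle\boldsymbol{\xi}\rangle^{4+s}\le\langle\boldsymbol{\xi}\rangle^{4+t}$. In effect, the solution produced by Lemma~\ref{lem_existence} there is just $u$ itself.

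Your Step 4 also supplies something the paper asserts but does not prove. Lemma~\ref{lem_existence} claims uniqueness, but its proof only constructs a solution. Your argument is what actually shows the Barron solution is unique and coincides with the Sobolev one: $\mathcal{B}^{4+s}(\Omega)\subset C^4(\bar\Omega)\subset H^2(\Omega)$, followed by the energy identity on $H^2_0(\Omega)$.

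One minor point: the cutoff $\chi$ in Step 2 is harmless but unnecessary, since $\mathcal{B}^{4+s}(\mathbb{R}^d)$ imposes no support condition.
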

\begin{proof}
	According to the standard elliptic estimates, we deduce from \cite[Thm 2.16]{gazzola2010polyharmonic} that $u\in H^{4+k}(\Omega)$ and
	\begin{equation*}
		\|u\|_{H^{4+k}(\Omega)}
		\leq
		C(\|f\|_{H^k(\Omega)}+\|g\|_{ H^{k+7/2}(\Gamma)}+\|h\|_{H^{k+5/2}(\Gamma)}).
		\end{equation*}

On the other hand, from \cite{grisvard2011elliptic}, there exists $v\in H^{4+k}(\mathbb{R}^d)$ such that $v|_\Omega=u$ and
\begin{equation*}
	\|v\|_{H^{4+k}(\mathbb{R}^d)}
	\leq
	C	\|u\|_{H^{4+k}(\Omega)}.
	\end{equation*}
Hence we have
\begin{equation*}
	\|v\|_{H^{4+k}(\mathbb{R}^d)}
	\leq
		C(\|f\|_{H^k(\Omega)}+\|g\|_{ H^{k+7/2}(\Gamma)}+\|h\|_{H^{k+5/2}(\Gamma)}).
	\end{equation*}
Take $s<t<k- d/2$. From \cite[Prop. 4.1]{choulli2025functional}, we know that $H^{4+k}(\mathbb{R}^d)\hookrightarrow \mathcal{B}^{4+t}(\mathbb{R}^d)$. Thus $v\in \mathcal{B}^{4+t}(\mathbb{R}^d)$, it yields that
\begin{equation*}
	\|v\|_{\mathcal{B}^{4+t}(\mathbb{R}^d)}
	\leq
		C(\|f\|_{H^k(\Omega)}+\|g\|_{ H^{k+7/2}(\Gamma)}+\|h\|_{H^{k+5/2}(\Gamma)})=:\varrho.
	\end{equation*}
The proof is then completed by applying Lemma \ref{lem_existence}.
	\end{proof}

 \begin{proof}[Proof of Theorem \ref{thm1}]
 By combining Lemma \ref{lem_existence} and Lemma \ref{lem_regularity}, we directly obtain the argument for the theorem.
    \end{proof}

\subsection{Error Decomposition}
To derive the error bound for MIM, the central idea of error decomposition is to follow the general approach from \cite{zeinhofer2025unified}, where one applies Céa's lemma after establishing the boundedness and coercivity of the bilinear form induced by \eqref{eq_Lossfunctional}.
  
For simplicity, we first introduce an abstract framework. Let $X$ and $Y$ be two Hilbert spaces, and let a linear map
\begin{equation*}
	T: X \rightarrow Y,\quad \boldsymbol{u} \mapsto T\boldsymbol{u},
\end{equation*}
be given. Define the corresponding bilinear form for any $\boldsymbol{u}, \boldsymbol{v} \in X$:
\begin{equation}\label{eq_generalbilinear}
    \mathfrak{B}(\boldsymbol{u},\boldsymbol{v}) = (T\boldsymbol{u}, T\boldsymbol{v})_Y.
\end{equation}
Furthermore, let $\mathcal{F}_\Theta$ denote the collection of all neural network functions with a given parameter space $\Theta$. Building upon C\'ea's lemma from \cite{zeinhofer2025unified}, we obtain the following extended version:
\begin{lemma}\cite[Generalized C\'ea's Lemma]{zeinhofer2025unified}\label{lem_cea}
For any function $\boldsymbol{f} \in Y$, define the loss function $\mathcal{L}(\boldsymbol{u}) = \|T\boldsymbol{u} - \boldsymbol{f}\|_Y^2$, and let $\boldsymbol{u}^*$ denote its unique minimizer.  
Assume that the bilinear form $\mathfrak{B}$ defined in \eqref{eq_generalbilinear} is bounded, i.e.,
\[
\mathfrak{B}(\boldsymbol{u},\boldsymbol{v}) \precsim \|\boldsymbol{u}\|_X \|\boldsymbol{v}\|_X.
\]
Moreover, assume that there exists a Hilbert space $Z$ with $X \subseteq Z$ such that $\mathfrak{B}$ is sup-linearly coercive with respect to $Z$, i.e., $\mathfrak{B}(\boldsymbol{u},\boldsymbol{u}) \ge a \|\boldsymbol{u}\|_Z^4$. Then the following estimate holds:
\begin{align}\label{eq_totalerror}
\|\boldsymbol{u}_{\theta} - \boldsymbol{u}^*\|_Z^2 \precsim \sqrt{\delta(\boldsymbol{u}_\theta)} + \sqrt{\inf_{\hat{\boldsymbol{u}}_{\theta} \in \mathcal{F}_\Theta} \|\hat{\boldsymbol{u}}_{\theta} - \boldsymbol{u}^*\|_X^2},
\end{align}
where $\delta(\boldsymbol{u}_{\theta}) = \mathcal{L}(\boldsymbol{u}_{\theta}) - \inf_{\hat{\boldsymbol{u}}_{\theta} \in \mathcal{F}_\Theta} \mathcal{L}(\hat{\boldsymbol{u}}_{\theta}).$
\end{lemma}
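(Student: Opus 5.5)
Since $\boldsymbol{u}^*$ is the unique minimizer of the quadratic functional $\mathcal{L}(\boldsymbol{u}) = \|T\boldsymbol{u} - \boldsymbol{f}\|_Y^2$, I will treat it as an exact solution, $T\boldsymbol{u}^* = \boldsymbol{f}$, so that $\mathcal{L}(\boldsymbol{u}^*)=0$. (This is the situation of interest: $\boldsymbol{u}^*=(u^*,\Delta u^*)$ with data $(f,0,g,h)$. Otherwise the normal equations $T^*(T\boldsymbol{u}^*-\boldsymbol{f})=0$ give the same identity below.) The starting point is the identity, valid for every $\boldsymbol{v}\in X$,
\begin{equation*}
\mathcal{L}(\boldsymbol{v}) - \mathcal{L}(\boldsymbol{u}^*) = \|T(\boldsymbol{v}-\boldsymbol{u}^*)\|_Y^2 = \mathfrak{B}(\boldsymbol{v}-\boldsymbol{u}^*,\boldsymbol{v}-\boldsymbol{u}^*).
\end{equation*}
It follows by expanding the square and using that the cross term $2(T\boldsymbol{v}-T\boldsymbol{u}^*, T\boldsymbol{u}^*-\boldsymbol{f})_Y$ vanishes.

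Next, apply the identity to the trained network $\boldsymbol{v}=\boldsymbol{u}_\theta$ and use sup-linear coercivity:
\begin{equation*}
a\|\boldsymbol{u}_\theta - \boldsymbol{u}^*\|_Z^4 \le \mathfrak{B}(\boldsymbol{u}_\theta-\boldsymbol{u}^*,\boldsymbol{u}_\theta-\boldsymbol{u}^*) = \mathcal{L}(\boldsymbol{u}_\theta)-\mathcal{L}(\boldsymbol{u}^*).
\end{equation*}
Insert $\inf_{\hat{\boldsymbol{u}}_\theta\in\mathcal{F}_\Theta}\mathcal{L}(\hat{\boldsymbol{u}}_\theta)$ to split the right-hand side as
\begin{equation*}
\delta(\boldsymbol{u}_\theta) + \Bigl(\inf_{\hat{\boldsymbol{u}}_\theta\in\mathcal{F}_\Theta}\mathcal{L}(\hat{\boldsymbol{u}}_\theta)-\mathcal{L}(\boldsymbol{u}^*)\Bigr).
\end{equation*}
For the second term, use the identity again for each $\hat{\boldsymbol{u}}_\theta\in\mathcal{F}_\Theta$, which gives $\mathcal{L}(\hat{\boldsymbol{u}}_\theta)-\mathcal{L}(\boldsymbol{u}^*)=\mathfrak{B}(\hat{\boldsymbol{u}}_\theta-\boldsymbol{u}^*,\hat{\boldsymbol{u}}_\theta-\boldsymbol{u}^*)$. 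Boundedness of $\mathfrak{B}$ bounds this by a constant times $\|\hat{\boldsymbol{u}}_\theta-\boldsymbol{u}^*\|_X^2$. Taking the infimum over $\hat{\boldsymbol{u}}_\theta$ yields
\begin{equation*}
\|\boldsymbol{u}_\theta-\boldsymbol{u}^*\|_Z^4 \precsim \delta(\boldsymbol{u}_\theta) + \inf_{\hat{\boldsymbol{u}}_\theta\in\mathcal{F}_\Theta}\|\hat{\boldsymbol{u}}_\theta-\boldsymbol{u}^*\|_X^2 .
\end{equation*}

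Finally, take square roots and use $\sqrt{x+y}\le\sqrt{x}+\sqrt{y}$ for $x,y\ge 0$; this is exactly \eqref{eq_totalerror}. Both summands are nonnegative because $\boldsymbol{u}_\theta\in\mathcal{F}_\Theta$, so the root is legitimate.

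The only delicate point is the first step. I must make sure the minimizer satisfies the orthogonality $(T\boldsymbol{w},T\boldsymbol{u}^*-\boldsymbol{f})_Y=0$ for all $\boldsymbol{w}\in X$, which is the first-order optimality condition of a quadratic functional on the linear space $X$. I must also make sure $\mathcal{F}_\Theta\subset X$, so that boundedness applies to the differences $\hat{\boldsymbol{u}}_\theta-\boldsymbol{u}^*$. Both are standard, and everything after them is mechanical.
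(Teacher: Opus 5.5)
Your proof is correct and follows the standard argument behind the cited result; the paper gives no proof of its own and only refers to \cite{zeinhofer2025unified}. As there, the optimality condition $(T\boldsymbol{w},T\boldsymbol{u}^*-\boldsymbol{f})_Y=0$ yields $\mathcal{L}(\boldsymbol{v})-\mathcal{L}(\boldsymbol{u}^*)=\mathfrak{B}(\boldsymbol{v}-\boldsymbol{u}^*,\boldsymbol{v}-\boldsymbol{u}^*)$, coercivity is applied to $\boldsymbol{u}_\theta-\boldsymbol{u}^*$, boundedness controls the best-approximation term, and the only change from the linearly coercive case is the final square root. One caveat: since $\mathfrak{B}(t\boldsymbol{u},t\boldsymbol{u})=t^2\mathfrak{B}(\boldsymbol{u},\boldsymbol{u})$, sup-linear coercivity can only hold on a bounded set, as in Lemma~\ref{lem_coercivity} where $\|\boldsymbol{u}\|_{H^2(\Omega)}\le E$; when applying it you should therefore check that $\boldsymbol{u}_\theta-\boldsymbol{u}^*$ lies in that set, which follows from the coefficient bound on $\mathcal{F}_m(B)$ and the fixed $\boldsymbol{u}^*$.
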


In the above total error bound (\ref{eq_totalerror}), the second term $\sqrt{\inf_{\hat{\boldsymbol{u}}_{\theta} \in \mathcal{F}_\Theta} \|\hat{\boldsymbol{u}}_{\theta} - \boldsymbol{u}^*\|_X^2}$ is referred to as the approximation error. We further decompose the first term $\sqrt{\delta(\boldsymbol{u}_\theta)}$ below. 

\begin{remark}\label{rem_decompose}
Let $\mathcal{L}_n(\boldsymbol{u})$ be the empirical loss function corresponding to $\mathcal{L}(\boldsymbol{u})$. We further decompose the error term $\delta(\boldsymbol{u}_\theta)$ as follows:
    \begin{equation*}
    \begin{aligned}
       \delta(\boldsymbol{u}_\theta)
       =&
       \mathcal{L}(\boldsymbol{u}_{\theta})
       -
       \mathcal{L}_n(\boldsymbol{u}_{\theta})
       +
      \mathcal{L}_n(\boldsymbol{u}_{\theta})
       -
       \inf_{\hat{\boldsymbol{u}}_{\theta}\in \mathcal{F}_\Theta}
       \mathcal{L}_n(\hat{\boldsymbol{u}}_{\theta})
       +
       \inf_{\hat{\boldsymbol{u}}_{\theta}\in \mathcal{F}_\Theta}
       \mathcal{L}_n(\hat{\boldsymbol{u}}_{\theta})
       -
       \inf_{\hat{\boldsymbol{u}}_{\theta}\in \mathcal{F}_\Theta}\mathcal{L}(\hat{\boldsymbol{u}}_{\theta})\\
       \lesssim&
       \underbrace{\sup_{\hat{\boldsymbol{u}}_{\theta}\in \mathcal{F}_\Theta}
       |\mathcal{L}(\hat{\boldsymbol{u}}_{\theta})
       -
       \mathcal{L}_n(\hat{\boldsymbol{u}}_{\theta})|}_{\mbox{generalization error}}
       +
        \underbrace{\big(\mathcal{L}_n(\boldsymbol{u}_{\theta})
       -
       \inf_{\hat{\boldsymbol{u}}_{\theta}\in \mathcal{F}_\Theta}
       \mathcal{L}_n(\hat{\boldsymbol{u}}_{\theta})\big)}_{\mbox{optimization error}}.
    \end{aligned}
    \end{equation*}
In the above inequality, the first term is called the "global generalization error", and the second term is the "optimization error". Combining this with Lemma~\ref{lem_cea}, the total error bound is of the form "global generalization error + optimization error + approximation error".
\end{remark}

\subsection{Coercivity and boundedness}
To apply the Céa's lemma, we verify boundedness and coercivity of the bilinear form $\mathfrak{B}(\cdot,\cdot)$. To this end, we consider two functions \(\boldsymbol{u}, \boldsymbol{w}\in H^2(\Omega)\times H^2(\Omega)\) defined by
\begin{equation}\label{form of U and V}
	\begin{aligned}
		\boldsymbol{u} =(\varphi_0,\,\varphi_{1}),\
		\boldsymbol{w} =(\psi_0,\,\psi_{1}),
	\end{aligned}
\end{equation}
where \(\varphi_i,\,\psi_i \in \mathbb{R}\) for \(i = 0,\,1\). Moreover, we define the following bilinear form, induced from \eqref{eq_Lossfunctional}:
\begin{equation}\label{bilinear form}
    \begin{aligned}
        \mathfrak{B}(\boldsymbol{u},\boldsymbol{w})
        =&
        (\Delta \varphi_{1}, \Delta\, \psi_{1})
        +
        (\Delta \varphi_0 -\varphi_{1}, \Delta \psi_0 -\psi_{1})
        \\
   & + 
    \lambda\big((\varphi_0, \psi_0)_{\partial\Omega}   
    +
    (\partial_{\boldsymbol{n}}\varphi_0,  \partial_{\boldsymbol{n}}\psi_0)_{\partial\Omega} \big),
    \end{aligned}
\end{equation}
where \((\cdot,\cdot)_{\partial\Omega}\) denotes the inner product defined on \(\partial\Omega\). The boundness and coercivity results of the bilinear form $\mathfrak{B}(\cdot,\cdot)$ are established below. 

\begin{lemma}\label{lem_coercivity}
For any functions \(\boldsymbol{u},\,\boldsymbol{w}\in H^2(\Omega)\times H^2(\Omega)\) as defined in \eqref{form of U and V}, the bilinear form \eqref{bilinear form} is bounded; i.e.,
\begin{equation*}
	\begin{aligned}
		\mathfrak{B}(\boldsymbol{u},\boldsymbol{w})
		\le 
		C_B\|\boldsymbol{u}\|_{H^2(\Omega)}
		\|\boldsymbol{w}\|_{H^2(\Omega)}.
	\end{aligned}
\end{equation*}
Moreover, if \(\| \boldsymbol{u} \|_{H^2(\Omega)}\le E\) holds true, then the following sup-linear coercivity holds:
\begin{equation*}
	\begin{aligned}
		\sqrt{\mathfrak{B}(\boldsymbol{u},\boldsymbol{u})}
		\geq
		C_E
		\Big( \|\varphi_0\|_{H^1(\Omega)}^2 
		+ \|\Delta \varphi_0\|_{L^2(\Omega)}^2 
		+
		\|\varphi_1\|_{L^2(\Omega)}^2 + \|\Delta\varphi_1\|_{L^2(\Omega)}^2\Big).
	\end{aligned}
\end{equation*}
Here, \(C_B\) and \(C_E\) are positive constants.
\end{lemma}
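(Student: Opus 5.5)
Boundedness comes first and is routine. Each of the four terms in \eqref{bilinear form} is handled by Cauchy--Schwarz. For the interior terms, $|(\Delta\varphi_1,\Delta\psi_1)|\le \|\varphi_1\|_{H^2}\|\psi_1\|_{H^2}$ and $|(\Delta\varphi_0-\varphi_1,\Delta\psi_0-\psi_1)|\le (\|\varphi_0\|_{H^2}+\|\varphi_1\|_{L^2})(\|\psi_0\|_{H^2}+\|\psi_1\|_{L^2})$. For the boundary terms we use the trace theorem: $\|\varphi_0\|_{L^2(\partial\Omega)}\le C\|\varphi_0\|_{H^1(\Omega)}$ and $\|\partial_{\boldsymbol n}\varphi_0\|_{L^2(\partial\Omega)}\le C\|\nabla\varphi_0\|_{H^1(\Omega)}\le C\|\varphi_0\|_{H^2(\Omega)}$. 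Summing gives $C_B$, which depends on $\lambda$ and $\Omega$.

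For coercivity, set $Q:=\mathfrak{B}(\boldsymbol u,\boldsymbol u)=\|\Delta\varphi_1\|^2+\|\Delta\varphi_0-\varphi_1\|^2+\lambda(\|\varphi_0\|_{\partial\Omega}^2+\|\partial_{\boldsymbol n}\varphi_0\|_{\partial\Omega}^2)$. The natural route is a chain of elliptic estimates with weak boundary norms. Write $v:=\Delta\varphi_0$. Then $\|\varphi_1\|\le \|v\|+\|\Delta\varphi_0-\varphi_1\|$, so $\varphi_1$ and $\Delta\varphi_1$ are controlled once $\|v\|_{L^2}$ is. To bound $\|v\|_{L^2}$ we test against the solution of an auxiliary problem. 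Let $z$ solve $\Delta^2 z=v$ with $z=\partial_{\boldsymbol n}z=0$ on $\partial\Omega$, so that $\|z\|_{H^4}\le C\|v\|$. Green's formula then gives
\begin{equation*}
\|v\|^2=(v,\Delta^2 z)=(\Delta v, z)+\int_{\partial\Omega}\bigl(v\,\partial_{\boldsymbol n}\Delta z-\partial_{\boldsymbol n}v\,\Delta z\bigr).
\end{equation*}
Because $v$ and $\varphi_1$ may be only $L^2$ or $H^2$, it is cleaner to test instead with $\varphi_0$ directly: $(\Delta\varphi_0,\Delta z)=(\varphi_0,\Delta^2 z)+\text{boundary terms involving }\varphi_0,\partial_{\boldsymbol n}\varphi_0$ against $\partial_{\boldsymbol n}\Delta z,\Delta z$. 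The boundary terms are bounded by $\|z\|_{H^4}$ times $\|\varphi_0\|_{L^2(\partial\Omega)}+\|\partial_{\boldsymbol n}\varphi_0\|_{L^2(\partial\Omega)}$, using traces of $H^2$ and $H^1$ functions respectively.

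Combining this with the interior identity $\Delta\varphi_0=\varphi_1+r$, where $\|r\|\le\sqrt Q$, and $\Delta\varphi_1=q$ with $\|q\|\le\sqrt Q$, yields a bound of the form $\|\varphi_0\|_{L^2}+\|\Delta\varphi_0\|_{L^2}+\|\varphi_1\|_{L^2}\le C\sqrt{Q}$. One way to organise this is a duality argument for the second-order problems: for $w$ with $\Delta w\in L^2$, very weak elliptic theory gives $\|w\|_{L^2}\le C(\|\Delta w\|_{H^{-2}}+\|w\|_{H^{-1/2}(\partial\Omega)})$. A similar bound for the clamped biharmonic operator in $H^{-2}$-type norms controls $\varphi_0$ by $\|\Delta^2\varphi_0\|_{H^{-4}}$ and its boundary data. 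Here $\Delta^2\varphi_0=\Delta\varphi_1+\Delta r$, and $\|\Delta r\|_{H^{-2}}\le\|r\|$. This gives the linear bound on $\|\varphi_0\|_{L^2}$ and, through the equations, on $\|\Delta\varphi_0\|$, $\|\varphi_1\|$ and $\|\Delta\varphi_1\|$.

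The main obstacle is the $H^1$ norm of $\varphi_0$. With only $L^2(\partial\Omega)$ control of the traces we cannot expect a linear bound on $\|\varphi_0\|_{H^1}$, and this is exactly where the sup-linear form and the a priori bound $E$ enter. The plan is to interpolate: $\|\varphi_0\|_{H^1}^2\le C\|\varphi_0\|_{L^2}\|\varphi_0\|_{H^2}\le CE\|\varphi_0\|_{L^2}\le CE\sqrt{Q}$. Similarly, each squared quantity is bounded by $C\sqrt Q\cdot(\text{linear bound})$, using $\|\cdot\|^2\le E\|\cdot\|$ together with the linear estimates above. This gives $\text{RHS}\le C(E)\sqrt{Q}$, which is the claimed inequality with $C_E=1/C(E)$. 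If the duality step for $\|\varphi_0\|_{L^2}$ turns out delicate because of the low-regularity boundary traces, the fallback is to absorb boundary contributions using trace interpolation $\|\varphi_0\|_{H^{1/2}(\partial\Omega)}^2\lesssim\|\varphi_0\|_{L^2(\partial\Omega)}\|\varphi_0\|_{H^{1}(\partial\Omega)}\le CE\|\varphi_0\|_{L^2(\partial\Omega)}$. The standard $H^2$ and $H^1$ elliptic estimates then apply, again producing $\sqrt Q$ times a constant depending on $E$.
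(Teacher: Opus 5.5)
Your boundedness argument is fine, and two pieces of your coercivity argument are sound. The first is the duality bound $\|\varphi_0\|_{L^2(\Omega)}\le C\sqrt{Q}$: testing with the solution of $\Delta^2 z=w$, $z=\partial_{\boldsymbol n}z=0$ on $\partial\Omega$, gives $(\varphi_0,w)=(\Delta\varphi_1,z)+(\Delta\varphi_0-\varphi_1,\Delta z)+(\varphi_0,\partial_{\boldsymbol n}\Delta z)_{\partial\Omega}-(\partial_{\boldsymbol n}\varphi_0,\Delta z)_{\partial\Omega}$, and each term is at most $C\sqrt{Q}\,\|z\|_{H^4(\Omega)}$. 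The second is the interpolation step $\|\varphi_0\|_{H^1}^2\le CE\sqrt{Q}$.

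The gap is your claimed \emph{linear} bound $\|\Delta\varphi_0\|_{L^2(\Omega)}+\|\varphi_1\|_{L^2(\Omega)}\le C\sqrt{Q}$, which is false. Nothing ``through the equations'' yields it: $\Delta\varphi_1=q$ gives no control of $\varphi_1$ without boundary data for $\varphi_1$, and $Q$ contains none. It even contradicts your own correct remark about $H^1$. Via $\|\nabla\varphi_0\|^2=-(\Delta\varphi_0,\varphi_0)+(\partial_{\boldsymbol n}\varphi_0,\varphi_0)_{\partial\Omega}$, it would give $\|\varphi_0\|_{H^1}^2\le CQ$.

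For a concrete counterexample, work on the unit disk (any disk after rescaling) in polar coordinates. Take $\varphi_0=(1-\rho^2)\rho^k\cos k\theta$ and $\varphi_1=\Delta\varphi_0=-4(k+1)\rho^k\cos k\theta$. Then $\Delta\varphi_1=0$ and $\Delta\varphi_0-\varphi_1=0$, while $\varphi_0=0$ and $\partial_{\boldsymbol n}\varphi_0=-2\cos k\theta$ on $\partial\Omega$. So $Q=4\pi\lambda$ for every $k$, whereas $\|\Delta\varphi_0\|_{L^2}^2=\|\varphi_1\|_{L^2}^2=8\pi(k+1)\to\infty$. (Here $\|\boldsymbol u\|_{H^2}\sim k^{5/2}$, so the lemma itself is not contradicted.) Squaring a nonexistent linear estimate against $E$ therefore proves nothing for these two terms.

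Your fallback does not rescue them either. The $H^2$ estimate for the clamped problem needs the traces in $H^{3/2}(\partial\Omega)\times H^{1/2}(\partial\Omega)$. Those are exactly the norms that $E$ bounds, so there is no stronger norm to interpolate against. Interpolating $\varphi_0|_{\partial\Omega}$ in $H^{1/2}(\partial\Omega)$ only yields $H^1$-level information, not $\Delta\varphi_0\in L^2$.

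The missing idea is the one the paper uses: apply Green's formula to the cross term of $\|\Delta\varphi_0-\varphi_1\|^2$. This gives
\begin{equation*}
\|\Delta\varphi_0\|^2+\|\varphi_1\|^2=\|\Delta\varphi_0-\varphi_1\|^2+2(\varphi_0,\Delta\varphi_1)+2(\partial_{\boldsymbol n}\varphi_0,\varphi_1)_{\partial\Omega}-2(\varphi_0,\partial_{\boldsymbol n}\varphi_1)_{\partial\Omega}.
\end{equation*}
Each boundary product pairs a trace of $\varphi_0$ that is small (at most $\sqrt{Q/\lambda}$ in $L^2(\partial\Omega)$) with a trace of $\varphi_1$ that is merely bounded (at most $CE$ by the trace theorem). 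Combined with $2|(\varphi_0,\Delta\varphi_1)|\le 2E\sqrt{Q}$ and $Q\le C_BE^2$, the right-hand side is at most $C(E)\sqrt{Q}$. This is the genuine source of sup-linearity for the top-order terms.

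With this identity inserted, the rest of your plan goes through. Your duality-plus-interpolation control of $\|\varphi_0\|_{H^1}$ is then a legitimate alternative to the paper's Neumann-type estimate $\|\varphi_0\|_{H^1}^2\lesssim\|\Delta\varphi_0\|^2+\|\partial_{\boldsymbol n}\varphi_0\|_{L^2(\partial\Omega)}^2$. As written, that estimate also needs $\|\varphi_0\|_{L^2(\partial\Omega)}^2$ on the right to exclude constants. Your route avoids that issue, at the price of invoking $H^4$-regularity of the clamped biharmonic problem.
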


\begin{proof}
By repeatedly applying the Cauchy–Schwarz and trace inequalities, we obtain
\begin{equation*}
\begin{aligned}
&|\mathfrak{B}(\boldsymbol{u},\boldsymbol{w})|
\;\le\;
\|\Delta \varphi_1\|_{L^2(\Omega)} 
\|\Delta \psi_1\|_{L^2(\Omega)}
\\[4pt]
&+
2\Big(
\|\Delta \varphi_0\|_{L^2(\Omega)}^2 
+ \|\varphi_1\|_{L^2(\Omega)}^2
\Big)^{1/2}
\Big(
\|\Delta \psi_0\|_{L^2(\Omega)}^2 
+ \|\psi_1\|_{L^2(\Omega)}^2
\Big)^{1/2}
\\[4pt]
&+
\lambda
\Big(
\|\varphi_1\|_{L^2(\partial\Omega)} 
\|\psi_1\|_{L^2(\partial\Omega)}
+
\|\partial_{\boldsymbol{n}}\varphi_1\|_{L^2(\partial\Omega)}
\|\partial_{\boldsymbol{n}}\psi_1\|_{L^2(\partial\Omega)}
\Big)
\\[6pt]
\le\;&
\Big(
\|\Delta \varphi_1\|_{L^2(\Omega)}^2
+
2(\|\Delta \varphi_0\|_{L^2(\Omega)}^2 
+\|\varphi_1\|_{L^2(\Omega)}^2)
+
\lambda^2(
\|\varphi_1\|_{L^2(\partial\Omega)}^2
+
\|\partial_{\boldsymbol{n}}\varphi_1\|_{L^2(\partial\Omega)}^2)
\Big)^{1/2}
\\
&\times
\Big(
\|\Delta \psi_1\|_{L^2(\Omega)}^2
+
2(\|\Delta \psi_0\|_{L^2(\Omega)}^2 
+\|\psi_1\|_{L^2(\Omega)}^2)
+
\lambda^2(
\|\psi_1\|_{L^2(\partial\Omega)}^2
+
\|\partial_{\boldsymbol{n}}\psi_1\|_{L^2(\partial\Omega)}^2)
\Big)^{1/2}
\\[6pt]
\le\;&
C_B(\|\varphi_1\|_{H^2(\Omega)}^2
+
\|\varphi_0\|_{H^2(\Omega)}^2)^{1/2}
(\|\psi_1\|_{H^2(\Omega)}^2
+
\|\psi_0\|_{H^2(\Omega)}^2)^{1/2},
\end{aligned}
\end{equation*}
which yields the boundedness.

For the sup-linear coercivity, applying Green's formula yields
\begin{equation*}
    (\Delta \varphi_{0}, \varphi_{1})
    =
    \big(\varphi_{0}, \Delta \varphi_{1}\big)	
		+
		\big(\partial_{\boldsymbol{n}} \varphi_{0}, \varphi_{1} \big)_{\partial\Omega}
		-
		\big( \varphi_{0},\partial_{\boldsymbol{n}} \varphi_{1} \big)_{\partial\Omega}.
\end{equation*}
Hence, for \(0 < \delta < 1\), it follows that
\begin{equation*}
    \begin{aligned}
        \mathfrak{B}(\boldsymbol{u},\boldsymbol{u})
        \geq\,&
        \|\Delta \varphi_1\|_{L^2(\Omega)}^2
        +
        \delta \|\Delta \varphi_0 - \varphi_1\|_{L^2(\Omega)}^2
        +
        \lambda\big(\|\varphi_0\|_{L^2(\partial\Omega)}^2
        +
        \|\partial_{\boldsymbol{n}}\varphi_0\|_{L^2(\partial\Omega)}^2\big)\\
        =\,&
        \|\Delta \varphi_1\|_{L^2(\Omega)}^2 + 
        \delta \big(\|\Delta \varphi_0\|_{L^2(\Omega)}^2 + \|\varphi_1\|_{L^2(\Omega)}^2 \big)
         + 2\delta( \varphi_{0}, \Delta \varphi_{1}) + \mathcal{T}_{\mathrm{b}}, 
    \end{aligned}
\end{equation*}
where 
\begin{equation*}
    \begin{aligned}
        \mathcal{T}_{\mathrm{b}}
        :=
        \lambda\big(\|\varphi_0\|_{L^2(\partial\Omega)}^2
        +
    \|\partial_{\boldsymbol{n}}\varphi_0\|_{L^2(\partial\Omega)}^2\big)
    +
        \delta\big(\partial_{\boldsymbol{n}} \varphi_{0}, \varphi_{1} \big)_{\partial\Omega}
		-
		\delta\big( \varphi_{0},\partial_{\boldsymbol{n}} \varphi_{1} \big)_{\partial\Omega}.
    \end{aligned}
\end{equation*}
For the cross term, we know
\begin{equation*}
2\delta( \varphi_{0}, \Delta\varphi_{1}) 
=
\frac{1}{2}\|\Delta\varphi_{1} + 2\delta\varphi_{0}\|_{L^2(\Omega)}^2
- 
\frac{1}{2}\|\Delta\varphi_{1}\|_{L^2(\Omega)}^2
-
2\delta^2 \|\varphi_{0}\|_{L^2(\Omega)}^2. 
\end{equation*}
Hence we have
\begin{equation*}
    \begin{aligned}
        \mathfrak{B}(\boldsymbol{u},\boldsymbol{u})
        \geq\,
        \frac{1}{2}\|\Delta\varphi_{1}\|_{L^2(\Omega)}^2 
        +
        \delta \big(\|\Delta \varphi_0\|_{L^2(\Omega)}^2 + \|\varphi_1\|_{L^2(\Omega)}^2 \big) 
        - 
        2\delta^2 \|\varphi_{0}\|_{L^2(\Omega)}^2
        +
        \mathcal{T}_{\mathrm{b}}.
    \end{aligned}
\end{equation*}
In addition, applying H\"older's inequality yields
    \begin{equation*}
    \begin{aligned}
        \big(\partial_{\boldsymbol{n}} \varphi_{0}, \varphi_{1} \big)_{\partial\Omega}
		-&
		\big( \varphi_{0},\partial_{\boldsymbol{n}} \varphi_{1} \big)_{\partial\Omega}\\
        \geq\, &
        -\|\partial_{\boldsymbol{n}}\varphi_{0}\|_{L^2(\partial\Omega)}
        \|\varphi_{1}\|_{L^2(\partial\Omega)}
        -
        \|\varphi_{0}\|_{L^2(\partial\Omega)}
        \|\partial_{\boldsymbol{n}}\varphi_{1}\|_{L^2(\partial\Omega)}\\
        \geq\, &
        -E (\|\partial_{\boldsymbol{n}}\varphi_{0}\|_{L^2(\partial\Omega)}+  \|\varphi_{0}\|_{L^2(\partial\Omega)})
        \geq\, -E \sqrt{\mathfrak{B}(\boldsymbol{u},\boldsymbol{u})} .
        \end{aligned}
    \end{equation*}
Here, we use the assumption $\|\boldsymbol{u}\|_{H^2(\Omega)}\leq\, E$.
Therefore, we conclude that
\begin{equation*}
\begin{aligned}
    \mathfrak{B}(\boldsymbol{u},\boldsymbol{u}) + E \sqrt{\mathfrak{B}(\boldsymbol{u},\boldsymbol{u})}
    \geq\,&
    \frac{1}{2}\|\Delta\varphi_{1}\|_{L^2(\Omega)}^2 
        +
        \delta \big(\|\Delta \varphi_0\|_{L^2(\Omega)}^2 + \|\varphi_1\|_{L^2(\Omega)}^2 \big) \\
        &- 
        2\delta^2 \|\varphi_{0}\|_{L^2(\Omega)}^2
        +
        \lambda\big(\|\varphi_0\|_{L^2(\partial\Omega)}^2
        +
        \|\partial_{\boldsymbol{n}}\varphi_0\|_{L^2(\partial\Omega)}^2\big).
        \end{aligned}
\end{equation*}
Moreover, we apply the standard elliptic estimate for the Neumann boundary condition, see \cite[Prop. 2.10 ]{ern2004theory}, to derive
\begin{equation*}
	\|\varphi_0\|_{H^1(\Omega)}^2
	\leq
	C(\|\Delta \varphi_0\|_{L^2(\Omega)}^2
 +
 \|\partial_{\boldsymbol{n}} \varphi_{0}\|_{L^{2}(\partial\Omega)}^2
 ),
\end{equation*}
which yields that
\begin{equation*}
\begin{aligned}
\mathfrak{B}(\boldsymbol{u},\boldsymbol{u}) + E \sqrt{\mathfrak{B}(\boldsymbol{u},\boldsymbol{u})}
    \geq\,&
    \frac{1}{2}\|\Delta\varphi_{1}\|_{L^2(\Omega)}^2 
        +
        \frac{\delta}{2} \big(\|\Delta \varphi_0\|_{L^2(\Omega)}^2 + \|\varphi_1\|_{L^2(\Omega)}^2 \big)\\
        &+
        \Big(\frac{\delta}{2C} - 2\delta^2 \Big)
        \|\varphi_0\|_{H^1(\Omega)}^2
        +
        \Big(\lambda-\frac{\delta}{2}\Big)\|\partial_{\boldsymbol{n}} \varphi_{0}\|_{L^2(\Omega)}^2.
        \end{aligned}
        \end{equation*}
Hence, by choosing \(0 < \delta \leq \min\{1/(8C), \lambda\}\), we verify the sup-linear coercivity of \(\mathfrak{B}(\boldsymbol{u},\boldsymbol{u})\).
\end{proof}

In our case, let \(X = H^2(\Omega)\times H^2(\Omega)\) and define the linear map \(T\) as
\begin{equation*}
    T = 
        (\mathcal{P},\, S),
\end{equation*}
where the matrix operator \(\mathcal{P}\) and the trace operator \(S\) are given by
\begin{equation}
    \mathcal{P}
	: =
	\begin{pmatrix}
		\Delta &                -I & \\
		0      &\Delta& 
	\end{pmatrix},
    \quad
    S = (I, \partial_{\boldsymbol{n}}).
\end{equation}
Moreover, we set
\begin{equation}\label{Def: Y space}
    Y= 
        L^2(\Omega)\oplus L^2(\partial\Omega), 
\end{equation}
so that \(T: X \to Y\). The linear map \(T\) then induces the bilinear form \(\mathfrak{B}\) defined in \eqref{bilinear form}, and the corresponding loss function \(\mathcal{L}\) is given by \eqref{eq_Lossfunctional}.  
Furthermore, define \(Z = \big(H^1(\Omega)\times H(\mathrm{div},\Omega)\big)\times \mathcal{H}(\Omega)\) with \(\mathcal{H}(\Omega):=\{u\in L^2(\Omega): \Delta u\in L^2(\Omega)\}\). Then Lemma \ref{lem_coercivity} implies that \(\mathfrak{B}\) is sup-linear coercive with respect to \(Z\).

Now we can apply the Céa's Lemma \ref{lem_cea} to the neural network approximation using the spaces and the bilinear form defined above. Suppose \(u^*\in \mathcal{B}^{6}(\Omega)\) is the solution to the biharmonic problem \eqref{eq_biharmonicequation}. By introducing the notation  
\[
\boldsymbol{u}^* = \big( u^*, \Delta u^*  \big),
\]  
it follows that \(\boldsymbol{u}^*\in H^2(\Omega)\times H^2(\Omega)\).  

Moreover, let  
\[
\boldsymbol{u}_{\theta} = (\varphi_0,\,\varphi_{1})
\]  
be any neural network in the product space \(\mathcal{F}_{\Theta} := \mathcal{F}_{m}(B)\times \mathcal{F}_{m}(B)\) with \(B = c\|u^*\|_{\mathcal{B}^6(\Omega)}\). 
For brevity, we define the approximation error between \(\boldsymbol{u}^*\) and \(\boldsymbol{u}_{\theta}\) and the generalization error between \(\mathcal{L}\) and \(\mathcal{L}_n\) as follows:  
\[
\left\{
\begin{aligned}
\mathcal{E}_{\rm app}
&= \inf_{\varphi_0,\varphi_1\in \mathcal{F}_{m}(B)} 
\sum_{k=0,1}\|\varphi_k-\Delta^{k}u^*\|_{H^2(\Omega)}^2, \\
\mathcal{E}_{\rm gen}
&= \sup_{\varphi_0,\varphi_1\in \mathcal{F}_{m}(B)} 
|\mathcal{L}(\boldsymbol{u}_\theta) - \mathcal{L}_n(\boldsymbol{u}_\theta)|,
\end{aligned}
\right.
\]  
where the empirical loss function \(\mathcal{L}_n\) is given by \eqref{eq_EmpiricalLoss}.  

Consequently, Lemma \ref{lem_coercivity} together with the Céa's Lemma \ref{lem_cea} and Remark \ref{rem_decompose} yield the following result.

\begin{lemma}\label{lem_generalerror}
Use the notation defined above and denote
    $$\boldsymbol{u}_{\theta^*}
=
(\varphi_0^*,\,  \varphi_1^*,)
=
\argmin_{\varphi_0, \varphi_1\in\mathcal{F}_m(B)}
\mathcal{L}_n(\varphi_0,\varphi_1),$$
    with $B = c\|u^*\|_{\mathcal{B}^6(\Omega)}$. Then it follows that
    \begin{equation*}
    \begin{aligned}
    \|\varphi_0^* - u^*\|_{H^1(\Omega)}^2 + &
			\|\varphi_1^* - \Delta u^*\|_{L^2(\Omega)}^2
    +
   \|\Delta(\varphi_0^* -  u^*) \|_{L^2(\Omega)}^2 
			 \\
            +&\, \|\Delta\varphi_1^* - \Delta^2 u^*\|_{L^2(\Omega)}^2
        \lesssim 
        \sqrt{\mathcal{E}_{\rm app}}
        +
         \sqrt{\mathcal{E}_{\rm gen}}.
        \end{aligned}
    \end{equation*}
\end{lemma}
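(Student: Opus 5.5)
The plan is to apply the generalized C\'ea Lemma \ref{lem_cea} to the difference $\boldsymbol{u}_{\theta^*}-\boldsymbol{u}^*$. The spaces are $X=H^2(\Omega)\times H^2(\Omega)$ and $Y$ from \eqref{Def: Y space}, with the linear map $T=(\mathcal{P},S)$. The data are $\boldsymbol{f}=(f,0,g,h)$.

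\emph{Step 1: $\boldsymbol{u}^*$ minimizes $\mathcal{L}$.} Since $\Delta^2u^*=f$, $u^*|_{\partial\Omega}=g$ and $\partial_{\boldsymbol n}u^*=h$, we have $T\boldsymbol{u}^*=\boldsymbol{f}$. Hence $\mathcal{L}(\boldsymbol{u}^*)=0$, so $\boldsymbol{u}^*$ is a minimizer. Uniqueness follows from coercivity: if $\mathfrak{B}(\boldsymbol{v},\boldsymbol{v})=0$, Lemma \ref{lem_coercivity} forces $\boldsymbol{v}=0$. Moreover, $\mathcal{L}(\boldsymbol{u})=\mathfrak{B}(\boldsymbol{u}-\boldsymbol{u}^*,\boldsymbol{u}-\boldsymbol{u}^*)$, which is what allows C\'ea's argument to run.

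\emph{Step 2: coercivity and boundedness.} Boundedness of $\mathfrak{B}$ on $X$ is Lemma \ref{lem_coercivity}. For sup-linear coercivity we apply the second part of Lemma \ref{lem_coercivity} to $\boldsymbol{v}=\boldsymbol{u}_{\theta^*}-\boldsymbol{u}^*$. This requires a uniform bound $\|\boldsymbol{v}\|_{H^2}\le E$, and I expect this to be the main (if mild) obstacle. I would obtain it in two pieces.
\begin{itemize}
\item For $\boldsymbol{u}^*$: use the embedding $\mathcal{B}^{s+2}(\Omega)\hookrightarrow H^{2}$-type bounds on the bounded domain $\Omega$, which give $\|u^*\|_{H^2}+\|\Delta u^*\|_{H^2}\lesssim\|u^*\|_{\mathcal{B}^6(\Omega)}$.
\item For the network: any $\varphi\in\mathcal{F}_m(B)$ satisfies $\|\varphi\|_{W^{2,\infty}((0,1)^d)}\lesssim B$. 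Indeed, $|\boldsymbol{\omega}_i|_1=1$ and $|t_i|\le1$ bound $\sigma_3$ and its first two derivatives on $[-2,2]$, and $\sum_i|\gamma_i|\le B$. Hence $\|\varphi\|_{H^2(\Omega)}\lesssim B=c\|u^*\|_{\mathcal{B}^6}$.
\end{itemize}
Together these give $E\lesssim\|u^*\|_{\mathcal{B}^6(\Omega)}$, independent of $n$ and $m$. Lemma \ref{lem_coercivity} then gives $\mathfrak{B}(\boldsymbol{v},\boldsymbol{v})\ge C_E^2\|\boldsymbol{v}\|_Z^4$, where $\|\boldsymbol{v}\|_Z^2$ is the left-hand side of the claimed estimate. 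The C\'ea lemma only needs coercivity on the set of such differences, so the restricted form suffices.

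\emph{Step 3: assemble the error terms.} The lemma yields
\[
\|\boldsymbol{v}\|_Z^2\lesssim\sqrt{\delta(\boldsymbol{u}_{\theta^*})}+\sqrt{\mathcal{E}_{\rm app}},
\]
since the infimum of $\|\hat{\boldsymbol u}_\theta-\boldsymbol u^*\|_X^2$ over $\mathcal{F}_\Theta$ is exactly $\mathcal{E}_{\rm app}$. Next we split $\delta$ as in Remark \ref{rem_decompose}. Because $\boldsymbol{u}_{\theta^*}$ is an exact minimizer of $\mathcal{L}_n$ over $\mathcal{F}_\Theta$, the optimization error vanishes. The other two differences are each bounded by $\sup_{\mathcal{F}_\Theta}|\mathcal{L}-\mathcal{L}_n|=\mathcal{E}_{\rm gen}$. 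For the third term, pick a near-minimizer $\hat{\boldsymbol u}$ of $\mathcal{L}$ and use $\inf\mathcal{L}_n\le\mathcal{L}_n(\hat{\boldsymbol u})\le\mathcal{L}(\hat{\boldsymbol u})+\mathcal{E}_{\rm gen}$. This gives $\delta\le2\mathcal{E}_{\rm gen}$.

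Combining, and absorbing the $E$-dependent constants into $\lesssim$, yields the stated bound.
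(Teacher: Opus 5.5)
Your proposal is correct and follows the paper's route. The paper proves this lemma in one line by combining the boundedness and sup-linear coercivity of Lemma \ref{lem_coercivity} with the generalized C\'ea Lemma \ref{lem_cea} and the splitting of Remark \ref{rem_decompose}, where the optimization error vanishes because $\boldsymbol{u}_{\theta^*}$ is an exact empirical minimizer. The details you add are exactly what the paper leaves implicit: the uniform $H^2$ bound $E\lesssim\|u^*\|_{\mathcal{B}^6(\Omega)}$ on $\boldsymbol{u}_{\theta^*}-\boldsymbol{u}^*$, the observation that coercivity is needed only at that difference, and the explicit bound $\delta(\boldsymbol{u}_{\theta^*})\le 2\mathcal{E}_{\rm gen}$.
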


From Lemma \ref{lem_generalerror} above, it follows that the error between the neural network approximation and the true solution can be bounded by the approximation error and the generalization error. Consequently, our task in what follows is to estimate these two errors.

\section{Error bounds estimation}\label{se4}
In this section, we estimate the error bounds of both the approximation error and the generalization error.

\subsection{Bounds of the approximation error}
The approximation properties of neural networks using the ReLU and ReQU activation functions have been thoroughly discussed in \cite{xurefined}. Here, we further extend these approximation results to the ReCU activation function, since the estimate of the generalization error requires the neural network to be three times differentiable with respect to the biharmonic equation.
\begin{lemma}\label{lem_sigma2approximation}
For any \( f \in \mathcal{B}^{4}(\Omega) \), there exists a constant \( c > 0 \) and a shallow neural network \( f_m \in \mathcal{F}_{m}\bigl(c\|f\|_{\mathcal{B}^{4}(\Omega)}\bigr) \) such that
\[
\|f - f_m\|_{H^{3}(\Omega)} \leq c\|f\|_{\mathcal{B}^{4}(\Omega)} \, m^{-\bigl(\frac{1}{2} + \frac{1}{3d}\bigr)},
\]
where the function class \( \mathcal{F}_m \) is defined in \eqref{eq_networkfunctionclass}.
\end{lemma}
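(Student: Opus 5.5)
The plan follows the Siegel--Xu style argument used for ReLU and ReQU networks in \cite{xurefined}, shifted up to the ReCU activation and the $H^3$ norm. The work splits into three stages.

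\textbf{Stage 1: integral representation.} Let $f_e\in\mathcal{B}^4(\mathbb{R}^d)$ be an extension of $f$ with $\|f_e\|_{\mathcal{B}^4(\mathbb{R}^d)}\le 2\|f\|_{\mathcal{B}^4(\Omega)}$. Since $\Omega\subset(0,1)^d$, we have $|\boldsymbol{x}|_\infty\le 1$ on $\Omega$. For $|z|\le 1$, Taylor's formula with integral remainder gives
\[
e^{iz}=1+iz-\frac{z^2}{2}-\frac{iz^3}{6}+\frac{1}{6}\int_{-1}^{1}(iz)^4 e^{it}\,\sigma_3(z-t)\,dt
\]
(up to sign conventions).

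Take $z=\boldsymbol{\xi}\cdot\boldsymbol{x}/|\boldsymbol{\xi}|_1$ and write $\boldsymbol{\omega}=\boldsymbol{\xi}/|\boldsymbol{\xi}|_1$. Integrating against $\hat f_e$ then represents $f$ on $\Omega$ as a cubic polynomial plus
\[
\int_{S\times[-1,1]} \sigma_3(\boldsymbol{\omega}\cdot\boldsymbol{x}-t)\,d\mu(\boldsymbol{\omega},t),
\]
where $\mu$ has total variation at most $C\int|\boldsymbol{\xi}|_1^4|\hat f_e|\,d\boldsymbol{\xi}\lesssim \|f\|_{\mathcal{B}^4}$.

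The cubic polynomial part is handled similarly. Either it is written exactly via finitely many $\sigma_3$ terms with $t_i\in[-1,1)$ (cubics in $\boldsymbol{\omega}\cdot\boldsymbol{x}$ are combinations of $\sigma_3(\boldsymbol{\omega}\cdot \boldsymbol{x}+t)$ for a few values of $t$), or, more cleanly, it is absorbed into the same dictionary. Its coefficients are bounded by $\|f\|_{\mathcal{B}^3}\le\|f\|_{\mathcal{B}^4}$. Hence $f$ lies in $c\|f\|_{\mathcal{B}^4}$ times the closed symmetric convex hull of the dictionary
\[
\mathbb{D}=\{\sigma_3(\boldsymbol{\omega}\cdot\boldsymbol{x}+t): |\boldsymbol{\omega}|_1=1,\ t\in[-1,1)\},
\]
with the hull taken in $H^3(\Omega)$.

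\textbf{Stage 2: Maurey bound.} Maurey's sampling argument already gives the rate $m^{-1/2}$ with $\ell^1$-norm of the coefficients bounded by $c\|f\|_{\mathcal{B}^4}$. This holds because $\sup_{g\in\mathbb{D}}\|g\|_{H^3(\Omega)}\le C$: the third derivatives of $\sigma_3$ are bounded on $\Omega$.

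\textbf{Stage 3: the extra $m^{-1/(3d)}$ (main obstacle).} The improved exponent requires the refined estimate of \cite{xurefined}. There, the dictionary is shown to have small metric entropy in the relevant norm, and a stratified-sampling argument is then applied. I would follow their scheme. Partition the parameter set $S^{d-1}_{\ell^1}\times[-1,1]$ into $N\approx m$ cells of diameter $\epsilon\approx m^{-1/d}$, and sample one atom per cell, with probabilities proportional to the mass of $\mu$ in the cell.

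The variance then involves $\|g_{\theta}-g_{\theta'}\|_{H^3(\Omega)}$ for parameters $\theta,\theta'$ in the same cell. Third derivatives of $\sigma_3$ reduce to the Heaviside function $\sigma_0$. The $L^2(\Omega)$ norm of the difference of two Heavisides with parameters at distance $\epsilon$ is $O(\epsilon^{1/2})$, since it is the measure of a thin slab. Lower-order derivatives are Lipschitz, contributing $O(\epsilon)$. So within a cell the $H^3$-diameter is $\lesssim\epsilon^{1/2}$.

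The stratified estimate then yields
\[
\|f-f_m\|_{H^3}\lesssim \|f\|_{\mathcal{B}^4}\, m^{-1/2}\,\epsilon^{1/2}=\|f\|_{\mathcal{B}^4}\, m^{-1/2-1/(2d)}.
\]
This rate is even slightly better than the one claimed. Following \cite{xurefined}'s bookkeeping, where the $d$-dimensional covering of the parameter set including $t$ costs roughly $\epsilon^{-(d+1)}$ cells, one may only obtain $m^{-1/2-1/(3d)}$, which suffices for the statement.

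Finally, the coefficient bound $\sum|\gamma_i|\le c\|f\|_{\mathcal{B}^4}$ is preserved because each cell's atom carries the cell's mass. Sign changes are absorbed by allowing $\pm$ coefficients, and the normalization $|\boldsymbol{\omega}_i|_1=1$ is built into the parametrization. I expect the delicate point to be the careful $H^3$ slab estimate near $\partial\Omega$ combined with the handling of the polynomial part inside $\mathcal{F}_m(B)$ with $t_i\in[-1,1)$.
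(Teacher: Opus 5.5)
Your plan follows the paper's route. A third-order Taylor remainder places $f$ in $c\|f\|_{\mathcal{B}^4(\Omega)}$ times the $H^3$-closed symmetric convex hull of the ReCU dictionary. The $H^3$ distance between two atoms is controlled by the parameter distance plus the $L^2$ distance of two Heaviside functions, which is the bound the paper proves in its metric-entropy lemma. A Makovoz-type stratified sampling then converts the metric entropy into the factor beyond $m^{-1/2}$.

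You differ in two ways. First, you merge the paper's two convex-hull steps (Fourier $\to$ cosines $\to$ ReCU) into a single measure $\mu$. Second, your explicit slab bound is a genuine gain. The set where the two Heavisides disagree lies in $\{|\boldsymbol{\omega}_1\cdot\boldsymbol{x}+t_1|\le|\boldsymbol{\omega}_1-\boldsymbol{\omega}_2|_1+|t_1-t_2|\}$. Its measure inside $(0,1)^d$ is $O(d\,\epsilon)$: apply Fubini in a coordinate with $|\omega_{1,i}|\ge 1/d$. So there is no boundary subtlety, and you get entropy $m^{-1/(2d)}$ and rate $m^{-1/2-1/(2d)}$, which implies the stated $m^{-1/2-1/(3d)}$.

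Your closing remark about $\epsilon^{-(d+1)}$ cells is therefore unnecessary, and it is also wrong. The parameter set is $d$-dimensional, and $\epsilon^{-(d+1)}$ cells would give exponent $1/(2(d+1))$, which is below $1/(3d)$ when $d=1$. The polynomial part is harmless: $(z+t)^3=\sigma_3(z+t)-\sigma_3(-z-t)$ with $t\in(-1,1)$, and four distinct such $t$ span all cubics in $z$.

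The step that fails as written is the sampling in Stage 3. Suppose you draw one atom $\theta_j$ per cell $C_j$ from the normalized restriction of $\mu$ and give it weight $\mu_j$, the mass of $\mu$ in $C_j$. Let $\bar g_j$ be the $\mu$-average of $g_\theta$ over $C_j$. The expected squared error is then $\sum_j\mu_j^2\,\mathbb{E}\|g_{\theta_j}-\bar g_j\|_{H^3(\Omega)}^2\lesssim\epsilon\sum_j\mu_j^2$. Here $\sum_j\mu_j^2$ can be as large as $M^2$, with $M$ the total variation of $\mu$ (for instance, all mass in one cell), so no factor $m^{-1/2}$ appears.

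Makovoz's theorem, which the paper cites, repairs this. Allocate $n_j=\lceil m\mu_j/M\rceil$ independent samples to $C_j$, each with weight $\mu_j/n_j$. This uses at most $m+N\approx 2m$ neurons and keeps $\sum_i|\gamma_i|\le M$. The expected squared error is then at most $\epsilon\sum_j\mu_j^2/n_j\le\epsilon M^2/m$. The scheme must be applied to $\mu^+$ and $\mu^-$ separately, because two atoms of opposite sign in the same cell are not close in $H^3$.

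Finally, your Taylor formula is garbled. You need $e^{iaz}$ with $a=|\boldsymbol{\xi}|_1$, and the remainder weight is $(ia)^4e^{iat}$, not $(iz)^4e^{it}$. Also, $\frac16\int_{-1}^1(ia)^4e^{iat}\sigma_3(z-t)\,dt$ is the remainder for the expansion about $z=-1$, with polynomial part in powers of $z+1$. If you expand about $z=0$, you need the paper's split into $\sigma_3(z-s)$ and $\sigma_3(-z-s)$ with $s\in[0,1]$. With these repairs your argument proves the lemma.
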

For brevity, the proof of this lemma is provided in Appendix \ref{se_appendix}.

\subsection{Estimation of the Rademacher Complexity}
We will illustrate that the generalization error for a model trained on a finite dataset $\{X_i\}_{i=1}^{n}$ can be estimated using the Rademacher complexity.  
\begin{definition}
	Let $X=\{X_i\}_{i=1}^{n}$ be a set of random variables in $\Omega$ that is independently distributed, and $\sigma = \{\sigma_i\}_{i=1}^{n}$ be independent Rademacher random variables that take values 
	$+1$ or $-1$ with equal probability. Then the \textbf{empirical Rademacher Complexity} of the function class $\mathcal{S}$ is a random variable given by
	\begin{equation*}
		\hat{R}_{n}(\mathcal{S}):=
		\mathbb{E}_\sigma\Big[\sup_{f\in \mathcal{S}}\Big|\frac{1}{n}\sum_{i=1}^n\sigma_i f(X_i)\Big|\Big].
	\end{equation*}
	Taking its expectation in terms of $X$ yields the \textbf{Rademacher Complexity} of the function class $\mathcal{S}$
	\begin{equation*}
		R_{n}(\mathcal{S})
		=
		\mathbb{E}_X\mathbb{E}_\sigma\Big[\sup_{f\in \mathcal{S}}\Big|\frac{1}{n}\sum_{i=1}^{n}\sigma_i f(X_i)\Big|\Big].
	\end{equation*}
\end{definition}
Rademacher complexity is a fundamental tool in bounding the generalization error. In this work, we will employ several key lemmas and theorems associated with it in \cite{li2024priori,ledoux2013probability}. 

The following lemma is obtained by combining \cite[Lem.~26.11]{shalev2014understanding} and \cite[Lem.~4.4]{li2024priori}.
\begin{lemma} \label{Lem: linear transformation function class}
   Let $\mathcal{G}$ be the linear transformation function class defined by 
   \begin{equation}\label{linear transformation function class}
       \mathcal{G}
       :=
       \{\boldsymbol{\omega}\cdot\boldsymbol{x} + t
       :
       |\boldsymbol{\omega}|_1=1,\,|t|\leq 1
       \}.
   \end{equation}
   Then there holds
   \begin{equation*}
       R_n(\mathcal{G})
       \leq
        \frac{\sqrt{ 2\log(2d) }+1}{\sqrt{n}}.
   \end{equation*}
\end{lemma}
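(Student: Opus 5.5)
Since $|t|\le 1$ enters additively, I will split the supremum and bound the two pieces separately. For each fixed sample $X=\{X_i\}_{i=1}^n\subset\Omega\subset(0,1)^d$ and Rademacher vector $\sigma$,
\[
\sup_{g\in\mathcal{G}}\Big|\frac1n\sum_{i=1}^n\sigma_i g(X_i)\Big|
\le \sup_{|\boldsymbol{\omega}|_1=1}\Big|\frac1n\sum_{i=1}^n\sigma_i\,\boldsymbol{\omega}\cdot X_i\Big|
+\sup_{|t|\le1}\Big|\frac{t}{n}\sum_{i=1}^n\sigma_i\Big|.
\]
Taking $\mathbb{E}_\sigma$ and then $\mathbb{E}_X$ gives $R_n(\mathcal{G})$ bounded by the sum of the Rademacher complexities of the linear class and the constant class.

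The bias part is elementary. We have $\sup_{|t|\le1}|t\,n^{-1}\sum_i\sigma_i| = n^{-1}|\sum_i\sigma_i|$, and by Jensen's inequality
\[
\mathbb{E}_\sigma\Big|\sum_i\sigma_i\Big|\le \Big(\mathbb{E}_\sigma\Big(\sum_i\sigma_i\Big)^2\Big)^{1/2}=\sqrt n .
\]
This contributes exactly $1/\sqrt n$, which is the "$+1$" in the claimed bound.

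For the linear part I will use the $\ell^1/\ell^\infty$ duality, which is the content of \cite[Lem.~26.11]{shalev2014understanding}:
\[
\sup_{|\boldsymbol{\omega}|_1=1}\Big|\boldsymbol{\omega}\cdot\frac1n\sum_i\sigma_iX_i\Big| = \Big\|\frac1n\sum_i\sigma_iX_i\Big\|_\infty = \max_{1\le j\le d}\max_{s\in\{\pm1\}} \frac1n\sum_i\sigma_i\,s\,X_{i,j}.
\]
This is a maximum over the $2d$ vectors $a^{(j,s)}=(s X_{1,j},\dots,sX_{n,j})\in\mathbb{R}^n$. Each satisfies $\|a^{(j,s)}\|_2\le \sqrt n\,\max_i|X_i|_\infty\le\sqrt n$ because $\Omega\subset(0,1)^d$. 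Massart's finite class lemma then gives the bound $\sqrt n\cdot\sqrt{2\log(2d)}/n=\sqrt{2\log(2d)}/\sqrt n$ for every fixed $X$. This is the same bound as \cite[Lem.~4.4]{li2024priori}.

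The only real subtlety is that the definition here carries an absolute value. Including both signs $s=\pm1$ handles this, and it is why the count is $2d$ rather than $d$. Taking $\mathbb{E}_X$ preserves the deterministic bound, and summing the two pieces yields $R_n(\mathcal{G})\le(\sqrt{2\log(2d)}+1)/\sqrt n$.
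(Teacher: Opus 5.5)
Your proof is correct and is essentially the argument the paper intends. The paper gives no proof beyond citing \cite[Lem.~26.11]{shalev2014understanding}, the bound for $\ell^1$-constrained linear predictors, which is proved by exactly your duality-plus-Massart argument over $2d$ vectors, together with the sum and constant-class properties from \cite[Lem.~4.4]{li2024priori}, which is your splitting off of the bias term to get the extra $1/\sqrt{n}$. Your handling of the absolute value through the signs $s=\pm1$ (equivalently, the symmetry of $\mathcal{G}$ under $(\boldsymbol{\omega},t)\mapsto(-\boldsymbol{\omega},-t)$), and of $|\boldsymbol{\omega}|_1=1$ versus $|\boldsymbol{\omega}|_1\le 1$, is sound.
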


With the associated property of Rademacher complexity, we are ready to estimate the complexity of the neural network function class \( \mathcal{F}_m \) defined in \eqref{eq_networkfunctionclass} below. 

\begin{lemma}
    The Rademacher complexity of $\mathcal{F}_m(B)$ is bounded by
    \begin{equation*}
  R_{n}(\mathcal{F}_{m}(B)) 
  \leq
  \frac{24B(\sqrt{ 2\log(2d) }+1)}{\sqrt{n}}.
    \end{equation*}
\end{lemma}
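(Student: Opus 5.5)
The plan is to reduce the Rademacher complexity of $\mathcal{F}_m(B)$ to that of the affine class $\mathcal{G}$ in Lemma \ref{Lem: linear transformation function class}. This uses two standard tools: convex-hull invariance and the Ledoux--Talagrand contraction principle.

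\textbf{Step 1 (convex hull).} Every $f\in\mathcal{F}_m(B)$ can be written as $f=\sum_i\gamma_i\sigma_3(g_i(\boldsymbol{x}))$ with $g_i\in\mathcal{G}$ and $\sum_i|\gamma_i|\le B$. Hence $\mathcal{F}_m(B)\subset B\cdot\overline{\mathrm{conv}}\big(\mathcal{H}\cup(-\mathcal{H})\big)$, where $\mathcal{H}:=\{\sigma_3\circ g:\ g\in\mathcal{G}\}$. For fixed $\sigma$ and $X$, the supremum of the linear functional $f\mapsto\frac1n\sum_i\sigma_if(X_i)$ over this symmetric convex hull is attained at an extreme point. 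Therefore
\[
\hat R_n(\mathcal{F}_m(B))\le B\,\mathbb{E}_\sigma\sup_{h\in\mathcal{H}}\Big|\frac1n\sum_i\sigma_ih(X_i)\Big| = B\,\hat R_n(\mathcal{H}).
\]

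\textbf{Step 2 (contraction).} For $\boldsymbol{x}\in\Omega\subset(0,1)^d$, $|\boldsymbol{\omega}|_1=1$ and $|t|\le1$, we have $|\boldsymbol{\omega}\cdot\boldsymbol{x}+t|\le 2$. So only the restriction of $\sigma_3$ to $[-2,2]$ matters. On that interval $\sigma_3$ is Lipschitz with constant $\sup_{|y|\le2}3y_+^2=12$, and it satisfies $\sigma_3(0)=0$. The Ledoux--Talagrand contraction inequality, in its form for the absolute-value Rademacher complexity \cite{ledoux2013probability}, gives
\[
\hat R_n(\sigma_3\circ\mathcal{G})\le 2\cdot 12\,\hat R_n(\mathcal{G}).
\]
The factor $2$ is the price for the absolute value inside the supremum, and this is exactly where the constant $24$ comes from. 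If needed, one can extend $\sigma_3$ outside $[-2,2]$ by a globally $12$-Lipschitz function vanishing at $0$ (for example, linearly beyond $\pm2$), so that the contraction lemma applies verbatim.

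\textbf{Step 3.} Taking $\mathbb{E}_X$ and combining the two steps with Lemma \ref{Lem: linear transformation function class} yields
\[
R_n(\mathcal{F}_m(B))\le 24B\,R_n(\mathcal{G})\le \frac{24B(\sqrt{2\log(2d)}+1)}{\sqrt n}.
\]

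The main point needing care is Step 2. The contraction principle for the absolute-value version requires $\phi(0)=0$, which $\sigma_3$ satisfies, and it introduces the factor $2$. One must also justify the Lipschitz constant on the actual range of the arguments, which uses $\Omega\subset(0,1)^d$ together with the constraint $t\in[-1,1)$. Step 1 is routine, since the symmetric convex hull does not increase the supremum of a linear functional.
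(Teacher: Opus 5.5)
Your proof is correct and follows the same route as the paper: bound the linear-combination class by $B$ times the complexity of $\sigma_3\circ\mathcal{G}$, then apply contraction with Lipschitz constant $12$ and the factor $2$ from the absolute value to reach $24B\,R_n(\mathcal{G})$. Your write-up is in fact cleaner than the paper's, whose intermediate display carries a stray $\frac{1}{m}$ and leaves the $\gamma_i$ outside the supremum, and which does not justify the Lipschitz constant through the range bound $|\boldsymbol{\omega}\cdot\boldsymbol{x}+t|\le 2$ as you do.
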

\begin{proof}
   By utilizing the property from \cite[Lem.~4.4]{li2024priori}, the Rademacher complexity of shallow neural networks $\mathcal{F}_m(B)$ defined in \eqref{eq_networkfunctionclass} can be bounded by
   \begin{equation*}
       R_n(\mathcal{F}_m(B))
       \leq
       \frac{1}{m} \sum_{i=1}^m |\gamma_i|R_n(\sigma_3(\boldsymbol{\omega}_1\cdot \boldsymbol{x} + t_i))
       :=
       \frac{1}{m} \sum_{i=1}^m |\gamma_i|R_n(\sigma_3(\mathcal{G})),    \end{equation*}
    where $\mathcal{G}$ is the linear transformation function class defined in \eqref{linear transformation function class}. Since $\sigma_3$ is $12$-Lipschitz and $\sigma_3(0)=0$. Then applying Lemma \ref{Lem: linear transformation function class} and contraction property, see \cite[Lem.~4.4]{li2024priori} , we have
    \begin{equation*}
        R_n(\mathcal{F}_m(B)) 
        \leq
        \frac{24}{m} \sum_{i=1}^m |\gamma_i|R_n(\mathcal{G})
        \leq
        \frac{24B(\sqrt{ 2\log(2d) }+1)}{\sqrt{n}}.
    \end{equation*}
\end{proof}
   
In the following, we explore the relationship between the generalization error and the Rademacher complexity. Note that estimating the complexity of a shallow neural network depends on the choice of the activation function. Since $\sigma = \mathrm{ReCU}$ is three times continuously differentiable almost everywhere, we can make the following assumptions:
\begin{equation*}
    \sup |\sigma^{(k)}|\leq \ell_k,
    \quad
    k=0,\,1\, ,2\, ,\, 3.
\end{equation*} 
To bound the generalization error, we estimate the Rademacher complexity for the following function classes. Define 
$$K=\max\{\|u^*\|_{\mathcal{B}^6(\Omega)}^2,\, \|f\|_{L^\infty(\Omega)}, \,\|g\|_{L^\infty(\partial\Omega)},\,\|h\|_{L^\infty(\partial\Omega)}\}.$$
For $\boldsymbol{u} = (\varphi_0, \varphi_1)$, let $l(\boldsymbol{u}) = |\Delta \varphi_1 - f|^2 + |\varphi_1 - \Delta \varphi_0|^2$, and define the interior loss function class:
\begin{equation}
	\begin{aligned}
		\mathcal{S}
		=
		\big\{|\Delta \varphi_1 - f|^2 + |\varphi_1 - \Delta \varphi_0|^2 \,
		\big|\, \varphi_0, \varphi_1 \in \mathcal{F}_m(B)\big\}.
	\end{aligned}
\end{equation}
Similarly, let $l(\boldsymbol{u}) = |\varphi_0 - g|^2 + |\partial_{\boldsymbol{n}} \varphi_1 - h|^2$, and define the boundary loss function class:
\begin{equation}
	\begin{aligned}
		\mathcal{S}_{\mathrm{b}}
		=
		\big\{
		|\varphi_0 - g|^2 + |\partial_{\boldsymbol{n}} \varphi_1 - h|^2 \,\big|\, \varphi_0, \varphi_1\in \mathcal{F}_m(B)\big\}.
	\end{aligned}
\end{equation}

We then establish the following estimate for the Rademacher complexity. 
\begin{lemma}\label{lem_RademacherComplexity2} Let \( n \) and \( \widehat{n} \) denote the numbers of sample points in \( \Omega \) and on \( \partial\Omega \), respectively. Assume \( \widehat{n} = O(n) \).  
The function classes \( \mathcal{S} \) and \( \mathcal{S}_{\mathrm{b}} \) satisfy the following complexity bound:
	\begin{equation*}
		R_{n}(\mathcal{S})
		+
		R_{\widehat{n}}(\mathcal{S}_{\mathrm{b}})
		\leq
		\frac{C K^2}{\sqrt{n}},
	\end{equation*}
where \( C > 0 \) depends polynomially on the dimension \( d \).
\end{lemma}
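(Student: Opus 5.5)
We bound the two complexities separately, using the same three ingredients for each: a uniform sup-norm bound on the relevant derivatives of functions in $\mathcal{F}_m(B)$, a contraction (Ledoux--Talagrand) argument to strip off the square, and a reduction to the linear class $\mathcal{G}$ of Lemma \ref{Lem: linear transformation function class}.

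\textbf{Uniform bounds.} For $\varphi=\sum_i\gamma_i\sigma_3(\boldsymbol{\omega}_i\cdot\boldsymbol{x}+t_i)\in\mathcal{F}_m(B)$ we have $|\boldsymbol{\omega}_i|_1=1$, $|t_i|\le1$ and $\boldsymbol{x}\in(0,1)^d$. Hence the argument $\boldsymbol{\omega}_i\cdot\boldsymbol{x}+t_i$ lies in $[-2,2]$. On this interval $\sigma_3,\sigma_3',\sigma_3''$ are bounded by $\ell_0,\ell_1,\ell_2$, so
\[
\Delta\varphi=\sum_i\gamma_i|\boldsymbol{\omega}_i|_2^2\,\sigma_3''(\boldsymbol{\omega}_i\cdot\boldsymbol{x}+t_i)
\quad\text{and}\quad
\partial_{\boldsymbol{n}}\varphi=\sum_i\gamma_i(\boldsymbol{\omega}_i\cdot\boldsymbol{n})\,\sigma_3'(\boldsymbol{\omega}_i\cdot\boldsymbol{x}+t_i)
\]
satisfy $|\varphi|,|\nabla\varphi|,|\Delta\varphi|\le CB$, using $|\boldsymbol{\omega}_i|_2\le|\boldsymbol{\omega}_i|_1=1$. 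Together with the $L^\infty$ bounds on $f,g,h$ and $B=c\|u^*\|_{\mathcal{B}^6}\le c\sqrt K$, each residual, e.g. $r=\Delta\varphi_1-f$, is bounded by $M:=C(\sqrt K+K)\lesssim K$ (assuming $K\ge1$).

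\textbf{Stripping the square.} The map $y\mapsto y^2$ is $2M$-Lipschitz on $[-M,M]$. Using subadditivity of $R_n$ over sums of classes and the contraction lemma \cite[Lem.~4.4]{li2024priori}, we get
\[
R_n(\mathcal{S})\le 2M\big(R_n(\{\Delta\varphi_1-f\})+R_n(\{\varphi_1-\Delta\varphi_0\})\big).
\]
Since the centered term $f$ contributes only $\|f\|_\infty/\sqrt n$, this reduces to bounding $R_n$ of $\{\Delta\varphi\}$, $\{\varphi\}$ and, on the boundary, $\{\partial_{\boldsymbol{n}}\varphi\}$ over $\varphi\in\mathcal{F}_m(B)$.

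\textbf{Reduction to $\mathcal{G}$.} Each of these classes lies in $B$ times the symmetric convex hull of single-neuron classes, namely $\{|\boldsymbol{\omega}|_2^2\sigma_3''(g)\}$, $\{\sigma_3(g)\}$ and $\{(\boldsymbol{\omega}\cdot\boldsymbol{n})\sigma_3'(g)\}$ with $g\in\mathcal{G}$. Taking convex hulls does not increase Rademacher complexity. The $\sigma_3$ case is the preceding lemma. For $\sigma_3''$ the main difficulty is the prefactor $|\boldsymbol{\omega}|_2^2$, which is not a function of $g$ alone. I would handle it via the expansion $|\boldsymbol{\omega}|_2^2=\sum_j\omega_j^2$: write the neuron as $\sum_j|\omega_j|\cdot|\omega_j|\sigma_3''(g)$, so it lies in the convex hull of $\{\pm\sigma_3''(g)\}$ because $\sum_j\omega_j^2\le1$. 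Then contraction with $\sigma_3''$, which is $\ell_3$-Lipschitz on $[-2,2]$, gives a bound $\lesssim R_n(\mathcal{G})$.

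The boundary term is harder, since $\boldsymbol{n}=\boldsymbol{n}(Y)$ depends on the sample point. Here I would use $\boldsymbol{\omega}\cdot\boldsymbol{n}=\sum_j\omega_jn_j$ to write
\[
(\boldsymbol{\omega}\cdot\boldsymbol{n})\sigma_3'(g)=\sum_j\omega_j\,\big[n_j(Y)\,\sigma_3'(g)\big],
\]
which again is a convex combination. Each product $n_j\sigma_3'(g)$ is handled by the vector or pointwise-Lipschitz contraction with constant $|n_j|\ell_2\le\ell_2$. The Lipschitz maps are allowed to depend on the index $i$, which the Ledoux--Talagrand inequality permits. This costs at most a factor $d$, which is polynomial in $d$ as claimed.

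\textbf{Collecting.} Combining the steps yields $R_n(\mathcal{S})\lesssim MB\,(\sqrt{2\log 2d}+1)/\sqrt n\lesssim K^2/\sqrt n$, and likewise $R_{\widehat n}(\mathcal{S}_{\mathrm b})\lesssim dK^2/\sqrt{\widehat n}$. Finally, $\widehat n=O(n)$ is used to express everything in terms of $n$.
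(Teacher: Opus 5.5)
Your proposal is correct and follows essentially the same route as the paper. Both split into interior and boundary classes, bound the residuals uniformly, strip the square by contraction, and reduce $\Delta\varphi$ and $\partial_{\boldsymbol{n}}\varphi$ coordinate-wise to $R_n(\sigma_3^{(k)}(\mathcal{G}))\lesssim R_n(\mathcal{G})$. You are more careful than the paper about the $|\boldsymbol{\omega}|_2^2$ prefactor, about the sample-dependent normal $\boldsymbol{n}(Y_j)$ (which the paper passes over), and about the need for $K\ge 1$. One caveat, shared with the paper: converting $1/\sqrt{\widehat{n}}$ into $1/\sqrt{n}$ actually requires $n\lesssim\widehat{n}$, not $\widehat{n}=O(n)$ as the lemma's hypothesis is written.
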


\begin{proof}
According to the definition of the Rademacher complexity, it follows directly that the upper bound of $\mathcal{S}$ can be divided into the following parts:
\begin{equation*}
\left\{
    \begin{aligned}
    \mathcal{S}_1
       =&
       \{ |\Delta \varphi_1 - f|^2 \,\big|\, \varphi_1 \in \mathcal{F}_{m}(B)\},\\
       \mathcal{S}_2
       =&
       \{ |\Delta \varphi_0 - \varphi_1|^2 \,\big|\, \varphi_0,\,\varphi_1 \in \mathcal{F}_{m}(B)\}.
    \end{aligned}
    \right.
\end{equation*}
Here we set $B=c\|u^*\|_{\mathcal{B}^6(\Omega)}$. 
According to property from \cite[Lem.~4.4]{li2024priori}, we obtain
\begin{equation*}
    R_n(\mathcal{S})
    \leq
    R_n(\mathcal{S}_1) + R_n(\mathcal{S}_2).
\end{equation*}
Next, we estimate each term in the above inequality.
The first term appears as a gradient penalty term in the loss functions of both problems. Since $\Delta \varphi_1 = \sum_{j=1}^d\partial_{x_j x_j} \varphi_1 $, viewing each dimension separately yields, for $j=1,\,2,\,\cdots,\, d$,
\begin{equation*}
\begin{aligned}
    \partial_{x_j}\varphi_1
    =
    \sum_{i=1}^m \gamma_i \omega_{i,j}\sigma_3^{(1)}(\boldsymbol{\omega}_i\cdot \boldsymbol{x} + t_i), \mbox{ and }
    \partial_{x_j x_j}\varphi_1
    =
    \sum_{i=1}^m \gamma_i |\omega_{i,j}|^2 \sigma_3^{(2)}(\boldsymbol{\omega}_i\cdot \boldsymbol{x} + t_i)
    \end{aligned}
\end{equation*}
where $ \omega_{i,j}$ denotes the $j$-th component of $\boldsymbol{\omega}_i$. Then it follows that 
\begin{equation*}
    \begin{aligned}
    \Delta \varphi_1 
       =
    \sum_{j=1}^d\sum_{i=1}^m \gamma_i |\omega_{i,j}|^2 \sigma_3^{(2)}(\boldsymbol{\omega}_i\cdot \boldsymbol{x} + t_i)
    \leq 
    \sum_{i=1}^m \gamma_i |\omega_{i}|^2_1 \sigma_3^{(2)}(\boldsymbol{\omega}_i\cdot \boldsymbol{x} + t_i),
    \end{aligned}
\end{equation*}
which implies that 
\begin{equation*}
    \|\Delta \varphi_1 - f \|_{L^\infty(\Omega)}
    \leq\,
    \|\Delta \varphi_1  \|_{L^\infty(\Omega)} + 
    \| f \|_{L^\infty(\Omega)}
    \leq\,
    C (\ell_2 + 1)K.
\end{equation*}
Hence, we deduce that $R_n(\mathcal{S}_1)$ can be bounded by
\begin{equation*}
    \begin{aligned}
        4\sup_{\varphi_0, \varphi_1\in \mathcal{F}_{m}(B)
     }&
        \|\Delta \varphi_1 - f\|_{L^\infty(\Omega)}
        \big( R_n( f )
        +
        \sum_{j=1}^d\sum_{i=1}^m|\gamma_i| |\omega_{i,j}|^2R_n (\sigma_3^{(2)}(\mathcal{G}))
        \big)\\
        \leq& 
        \frac{cK(B(\sqrt{\log (2d)}+1) + \|f\|_{L^\infty})}{\sqrt{n}}
         \leq
        \frac{C K^2(\sqrt{\log(2d)}+1)}{\sqrt{n}}, 
    \end{aligned}
\end{equation*}
where we have used the inequality
\begin{equation*}
\begin{aligned}
\sum_{j=1}^d\sum_{i=1}^m|\gamma_i| |\omega_{i,j}|^2 R_n (\sigma_3^{(1)}(\mathcal{G}))
\leq&
\sum_{i=1}^m|\gamma_i| |\boldsymbol{\omega}_{i}|_1^2 R_n (\sigma_3^{(1)}(\mathcal{G}))\\
\leq&
2 B\ell_3 R_n (\mathcal{G})
\leq
\frac{CB(\sqrt{\log (2d)}+1)}{\sqrt{n}}.
\end{aligned}
\end{equation*}
In addition, for the function class $\mathcal{S}_2$, we know that $\|\varphi_1\|_{L^\infty} \leq C B$, which yields, in a similar way,
\begin{equation}
    \begin{aligned}
        R_{n}(\mathcal{S}_2)
        \leq\,&
        4\sup_{\varphi_0, \varphi_1\in \mathcal{F}_m(B)}
        \| \Delta\varphi_0 - \varphi_1 \|_{L^\infty(\Omega)}
        (2 B\ell_3 R_n (\mathcal{G})  + R_n(\mathcal{F}_m))\\
        \leq\,&
        \frac{CB^2\sqrt{\log(2d)}}{\sqrt{n}}.
    \end{aligned}
\end{equation}
The last step is to estimate the function classes concerning the boundary conditions. We denote
\begin{equation*}
    \left\{
    \begin{aligned}
        S_{\mathrm{b},1}
        =&
        \{|\varphi_0 - g|^2 : \varphi_0 \in \mathcal{F}_m(B) \},\\
        S_{\mathrm{b},2}
        =&
        \{|\partial_{\boldsymbol{n}}\varphi_0 - h|^2 : \varphi_0 \in \mathcal{F}_m(B) \}.
    \end{aligned}
    \right.
\end{equation*}
Similarly, we obtain
\begin{equation*}
\begin{aligned}
    R_{\widehat{n}}(S_{\mathrm{b},1})
    \leq\,&
    4\sup_{\varphi_0 \in \mathcal{F}_m(B)}
        \| \varphi_0 - g\|_{L^\infty(\partial\Omega)}
        ( R_{\widehat{n}} (\mathcal{G})  + R_{\widehat{n}}(g))\\
        \leq\,&
        \frac{CK^2(\sqrt{\log(2d)} + 1)}{\sqrt{n}}.
        \end{aligned}
\end{equation*}
On the other hand, since 
$$\partial_{\boldsymbol{n}}\varphi_0 = \nabla\varphi_0 \cdot \boldsymbol{n} = \sum_{j=1}^d  n_j\, \partial_{x_j}\varphi_0
    =
    \sum_{j=1}^d  n_j\, \sum_{i=1}^m \gamma_i \omega_{i,j}\sigma_3^{(1)}(\boldsymbol{\omega}_i\cdot \boldsymbol{x} + t_i),$$
it follows that $\|\partial_{\boldsymbol{n}}\varphi_0\|_{L^\infty(\partial\Omega)} \leq 
C B$. Hence we have
\begin{equation*}
\begin{aligned}
    R_{\widehat{n}}(\mathcal{S}_{\mathrm{b},2})
    \leq\,&
    4\sup_{\varphi_0\in \mathcal{F}_m(B)} \|\partial_{\boldsymbol{n}}\varphi_0- h\|_{L^\infty(\partial\Omega)}
    (\sum_{j=1}^d\sum_{i=1}^m|\gamma_i| |\omega_{i,j}| R_n (\sigma_3^{(1)}(\mathcal{G})) + R_{\widehat{n}}(h)\\
    \leq\,&
    \frac{C K^2(\sqrt{\log(2d)}+1)}{\sqrt{\widehat{n}}}.
    \end{aligned}
\end{equation*}
In general, the number of sample points on the boundary is not the same as that in the interior domain. Setting $\widehat{n}=O(n)$ then leads to the upper bound on the Rademacher complexity.
\end{proof}

\subsection{Proof of Theorem \ref{thm2}}
Now we are ready to apply the preceding results to prove Theorem \ref{thm2}.

\begin{proof}[Proof of Theorem \ref{thm2}]
From Lemma \ref{lem_generalerror}, we conclude that the total errors $\|\varphi_0^n - u^*\|_{H^1(\Omega)}^2$, $\|\varphi_1^* - \Delta u^*\|_{L^2(\Omega)}^2$, and $\|\Delta(\varphi_0^* - \Delta^k u^*) \|_{L^2(\Omega)}^2$ for $k=0,1$ can be bounded by the generalization error
\begin{equation*}
    \mathcal{E}_{\rm gen}
       =
       \sup_{\varphi_0,\varphi_1\in \mathcal{F}_{m}(B)}
       |\mathcal{L}(\boldsymbol{u_\theta})
       -
       \mathcal{L}_n(\boldsymbol{u}_\theta)|,
\end{equation*}
        and the approximation error
\begin{equation*}
            \mathcal{E}_{\rm app}
        =
        \inf_{\varphi_0,\varphi_1\in \mathcal{F}_{m}(B)}
    \sum_{k=0,1} \|\varphi_k-\Delta^{k}u^*\|_{H^2(\Omega)}^2.
        \end{equation*}
By Lemma \ref{lem_sigma2approximation}, since $u^*\in \mathcal{B}^{6}(\Omega)$, we derive the following estimate for the approximation error:
\begin{equation*}
        \mathcal{E}_{\rm app}
        \leq 
        C\|u^*\|_{\mathcal{B}^{4}(\Omega)}^2m^{-(1 + \frac{2}{3d})}.
\end{equation*}
Here we set $B=c\|u^*\|_{\mathcal{B}^{6}(\Omega)}^2$ for a constant $c>0$. Furthermore, for the generalization error, applying Lemma \ref{lem_RademacherComplexity2} together with the relation between generalization error and Rademacher complexity, see \cite[Lem.~4.8]{li2024priori}, we deduce that the following estimate holds for $\varphi_k \in \mathcal{F}(B)$:
\begin{equation*}
    \mathbb{E}|\mathcal{L}(\boldsymbol{u}_\theta)
       -
       \mathcal{L}_n(\boldsymbol{u}_\theta)|
       \leq\,
        C\big( R_{n}(\mathcal{S})
        +
        R_{\widehat{n}}(\mathcal{S}_b) \big)
        \leq\,
        \frac{C K^2}{\sqrt{n}},
\end{equation*}
where the expectation is taken with respect to the random sampling of training data in $\Omega$ and on $\partial\Omega$, and we take $\widehat{n}=O(n)$. The constant $C$ appearing in the inequality depends at most polynomially on $d$. 

Finally, applying Lemma \ref{lem_generalerror}, we have thus completed the proof of the theorem.
\end{proof}

\section{Numerical Experiments}\label{se5}
In this section, we present numerical experiments to verify our theoretical results and compare the practical performance of different neural network methods for high-dimensional biharmonic equations. We emphasize that the aforementioned theoretical analysis is based on the empirical loss function (\ref{eq_EmpiricalLoss}), which will be denoted as the $\mathrm{MIM_a}$ method in this section.

We consider the biharmonic equation with clamped boundary conditions in the domain $\Omega = (-1,1)^d$:
\begin{equation*}
     \left\{
 \begin{aligned}
     	&\Delta^2 u
         =
         \frac{\pi^4}{16} \sum_{k=1}^d \sin \left(\frac{\pi  x_k}{2} \right), \quad \text{in } \Omega, \\
         &u
         =
         \sum_{k=1}^d \sin \left(\frac{\pi x_k}{2}\right),\quad \partial_{\boldsymbol{n}}u=0, \quad \text{on } \partial\Omega.  
 \end{aligned}
     \right.
 \end{equation*}
The exact solution is $u^*(\boldsymbol{x}) = \sum_{k=1}^d \sin (\frac{\pi x_k}{2})$. Throughout this section, we use the PINN method from \cite{raissi2019physics} as a baseline algorithm. Moreover, we introduce another mixed residual method, denoted $\mathrm{MIM_b}$, which uses a neural network $\boldsymbol{u} = (\phi_0,\, \boldsymbol{\psi}_0,\, \phi_1,\, \boldsymbol{\psi}_1)$ to approximate the true solution $\boldsymbol{u}^* = (u^*,\, \nabla u^*,\, \Delta u^*,\, \nabla\Delta u^*)$. The expected loss function of $\mathrm{MIM_b}$ is defined as:
\begin{equation*}
\begin{aligned}
\mathcal{L}_{\mathrm{MIM_b}}
 =&
 \| \bdiv\, \boldsymbol{\psi}_{1} - f \|_{L^2(\Omega)}^2 
 + 
 \| \boldsymbol{\psi}_1 - \nabla\phi_1 \|_{L^2(\Omega)}^2  
 +
 \| \phi_1 - \bdiv\,\boldsymbol{\psi} _0 \|_{L^2(\Omega)}^2  \\
 &+
 \| \boldsymbol{\psi}_0 - \nabla\phi _0 \|^2_{L^2(\Omega)}
 + \lambda \| {{\phi _0} - g} \|^2_{{L^2}(\partial \Omega )} + \lambda {\| {{\boldsymbol{\psi} _0} \cdot \boldsymbol{n}} \|^2_{{L^2}(\partial \Omega )}},
 \end{aligned}
 \end{equation*}
where $\lambda$ is the weight of the boundary loss, and $g = \sum_{k=1}^d \sin \left(\frac{\pi x_k}{2}\right)$. In practice, the empirical loss $\mathcal{L}_{\mathrm{MIM_b},n}$ is minimized to approximate the solution of the PDE.

\begin{table}[htbp]
\centering
\caption{Relative $L^2$ errors ($\times 10^{-4}$) of $u$ and its derivatives for different methods in dimension $d=2$.}
\label{tab:rel_error_d2}
\setlength{\tabcolsep}{6pt}
\renewcommand{\arraystretch}{1.2}
\begin{tabular}{c c c c c c c c }
\hline
Activation & Method & $u$ & $\nabla u$ & $\Delta u$ & $\nabla(\Delta u)$ & $\Delta^2 u$  & Time(s)\\
\hline
\multirow{3}{*}{ReCU}
  &$\mathrm{MIM_a}$ & 0.935 & 4.871 & 5.160 & 21.036 & 13.702 & 882.95 \\
 & $\mathrm{MIM_b}$ & 2.607 & 8.148 & 9.435 & 31.962 & 1.245 & 1941.56\\
 & PINN & - & -  & - & - & - & - \\
\hline
\multirow{3}{*}{$\rm ReLU^5$}
 & $\mathrm{MIM_a}$ & 0.561 & 2.666 & 5.379 & 23.326 & 1.243 & 999.31 \\
 & $\mathrm{MIM_b}$ & 1.190 & 4.358 & 13.262 & 56.799 & 0.603 & 3290.34\\
 & PINN & 0.235 & 0.645 & 2.026 & 9.148 & 19.555 & 3451.98 \\
\hline
\end{tabular}
\end{table}


\begin{table}[htbp]
\centering
\caption{Relative $L^2$ errors ($\times 10^{-4}$) of $u$ and its derivatives for different methods in dimension $d=5$.}
\label{tab:rel_error_d5}
\setlength{\tabcolsep}{6pt}
\renewcommand{\arraystretch}{1.2}
\begin{tabular}{c c c c c c c c }
\hline
Activation & Method & $u$ & $\nabla u$ & $\Delta u$ & $\nabla(\Delta u)$ & $\Delta^2 u$  & Time(s)\\
\hline
\multirow{3}{*}{ReCU}
  & $\mathrm{MIM_a}$ & 0.424 & 2.197 & 7.792 & 28.502 & 46.176 & 4298.10\\
 & $\mathrm{MIM_b}$ & 2.538 & 10.967  & 24.144 & 74.537 & 3.185 & 5622.52 \\
 & PINN & - & -  & - & - & - & - \\
\hline
\multirow{3}{*}{$\rm ReLU^5$}
 & $\mathrm{MIM_a}$ & 0.530 & 2.555 & 6.680 & 22.682 & 1.593 & 2396.09 \\
 & $\mathrm{MIM_b}$ & 0.689 & 27.684 & 57.737 & 206.121 & 0.350 & 4362.16\\
 & PINN & 0.462 & 1.161 & 3.339 & 12.362 & 51.044 & 4599.15 \\
\hline
\end{tabular}
\end{table}

\begin{table}[htbp]
\centering
\caption{Relative $L^2$ errors ($\times 10^{-4}$) of $u$ and its derivatives for different methods in dimension $d=6$.}
\label{tab:rel_error_d6}
\setlength{\tabcolsep}{6pt}
\renewcommand{\arraystretch}{1.2}
\begin{tabular}{c c c c c c c c }
\hline
Activation & Method & $u$ & $\nabla u$ & $\Delta u$ & $\nabla(\Delta u)$ & $\Delta^2 u$  & Time(s)\\
\hline
\multirow{3}{*}{ReCU}
  & $\mathrm{MIM_a}$ & 0.515 & 3.914 & 7.939 & 22.771 & 36.145 & 4806.66\\
 & $\mathrm{MIM_b}$ & 2.548  & 12.395 & 26.421 & 81.319 & 5.627 & 7859.84\\
 & PINN & - & -  & - & - & - & - \\
\hline
\multirow{3}{*}{$\rm ReLU^5$}
 & $\mathrm{MIM_a}$ & 0.522 & 2.440 & 7.689 & 24.735 & 1.986 & 3073.42 \\
 & $\mathrm{MIM_b}$ & 2.616 & 18.194 & 29.037 & 91.642 & 3.350& 4039.33\\
  & PINN & 0.336 & 0.981 & 2.466 & 9.694 & 61.077 & 6536.48 \\
\hline
\end{tabular}
\end{table}


\begin{table}[htbp]
\centering
\caption{Relative $L^2$ errors ($\times 10^{-4}$) of $u$ and its derivatives for different methods in dimension $d=8$.}
\label{tab:rel_error_d8}
\setlength{\tabcolsep}{6pt}
\renewcommand{\arraystretch}{1.2}
\begin{tabular}{c c c c c c c c }
\hline
Activation & Method & $u$ & $\nabla u$ & $\Delta u$ & $\nabla(\Delta u)$ & $\Delta^2 u$  & Time(s)\\
\hline
\multirow{3}{*}{ReCU}
 & $\mathrm{MIM_a}$ & 0.887 & 6.595 & 22.365 & 62.518 & 75.919 & 6790.71\\
 & $\mathrm{MIM_b}$ & 2.764  & 22.346 &48.818 & 147.842 & 5.319 & 7216.25\\
 & PINN & - & -  & - & - & - & - \\
\hline
\multirow{3}{*}{$\rm ReLU^5$}
   & $\mathrm{MIM_a}$ & 0.658 & 3.919 & 18.651 & 54.091 & 11.605 & 6146.98 \\
 & $\mathrm{MIM_b}$ & 2.448 & 9.438  & 19.157 & 55.123 & 10.870& 7450.41\\
  & PINN & 0.443 & 1.281 & 3.207 & 12.811 & 73.363 & 7828.70\\
\hline
\end{tabular}
\end{table}


For the numerical experiments, we employ a shallow feedforward neural network. ReCU and $\mathrm{ReLU^5}$ are selected as the activation functions for $\mathrm{MIM_a}$, $\mathrm{MIM_b}$ and PINN, respectively. Tables \ref{tab:rel_error_d2}–\ref{tab:rel_error_d8} summarize the relative $L^2$ errors of the neural network approximation and their derivatives across dimensions $d \in \{2,\, 5,\, 6,\, 8\}$. Additionally, the final column reports the computational time required for $50,000$ training epochs. To ensure a fair comparison across methods, key hyperparameters are kept constant, including the network width ($m=200$) and the spatial discretization ($8000$ interior points and $6000$ boundary points). Note that these timing results are provided primarily for reference, as execution time is hardware-dependent and subject to inherent randomness.

As observed from Tables \ref{tab:rel_error_d2}–\ref{tab:rel_error_d8}, while PINN requires activation functions with higher regularity, it achieves accuracy merely comparable to that of $\mathrm{MIM_a}$ for the neural network approximation of both the solution and its derivatives, at the expense of higher computational cost. Notably, when utilizing the identical $\mathrm{ReLU^5}$ activation function, the accuracy of $\mathrm{MIM_a}$ for the fourth-order derivative improves by an order of magnitude, as the error is reduced from $10^{-3}$ to $10^{-4}$ in most cases. Furthermore, the comparison between $\mathrm{MIM_a}$ and $\mathrm{MIM_b}$ reveals that increasing the degree of system splitting does not strictly translate into improved overall accuracy. Although $\mathrm{MIM_b}$ effectively mitigates the error associated with the highest-order operator, it tends to amplify errors in lower-order quantities in high-dimensional settings. Consequently, balancing the trade-offs among these split terms presents a new set of challenges. This demonstrates that the specific splitting strategy employed in $\mathrm{MIM_a}$ strikes an optimal balance: it adequately reduces the required regularity of the solution without overly fragmenting the system, thereby preserving the correlation between lower- and higher-order terms. Notably, as the dimensionality increases from $d=2$ to $d=8$, $\mathrm{MIM_a}$ exhibits robustness and scalability, maintaining a consistent error magnitude without suffering from severe exponential growth in computational overhead.
\begin{figure}[htbp]
    \centering
    \begin{subfigure}[b]{0.45\textwidth}
        \centering
        \includegraphics[width=\linewidth]{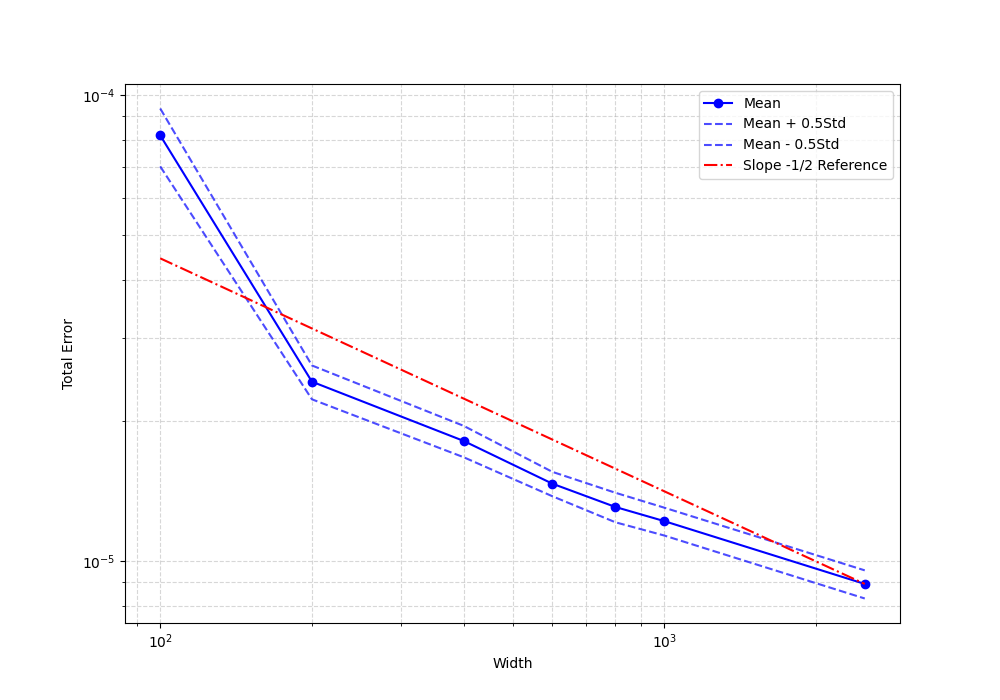} 
        \caption{}
    \end{subfigure}
    \hfill
    \begin{subfigure}[b]{0.45\textwidth}
        \centering
        \includegraphics[width=\linewidth]{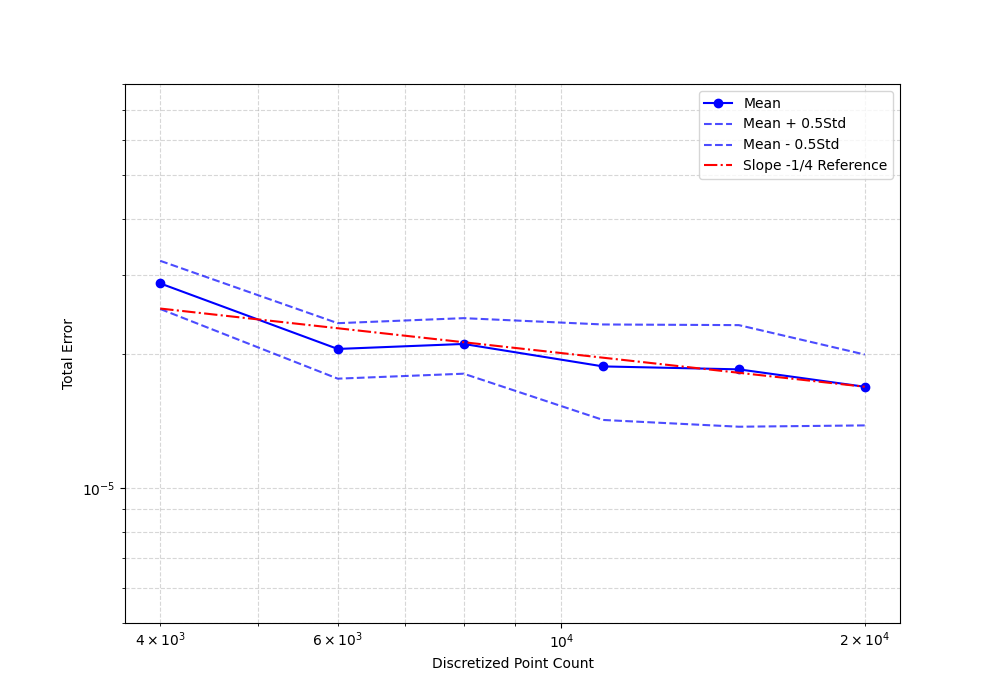} 
        \caption{}
    \end{subfigure}
    \caption{Log-log plots of the total error under the ReCU activation function. Panel (A) shows the decay with increasing network width, while panel (B) illustrates the convergence with respect to the number of sampling points.}
     \label{fig:log-log}
\end{figure}

To further validate the theoretical approximation rate in Theorem~\ref{thm2}, we present the total error with respect to the number of neurons or sampling points on a log-log scale in Figure~\ref{fig:log-log}. It can be observed that the approximation rates exhibit a nearly linear behavior when either the number of sampling points or the network width is fixed. This observation aligns uniformly with the theoretical error bound established in Theorem~\ref{thm2}. More precisely, for the network width study, experiments are conducted on a 2D domain with 6,000 randomly sampled interior points and 1,500 boundary points, testing widths of 100, 200, 400, 600, 800, 1,000, and 2,500. For the sampling point study, we fix the network width at 200 and the number of boundary points at 6,000, while varying the number of interior points as follows: 4,000, 6,000, 8,000, 11,000, 15,000, and 20,000. Each configuration is first trained using the Adam optimizer, for 10,000 iterations in the width study and 15,000 iterations in the sampling point study,  followed by LBFGS optimization. To mitigate the effect of optimizer randomness, we perform 10 independent runs with different random seeds.

\begin{figure}[htbp]
\centering

\begin{subfigure}[t]{0.5\textwidth}
  \centering
  \includegraphics[width=\linewidth]{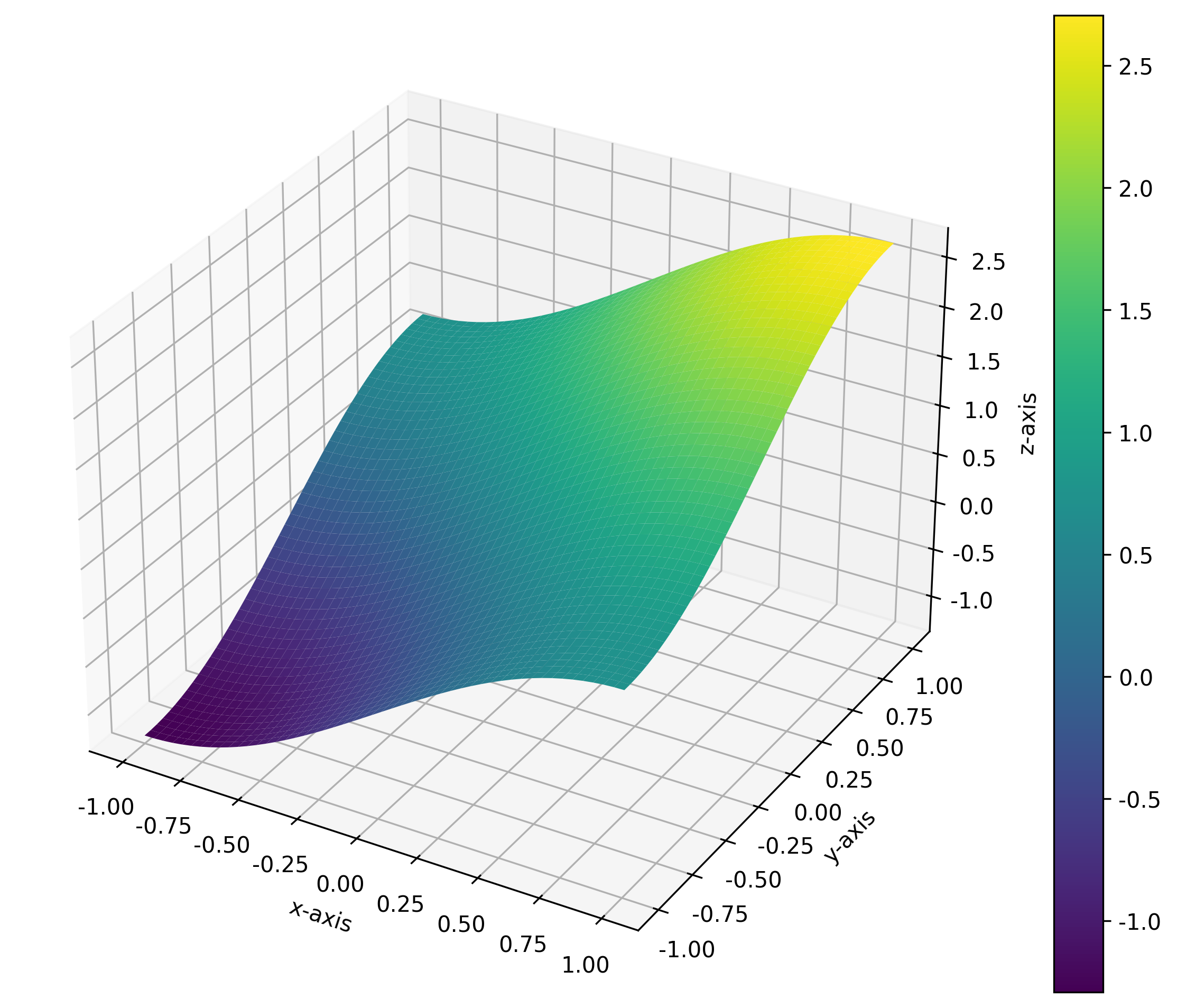}
  \caption{Exact}
\end{subfigure}

\vspace{0.5em}

\begin{subfigure}[t]{0.32\textwidth}
  \centering
  \includegraphics[width=\linewidth]{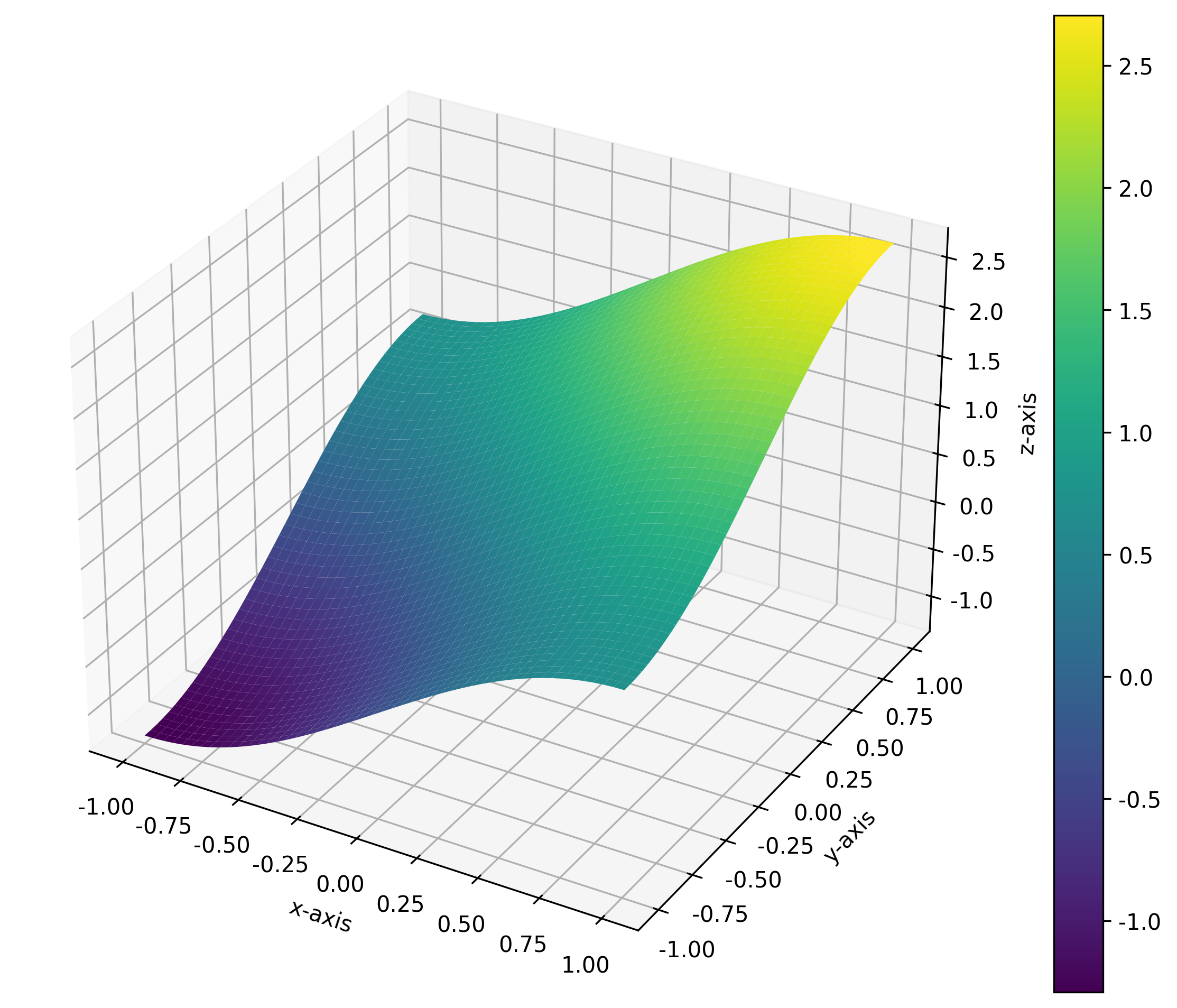}
  \caption{$\rm MIM_a$}
\end{subfigure}\hfill
\begin{subfigure}[t]{0.32\textwidth}
  \centering
  \includegraphics[width=\linewidth]{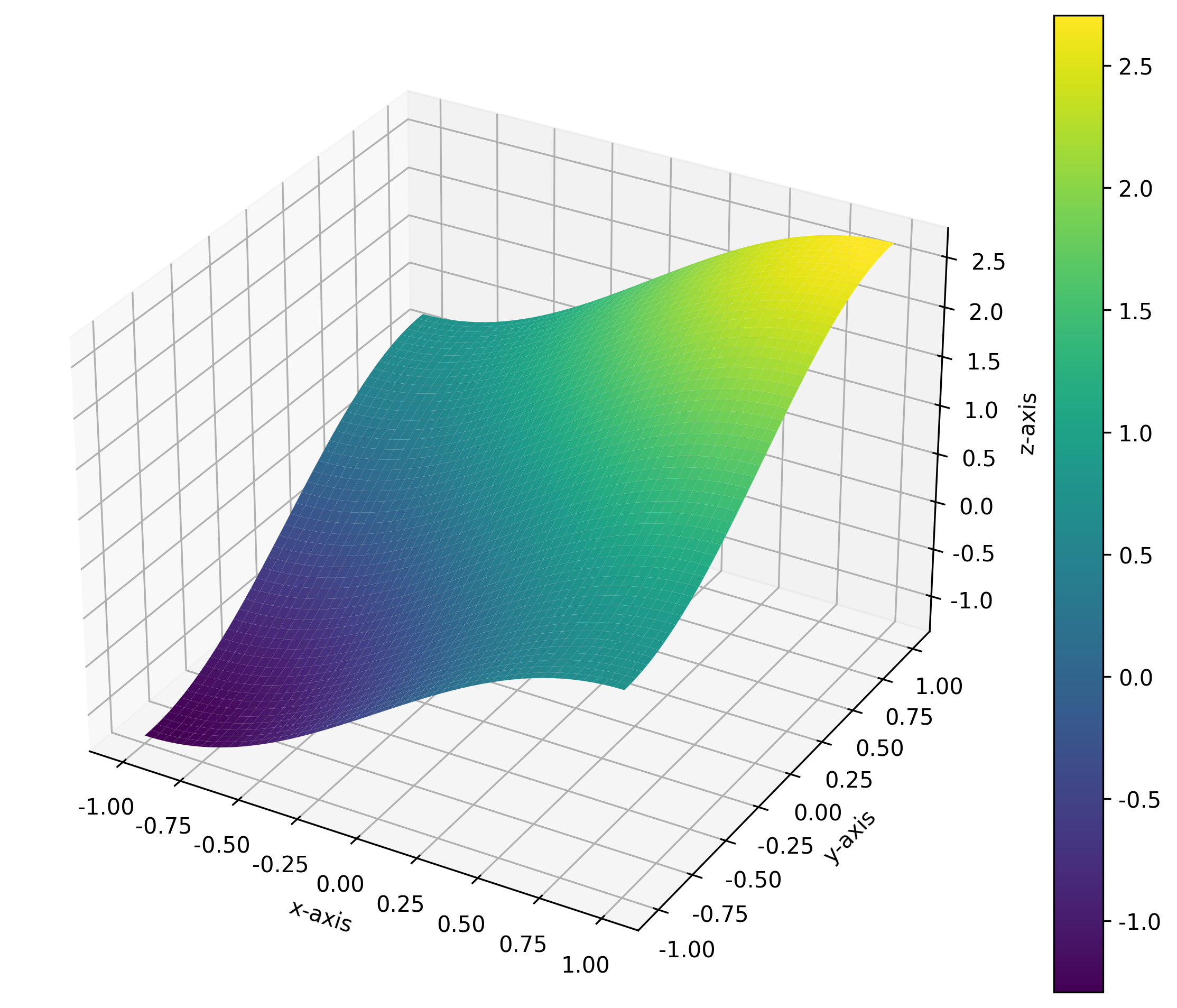}
  \caption{$\rm MIM_b$}
\end{subfigure}\hfill
\begin{subfigure}[t]{0.32\textwidth}
  \centering
  \includegraphics[width=\linewidth]{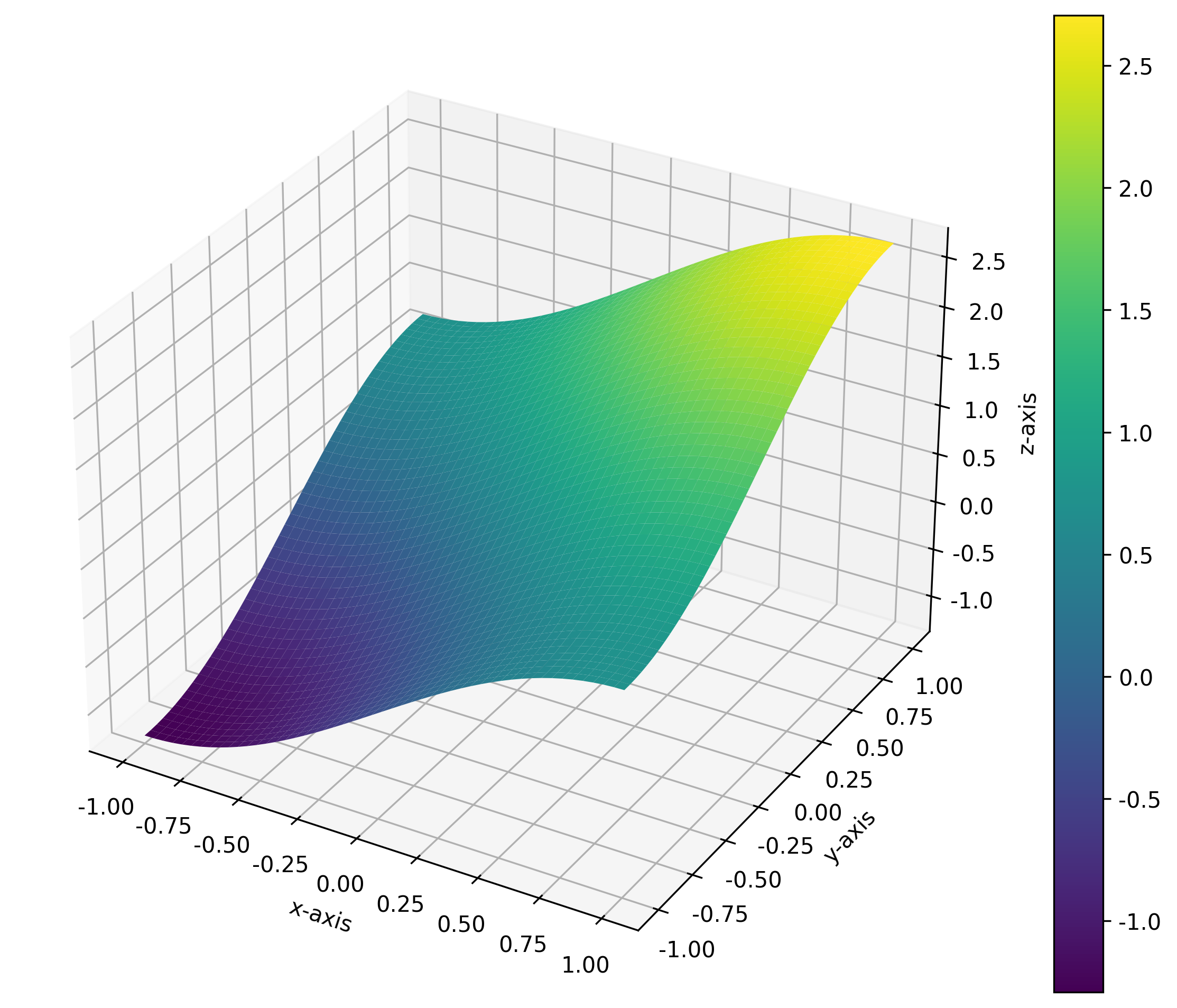}
  \caption{PINN}
\end{subfigure}

\caption{Slice plots of the exact solution and numerical solutions obtained by three methods on the plane $x_3 = 0.5$.}
\label{fig:slice-solutions}
\end{figure}

\begin{figure}[htbp]
\centering

\begin{subfigure}[t]{0.32\textwidth}
  \centering
  \includegraphics[width=\linewidth]{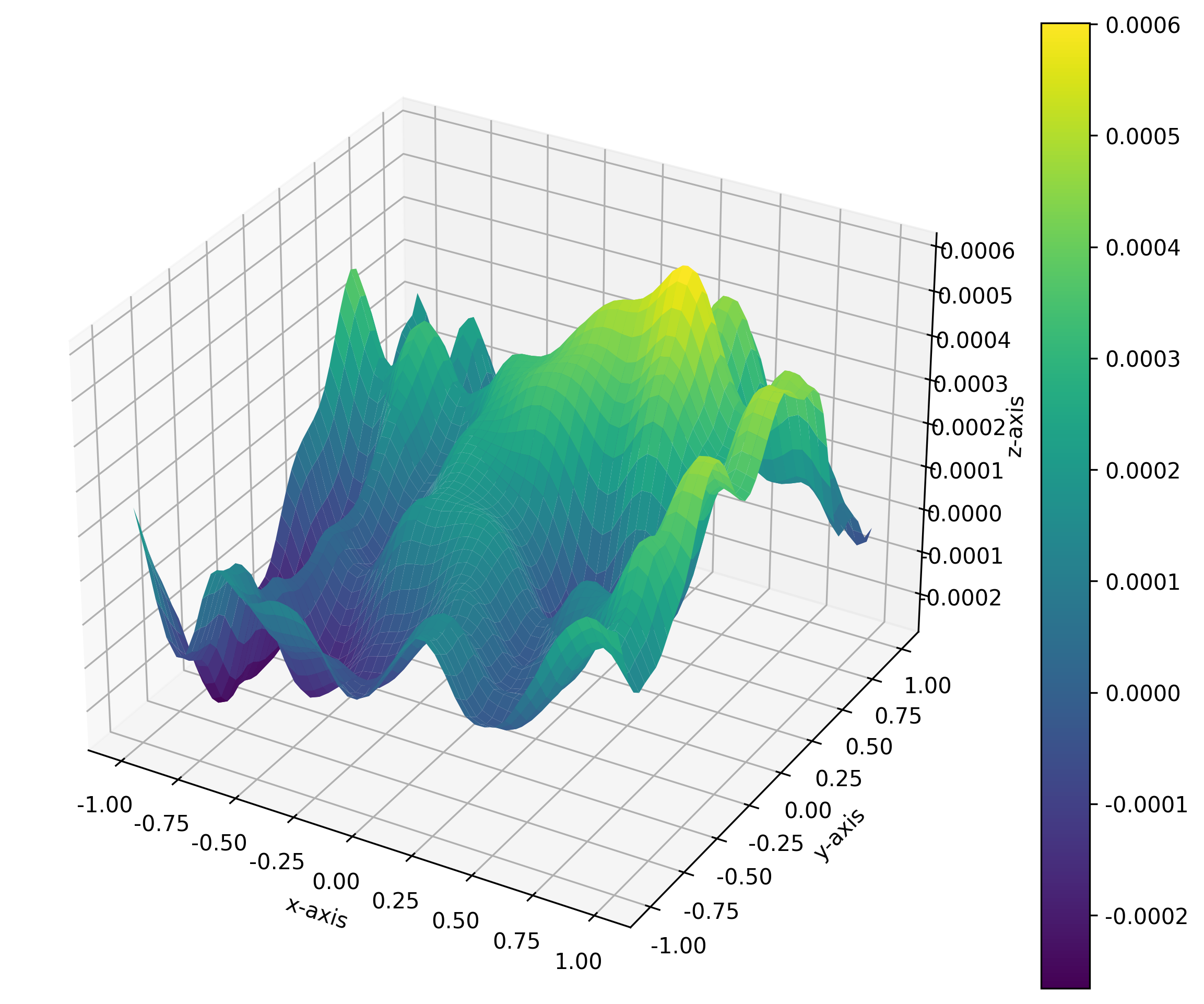}
  \caption{$\rm MIM_a$}
\end{subfigure}\hfill
\begin{subfigure}[t]{0.32\textwidth}
  \centering
  \includegraphics[width=\linewidth]{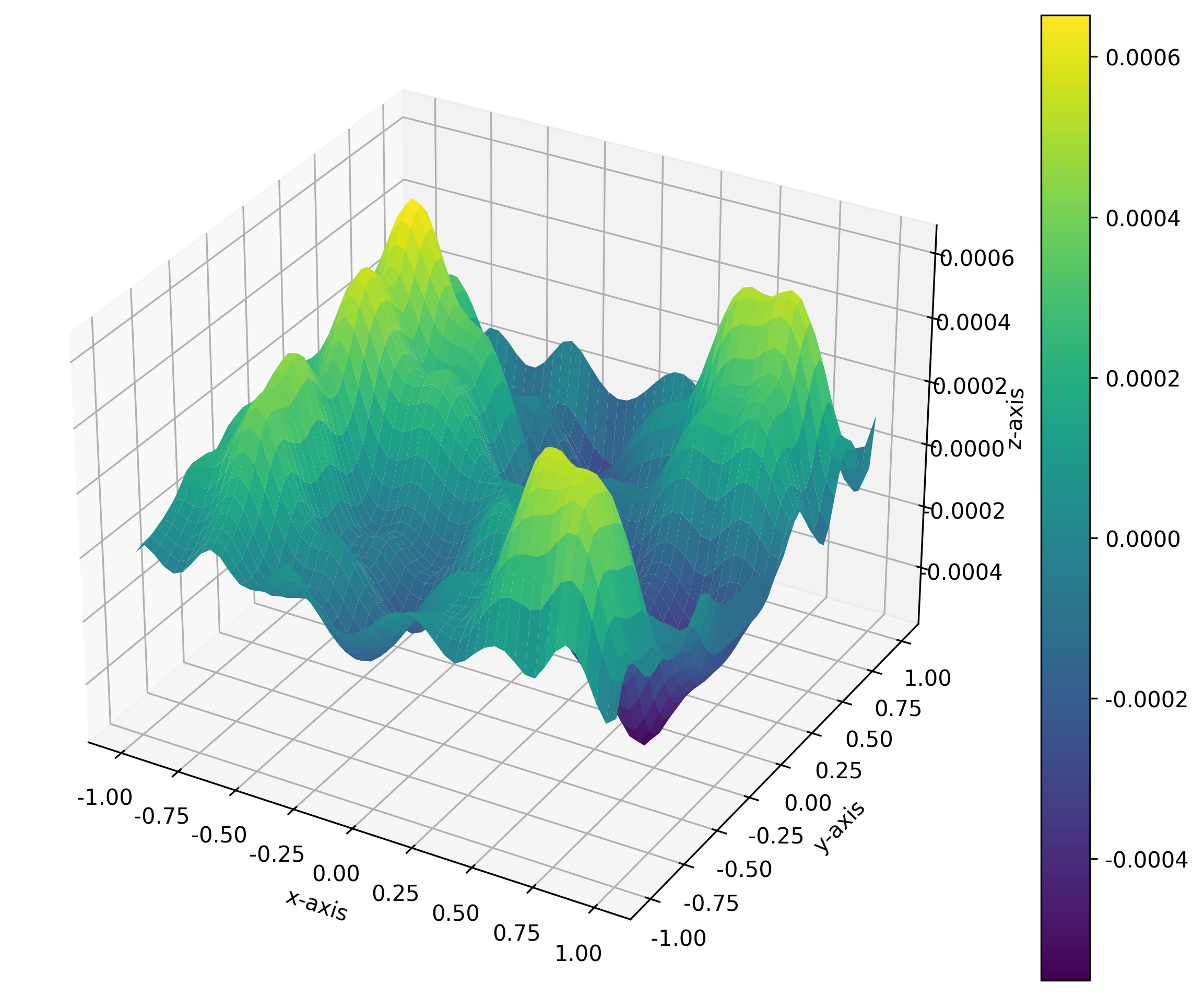}
  \caption{$\rm MIM_b$}
\end{subfigure}\hfill
\begin{subfigure}[t]{0.32\textwidth}
  \centering
  \includegraphics[width=\linewidth]{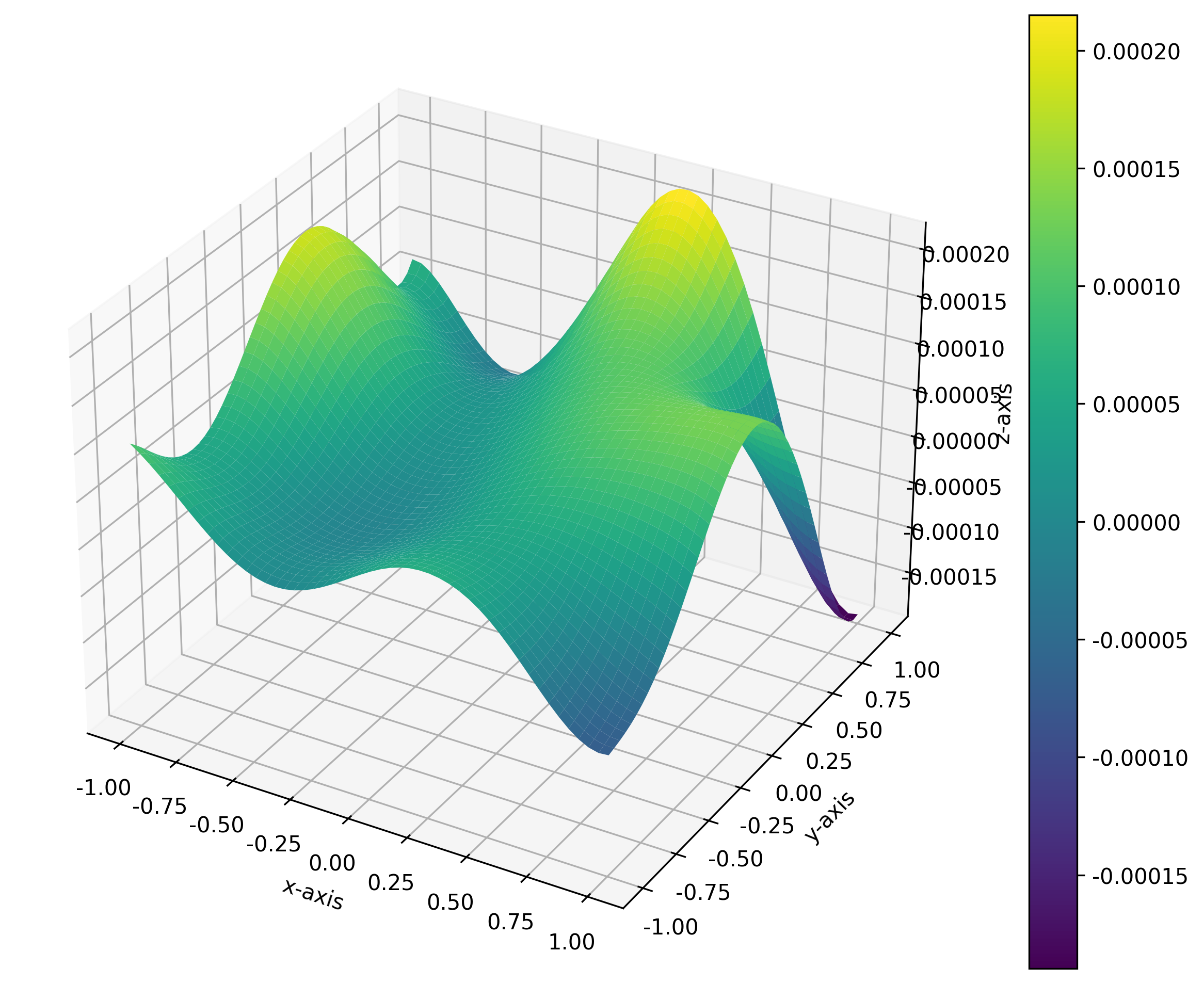}
  \caption{PINN}
\end{subfigure}

\caption{Pointwise error slice plots corresponding to the numerical solutions shown in Figure~\ref{fig:slice-solutions} on the plane $x_3 = 0.5$.}
\label{fig:slice-error}
\end{figure}

\begin{figure}[htbp]
\centering

\begin{subfigure}[t]{0.5\textwidth}
  \centering
  \includegraphics[width=\linewidth]{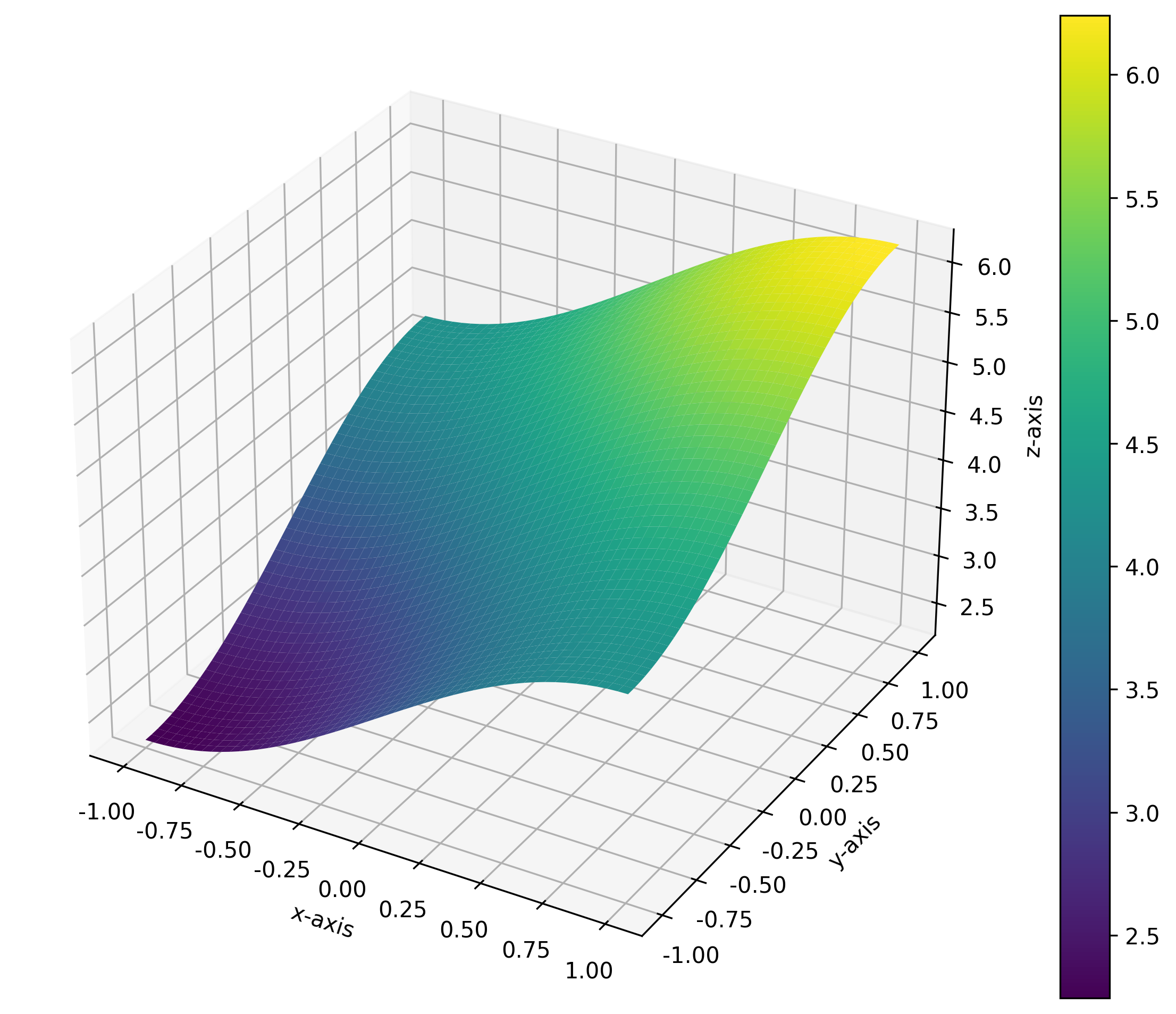}
  \caption{Exact}
\end{subfigure}

\vspace{0.5em}

\begin{subfigure}[t]{0.32\textwidth}
  \centering
  \includegraphics[width=\linewidth]{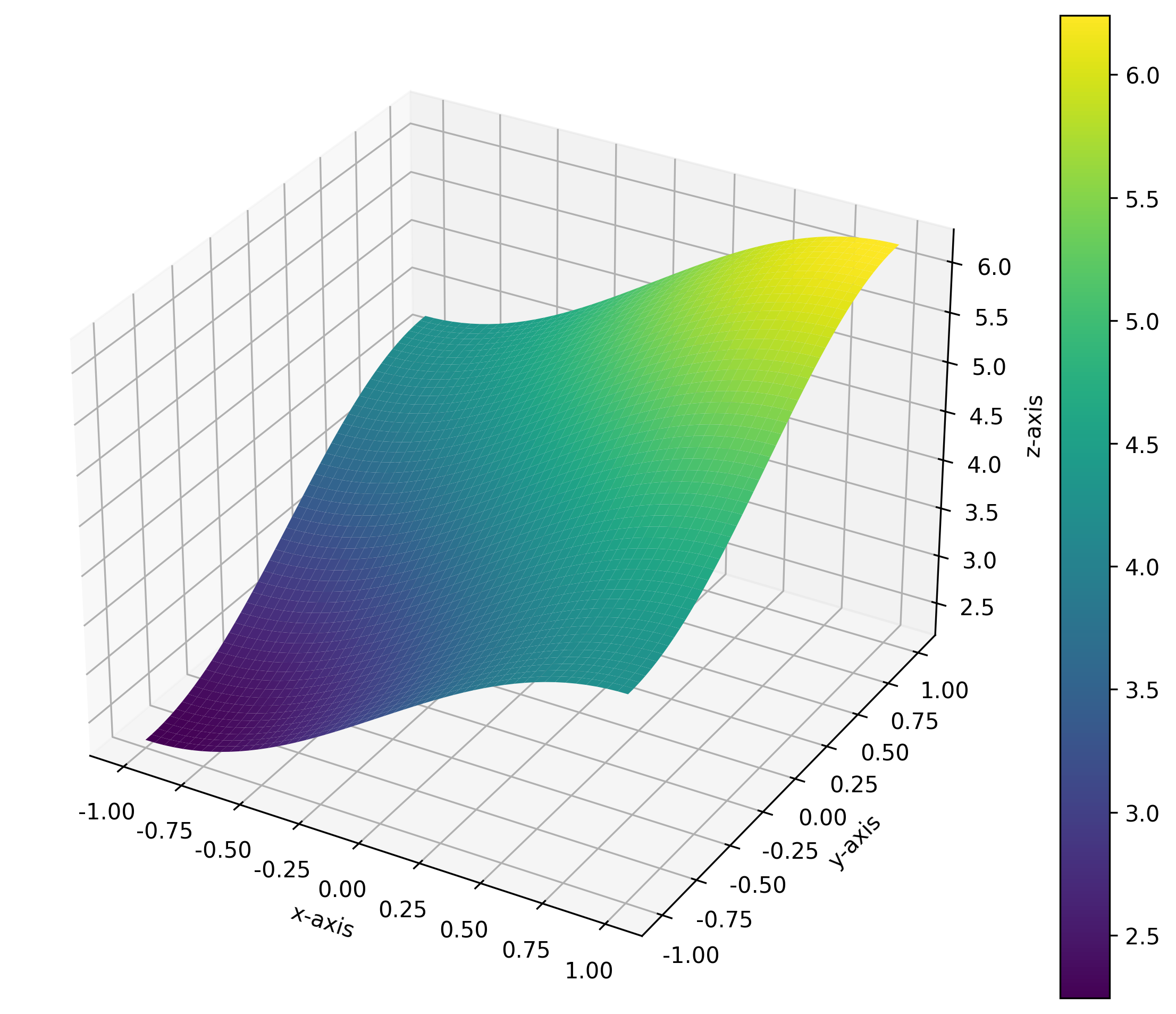}
  \caption{$\rm MIM_a$}
\end{subfigure}\hfill
\begin{subfigure}[t]{0.32\textwidth}
  \centering
  \includegraphics[width=\linewidth]{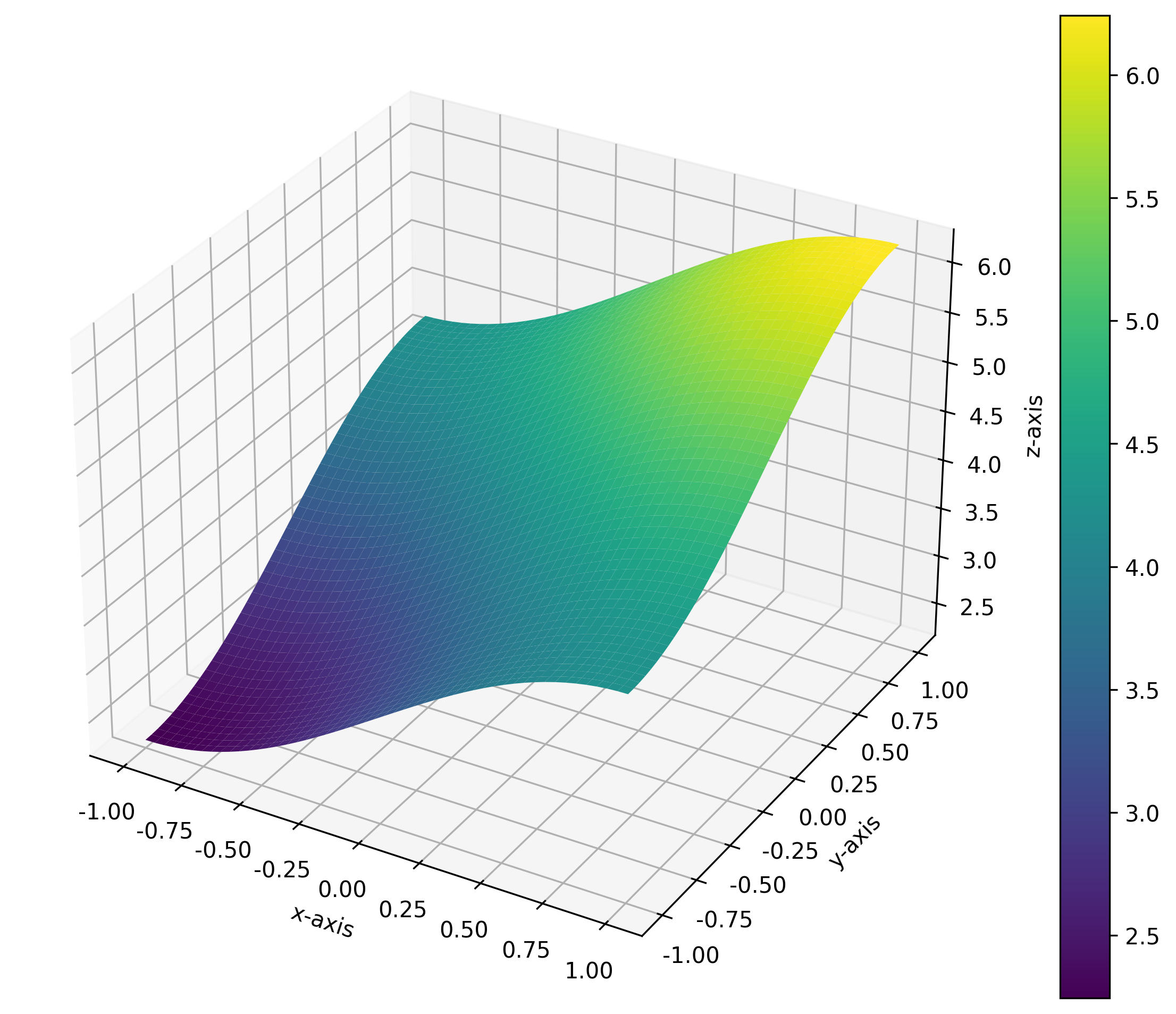}
  \caption{$\rm MIM_b$}
\end{subfigure}\hfill
\begin{subfigure}[t]{0.32\textwidth}
  \centering
  \includegraphics[width=\linewidth]{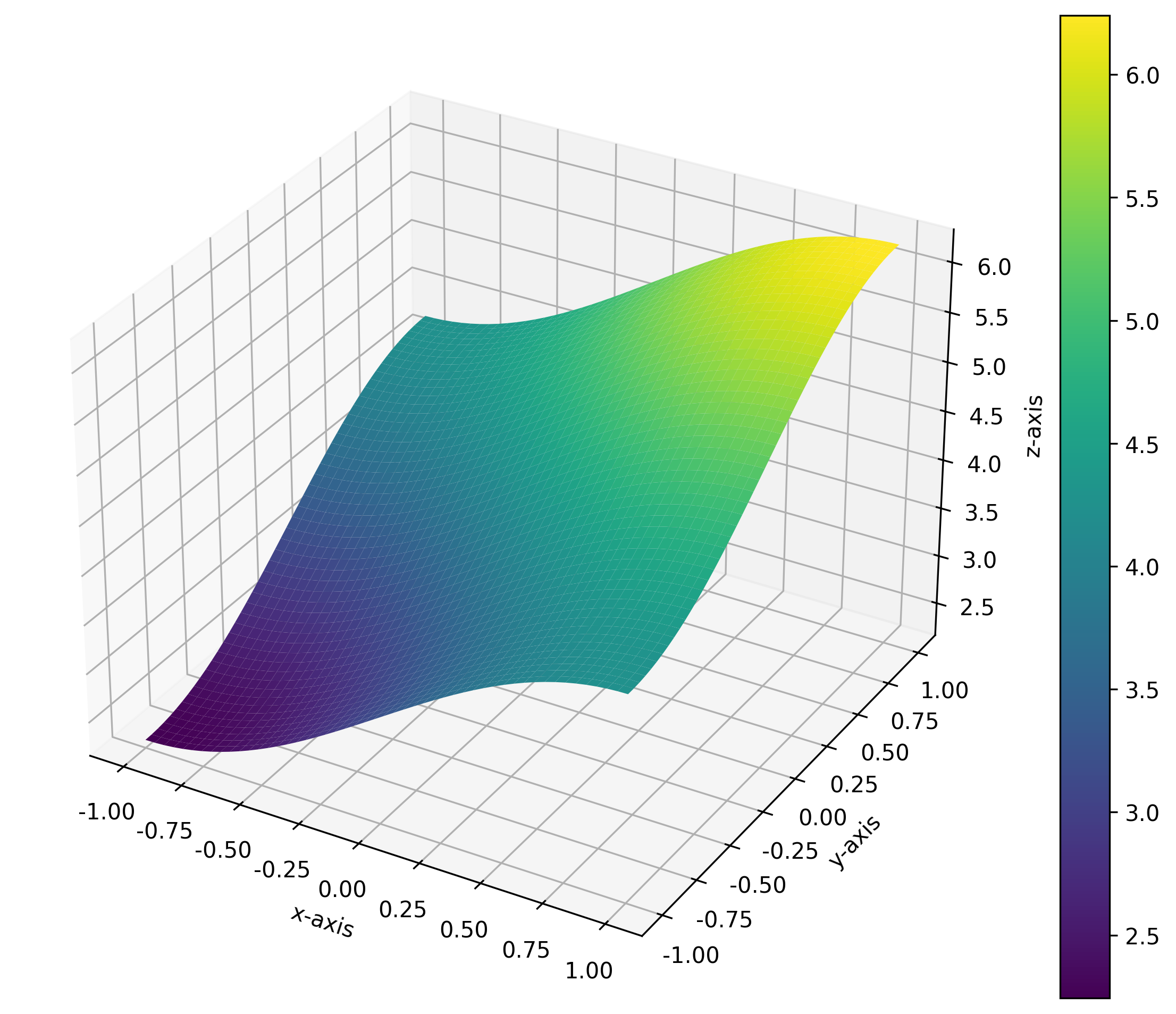}
  \caption{PINN}
\end{subfigure}

\caption{Slice plots of the exact solution and numerical solutions obtained by three methods on the plane $x_3 = \cdots = x_8 = 0.5$.}
\label{fig:slice-solutions1}
\end{figure}

\begin{figure}[htbp]
\centering

\begin{subfigure}[t]{0.32\textwidth}
  \centering
  \includegraphics[width=\linewidth]{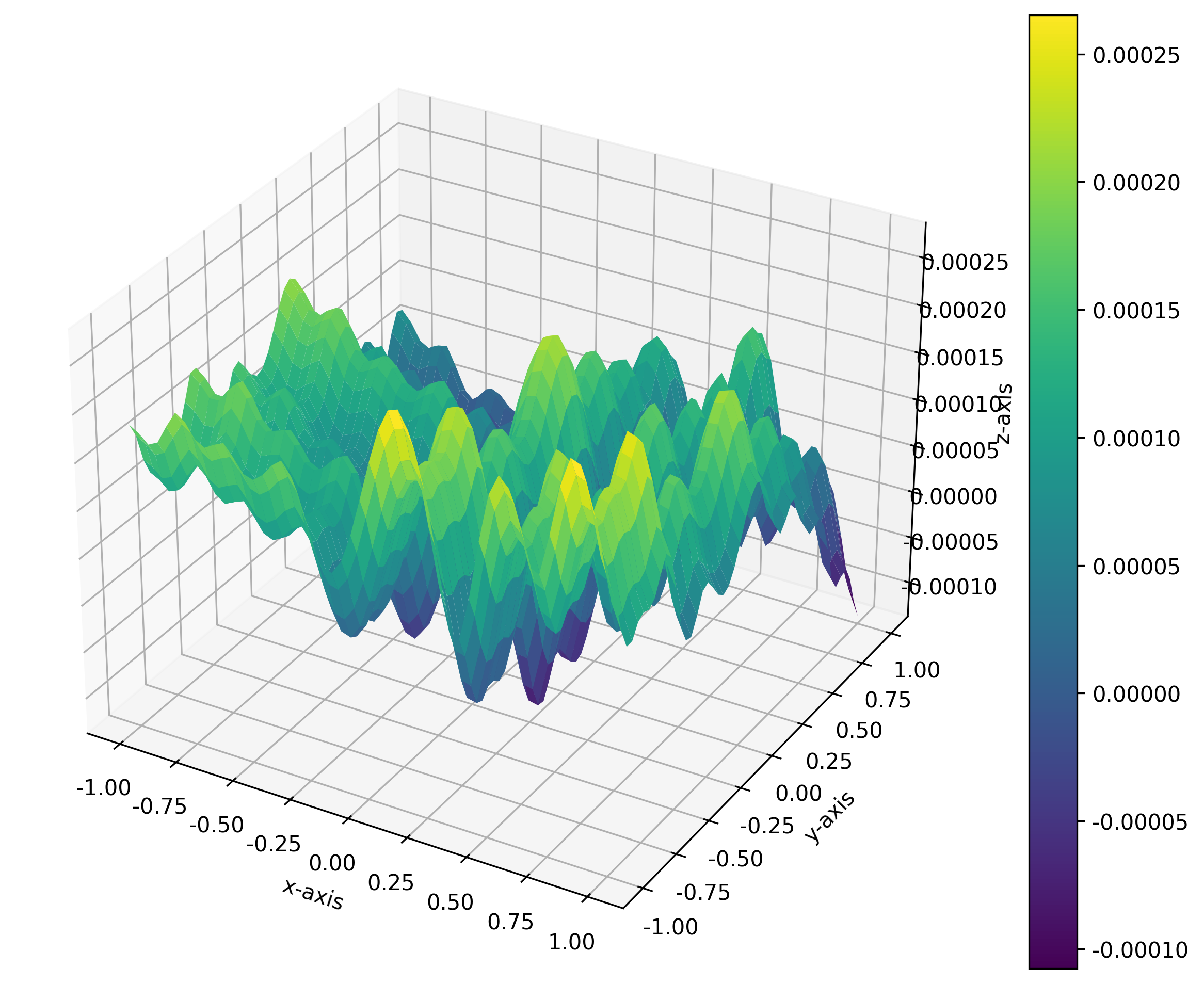}
  \caption{$\rm MIM_a$}
\end{subfigure}\hfill
\begin{subfigure}[t]{0.32\textwidth}
  \centering
  \includegraphics[width=\linewidth]{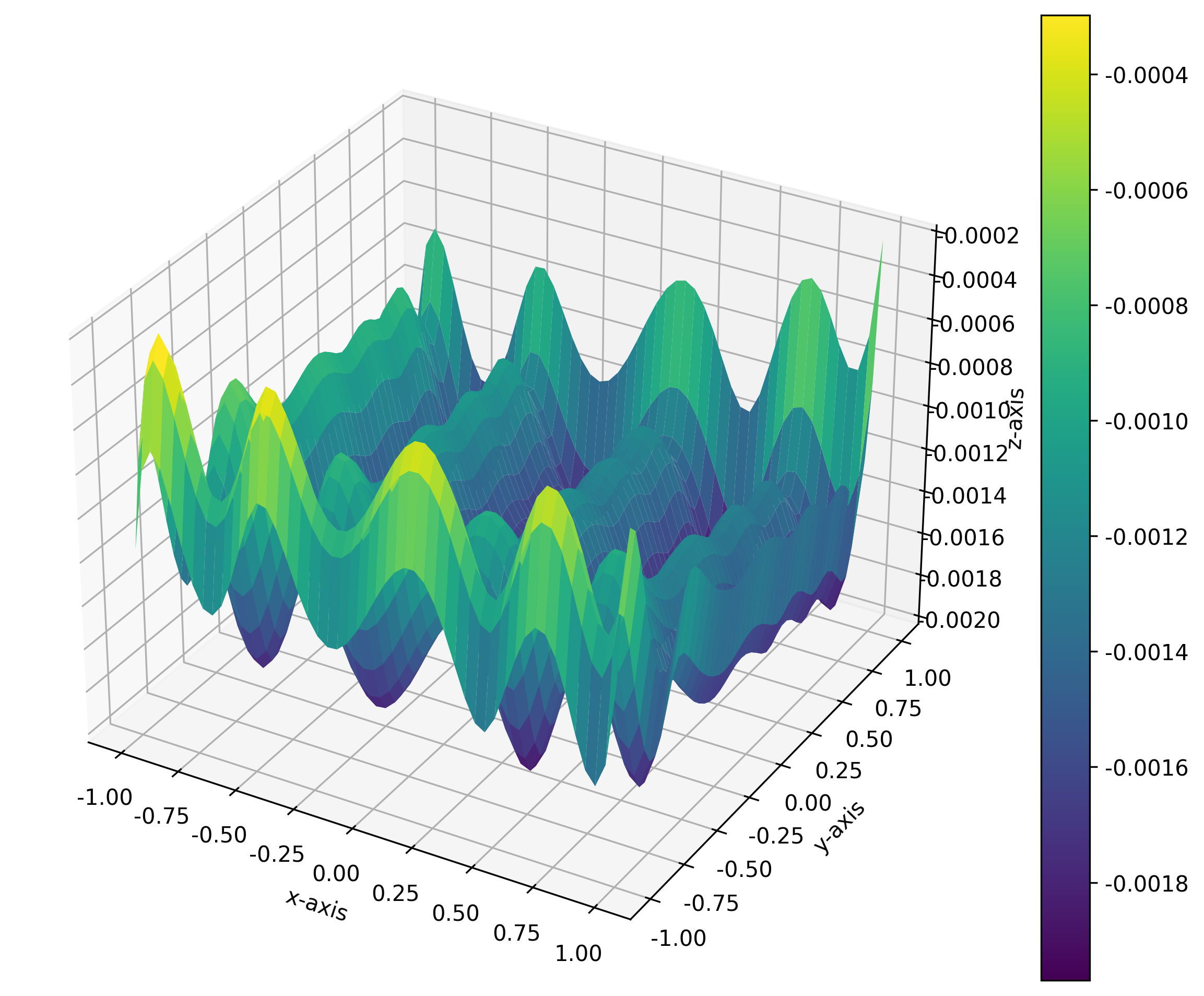}
  \caption{$\rm MIM_b$}
\end{subfigure}\hfill
\begin{subfigure}[t]{0.32\textwidth}
  \centering
  \includegraphics[width=\linewidth]{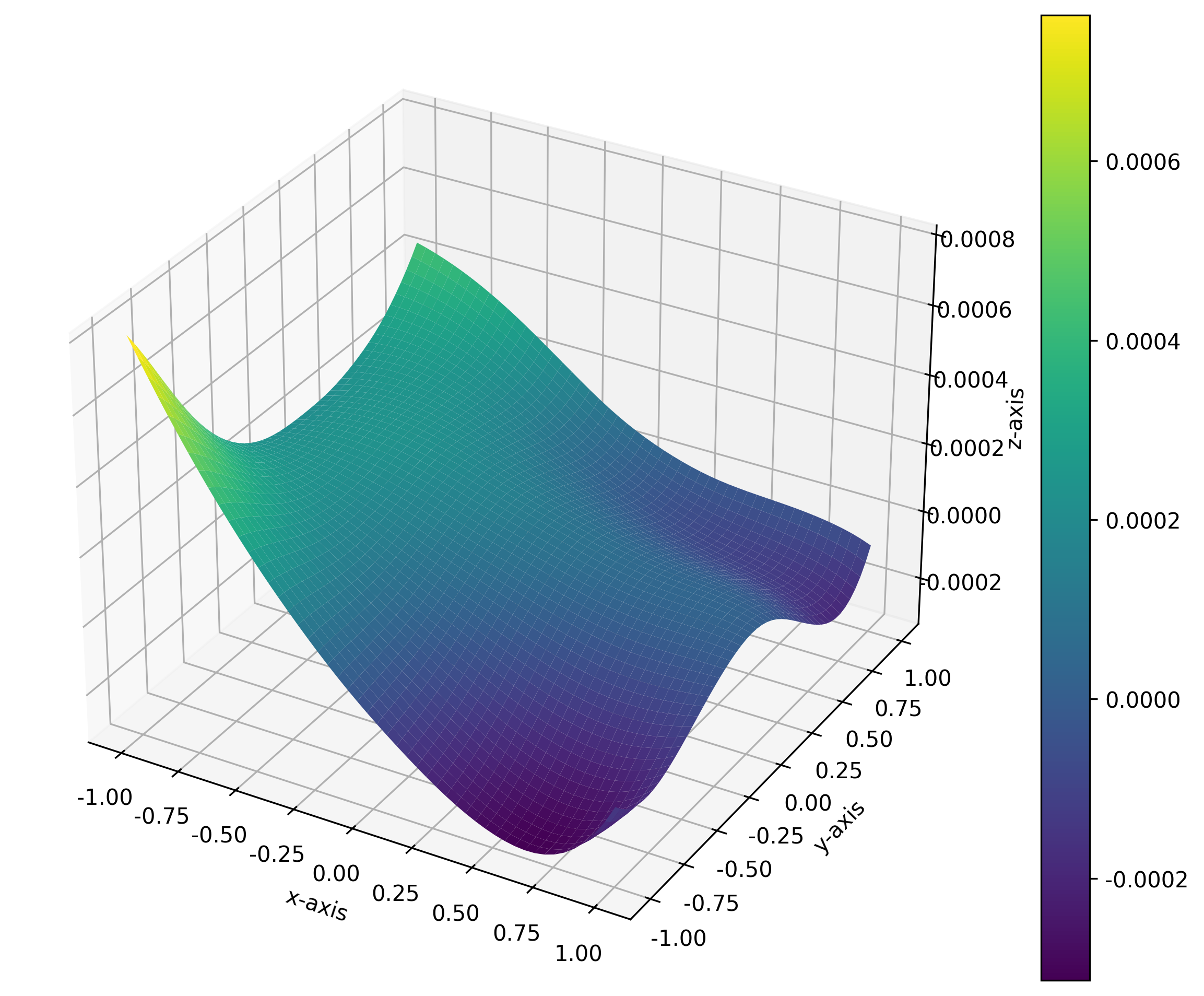}
  \caption{PINN}
\end{subfigure}

\caption{Pointwise error slice plots corresponding to the numerical solutions shown in Figure~\ref{fig:slice-solutions1} on the plane $x_3 =\cdots=x_8 = 0.5$.}
\label{fig:slice-error1}
\end{figure}

Finally, we present some numerical solutions in moderate and high dimensions. In particular, we show two-dimensional slice plots of the exact and numerical solutions in Figures \ref{fig:slice-solutions} and \ref{fig:slice-solutions1}. For the three-dimensional problem, the slice plots are taken on the plane $x_3 = 0.5$ (Figure \ref{fig:slice-solutions}). For the eight-dimensional case, 2D slices are obtained by varying $(x_1,x_2)$ while fixing all remaining variables at $0.5$ (Figure \ref{fig:slice-solutions1}). Furthermore, to clearly illustrate the approximation accuracy of the different methods, the corresponding pointwise error slice plots for both cases are shown in Figures \ref{fig:slice-error} and \ref{fig:slice-error1}, respectively. Notably, the PINN method yields smooth pointwise error in both cases.

In summary, the numerical experiments presented in this section are specifically designed to corroborate our theoretical findings. The consistent alignment between the empirical convergence rates and the theoretical error bounds (as established in Theorem \ref{thm2}) rigorously validates the proposed $\mathrm{MIM_a}$ framework. Furthermore, the stable performance across dimensions $d \in \{2,\,5,\, 6,\, 8\}$ confirms that the system splitting strategy effectively circumvents the curse of dimensionality, exactly as our theoretical analysis predicted.

 
 \section{Conclusion}
In this work, we proposed a mixed residual method and developed a rigorous error bound analysis for neural network approximations of the biharmonic equation subject to nonhomogeneous clamped boundary conditions. Under the assumption that the exact solution admits a spectral Barron space representation, we established a priori error estimates for shallow neural networks approximation that circumvent the curse of dimensionality. Furthermore, our numerical experiments corroborated these theoretical findings, demonstrating that the proposed method achieves an optimal balance in system splitting and outperforms standard comparative baselines. 

Although the same methodology is expected to extend to other higher-order elliptic equations, the treatment of boundary conditions in such problems is typically more delicate. We therefore restrict our current analysis to the classical clamped boundary condition. Several directions remain open for future investigation. For instance, various inverse problems for biharmonic equations are expected to be solvable by neural network-based methods, yet rigorous mathematical foundations for such approaches are still lacking.

\appendix
\section{Proof of Lemma \ref{lem_sigma2approximation}}
\label{se_appendix}
The proof proceeds along similar lines to the arguments developed in \cite{barron1993universal,xurefined} in order to establish a fast approximation rate for shallow neural networks with the $\mathrm{ReCU}$ activation function.

We begin with the main idea of the proof. For any function in the spectral Barron space, we first show that it belongs to the $H^3(\Omega)$-closure of the convex hull of a certain set. Estimating the metric entropy of this set and then applying \cite[Thm 1]{makovoz1996random} yields the fast approximation rate.

Recall that Taylor's expansion with integral remainder states that if $f : \mathbb{R} \to \mathbb{R}$ has $k+1$ continuous derivatives in some neighborhood $U$ of $x = a$, then for any $x \in U$,
\begin{equation*}
    f(x)
    =
    f(a) + f^{(1)}(a)(x-a) + \cdots + 
    \frac{f^{(k)}(a)}{k!}(x-a)^k + \int_0^x \frac{f^{(k+1)}(a)}{k!}(x-t)^k \mathrm{d}t.
\end{equation*}
For a function $f \in \mathcal{B}^4(\Omega)$, by the definition of the spectral Barron space we may assume that the infimum is attained at an extension $f_e$. To simplify notation, we write $f_e$ simply as $f$, since $f_e|_{\Omega} = f$. Using the Fourier inversion formula and the fact that $f$ is real-valued, we obtain
    \begin{equation*}
		\begin{aligned}
		f(x) = &\,
		\mathrm{Re}\int_{\mathbb{R}^d} e^{i\boldsymbol{\omega}\cdot\boldsymbol{x}}\hat{f}(\boldsymbol{\omega}) \mathrm{d}\boldsymbol{\omega}\\
		=&\,
		\mathrm{Re}\int_{\mathbb{R}^d} e^{i\boldsymbol{\omega}\cdot\boldsymbol{x}}e^{i\theta(\boldsymbol{\omega})}|\hat{f}(\boldsymbol{\omega})| \mathrm{d}\boldsymbol{\omega}
		=
	\int_{\mathbb{R}^d} \cos(\boldsymbol{\omega}\cdot\boldsymbol{x} + \theta(\boldsymbol{\omega}))|\hat{f}(\boldsymbol{\omega})| \mathrm{d}\boldsymbol{\omega}\\
	=&
	\int_{\mathbb{R}^d} \frac{B\cos(\boldsymbol{\omega}\cdot\boldsymbol{x} + \theta(\boldsymbol{\omega}))}{(1 + |\boldsymbol{\omega}|_1^2)^2} \Lambda(\mathrm{d}\boldsymbol{\omega})
	=
		\int_{\mathbb{R}^d}g(\boldsymbol{x},\boldsymbol{\omega}) \Lambda(\mathrm{d}\boldsymbol{\omega}),
		\end{aligned}
		\end{equation*}
where $B = \int_{\mathbb{R}^d}(1 + |\boldsymbol{\omega}|_1^2)^2|\hat{f}(\boldsymbol{\omega})|\mathrm{d}\omega$, and $\Lambda(\mathrm{d}\boldsymbol{\omega})
=\frac{1}{B}(1 + |\boldsymbol{\omega}|_1^2)^2|\hat{f}(\boldsymbol{\omega})|\mathrm{d}\boldsymbol{\omega}$ is a probability measure, $e^{i\theta(\boldsymbol{\omega})}$ denotes the phase of $\hat{f}(\boldsymbol{\omega})$, and
\begin{equation*}
	g(\boldsymbol{x},\boldsymbol{\omega})
	=
	\frac{B\cos(\boldsymbol{\omega}\cdot\boldsymbol{x} + \theta(\boldsymbol{\omega}))}{(1 + |\boldsymbol{\omega}|_1^2)^2}.
	\end{equation*}
We define the corresponding function class
    \begin{equation*}
	\mathcal{G}_{\cos}(B) 
	:=
	\Big\{
		\frac{B\cos(\boldsymbol{\omega}\cdot\boldsymbol{x} + \theta(\boldsymbol{\omega}))}{(1 + |\boldsymbol{\omega}|_1^2)^2} : \boldsymbol{\omega}\in \mathbb{R}^d, t\in\mathbb{R}
	\Big\}.
	\end{equation*}
Consequently, from the integral representation of $f$ and the form of $g$, we obtain the following lemma:

\begin{lemma}
Assume $f\in \mathcal{B}^4(\Omega)$ and let $B=\|f\|_{\mathcal{B}^4(\Omega)}$. Then $f$ belongs to the $H^3(\Omega)$-closure of the convex hull of the function class $\mathcal{G}_{\cos}(B)$.    
\end{lemma}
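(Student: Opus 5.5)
We view the integral representation $f(\boldsymbol{x})=\int_{\mathbb{R}^d} g(\boldsymbol{x},\boldsymbol{\omega})\,\Lambda(\mathrm{d}\boldsymbol{\omega})$ as an expectation of the random function $g(\cdot,\boldsymbol{\omega})$ with $\boldsymbol{\omega}\sim\Lambda$. The plan is to approximate this expectation by empirical means of i.i.d.\ samples, a Maurey-type sampling argument carried out in $H^3(\Omega)$. Each empirical mean $f_n=\frac1n\sum_{i=1}^n g(\cdot,\boldsymbol{\omega}_i)$ is a convex combination of elements of $\mathcal{G}_{\cos}(B)$. So it suffices to show that $\mathbb{E}\|f-f_n\|_{H^3(\Omega)}^2\to 0$. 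Then some realization $f_n$ lies arbitrarily close to $f$ in $H^3(\Omega)$, which places $f$ in the closure of the convex hull.

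First I would verify that each $g(\cdot,\boldsymbol{\omega})$ lies in $H^3(\Omega)$ with a norm bound uniform in $\boldsymbol{\omega}$. For any multi-index $\alpha$ with $|\alpha|\le 3$, the derivative $\partial^\alpha g(\boldsymbol{x},\boldsymbol{\omega})$ equals $B\,\boldsymbol{\omega}^\alpha$ times a $\pm\cos$ or $\pm\sin$ of $\boldsymbol{\omega}\cdot\boldsymbol{x}+\theta(\boldsymbol{\omega})$, divided by $(1+|\boldsymbol{\omega}|_1^2)^2$. Since $|\boldsymbol{\omega}^\alpha|\le|\boldsymbol{\omega}|_1^{|\alpha|}\le(1+|\boldsymbol{\omega}|_1^2)^{3/2}$, we get $|\partial^\alpha g|\le B$ pointwise. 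Hence $\|g(\cdot,\boldsymbol{\omega})\|_{H^3(\Omega)}^2\le C_d B^2|\Omega|$, with $C_d$ the number of multi-indices of order at most $3$; we have $|\Omega|\le 1$ because $\Omega\subset(0,1)^d$. The fourth power in the weight thus leaves one spare order beyond $H^3$, although only the bound up to order $3$ is needed here.

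Second, I would justify that $\partial^\alpha f(\boldsymbol{x})=\int\partial^\alpha_{\boldsymbol{x}} g(\boldsymbol{x},\boldsymbol{\omega})\,\Lambda(\mathrm{d}\boldsymbol{\omega})$ for $|\alpha|\le3$. This follows from dominated convergence, because the integrands are bounded by $B$ and $\Lambda$ is a probability measure. Equivalently, it follows directly from the Fourier representation, since $|\boldsymbol{\omega}|^3|\hat f|\in L^1$ when $f\in\mathcal{B}^4$. The sampling identity then gives
\[
\mathbb{E}\|f-f_n\|_{H^3(\Omega)}^2=\frac1n\Big(\mathbb{E}\|g(\cdot,\boldsymbol{\omega})\|_{H^3(\Omega)}^2-\|f\|_{H^3(\Omega)}^2\Big)\le\frac{C_dB^2}{n},
\]
computed derivative by derivative using Fubini and the independence of the samples.

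The main obstacle is bookkeeping rather than depth, and it lies in two places. First, the constant $B$ in the appendix is defined with weight $(1+|\boldsymbol{\omega}|_1^2)^2$ rather than $\langle\boldsymbol{\xi}\rangle^4$, so it is only comparable to $\|f\|_{\mathcal{B}^4(\Omega)}$ up to a factor $d^2$, via $|\boldsymbol{\omega}|_1\le\sqrt d|\boldsymbol{\omega}|$. Second, we must use the extension attaining (or nearly attaining) the quotient infimum. I would handle both by taking an extension with norm at most $(1+\epsilon)\|f\|_{\mathcal{B}^4(\Omega)}$ and absorbing these factors into the definition of $B$. The closure statement itself is insensitive to such constants.
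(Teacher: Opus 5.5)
Your proposal is correct and follows essentially the paper's own argument: i.i.d.\ sampling $\boldsymbol{\omega}_i\sim\Lambda$, the pointwise bound $|D^\alpha g|\le B$ for $|\alpha|\le 3$, the variance estimate $\mathbb{E}\|f-f_n\|_{H^3(\Omega)}^2\le CB^2/n$, and extraction of a good realization (the paper does this last step via Markov's inequality). One correction to your bookkeeping remark: the $(1+\epsilon)$ slack from a near-optimal extension is harmless, since $\mathrm{conv}\,\mathcal{G}_{\cos}(tB)=t\,\mathrm{conv}\,\mathcal{G}_{\cos}(B)$ and the closure is closed. The factor coming from $(1+|\boldsymbol{\omega}|_1^2)^2\le d^2(1+|\boldsymbol{\omega}|^2)^2$ is not harmless in the same way, because these hulls only grow with $B$; so, exactly as in the paper, your argument yields membership in $\overline{\mathrm{conv}}\,\mathcal{G}_{\cos}(d^2\|f\|_{\mathcal{B}^4(\Omega)})$ rather than literally with $B=\|f\|_{\mathcal{B}^4(\Omega)}$, which is still enough downstream, where only $\sum_i|a_i|\le c\|f\|_{\mathcal{B}^4(\Omega)}$ is used.
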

\begin{proof}
The proof proceeds via the probabilistic method. Assume that $\{\boldsymbol{\omega}_i\}_{i=1}^n$ is a sequence of i.i.d. random variables distributed according to $\Lambda$. Then
\begin{equation*}
    \begin{aligned}
        \mathbb{E}\Big[\big\| f(\boldsymbol{x}) - 
        \frac{1}{n}\sum_{i=1}^n g(\boldsymbol{x}, \boldsymbol{\omega}_i) \big\|^2_{H^3(\Omega)} \Big]
        =&
        \int_\Omega \sum_{|\alpha|\leq 3} \mathbb{E}
        \Big[ \big| D^\alpha f(\boldsymbol{x}) - \frac{1}{n}
        \sum_{i=1}^n D^\alpha g(\boldsymbol{x}, \boldsymbol{\omega}_i) \big|^2
        \Big] \, \mathrm{d}\boldsymbol{x} \\
        \leq&
        \frac{1}{n} \int_\Omega \sum_{|\alpha|\leq 3} \operatorname{Var}_{\boldsymbol{\omega}\sim \Lambda}\big( D^\alpha g(\boldsymbol{x},\boldsymbol{\omega}) \big) \, \mathrm{d}\boldsymbol{x}
        \leq
        \frac{C B^2}{n},
    \end{aligned}
\end{equation*}
where we have used the bound
\begin{equation*}
    D^\alpha g(\boldsymbol{x}, \boldsymbol{\omega}_i)
    \leq
    \frac{B|\boldsymbol{\omega}|_1^{|\alpha|}}{(1 + |\boldsymbol{\omega}|_1^2)^2} 
    \leq
    \frac{B}{1 + |\boldsymbol{\omega}|_1} \leq B.
\end{equation*}
Then, for any given tolerance $\varepsilon > 0$, the Markov's inequality yields
\begin{equation*}
\begin{aligned}
   P\Big(\big\| f(\boldsymbol{x}) - 
        \frac{1}{n}\sum_{i=1}^n g(\boldsymbol{x}, \boldsymbol{\omega}_i) \big\|_{H^3(\Omega)} > \varepsilon\Big)
        \leq &
        \frac{1}{\varepsilon^2}
        \mathbb{E}\Big[\big\| f(\boldsymbol{x}) - 
        \frac{1}{n}\sum_{i=1}^n g(\boldsymbol{x}, \boldsymbol{\omega}_i) \big\|^2_{H^3(\Omega)} \Big]
        \leq 
        \frac{C B^2}{\varepsilon^2 n}.
\end{aligned}
\end{equation*}
By choosing $n$ sufficiently large so that $\frac{C B^2}{\varepsilon^2 n} < 1$, we obtain
\begin{equation*}
     P\Big(\big\| f(\boldsymbol{x}) -
        \frac{1}{n}\sum_{i=1}^n g(\boldsymbol{x}, \boldsymbol{\omega}_i) \big\|_{H^3(\Omega)} \leq \varepsilon\Big) > 0,
\end{equation*}
which implies that there exist realizations of the random variables $\{\boldsymbol{\omega}_i\}_{i=1}^n$ for which $\big\| f(\boldsymbol{x}) -
        \frac{1}{n}\sum_{i=1}^n g(\boldsymbol{x}, \boldsymbol{\omega}_i) \big\|_{H^3(\Omega)} \leq \varepsilon$.

\end{proof}

Note that any function $g(\boldsymbol{x},\boldsymbol{\omega})$ can be expressed as the composition of a one-dimensional function
$$g(z) = \frac{B\cos(|\boldsymbol{\omega}|_1 z+ t)}{(1 + |\boldsymbol{\omega}|_1^2)^2},
    \quad \mbox{with a linear function } 
    z = \frac{\boldsymbol{\omega}}{|\boldsymbol{\omega}|_1}\cdot\boldsymbol{x},$$ 
where the variable $z$ takes values in $[-1,1]$. Consequently, in order to prove that $f$ belongs to the $H^3(\Omega)$-closure of the convex hull of the function class $\mathcal{F}_{\sigma_3}(cB)\cup \mathcal{F}_{\sigma_3}(-cB)\cup\{0\}$, it suffices to show that $g$ lies in the $H^3(\Omega)$-closure of the convex hull of the function class $\mathcal{F}_{\sigma_3}^1(cB)\cup \mathcal{F}_{\sigma_3}^1(-cB)\cup\{0\}$, where
     	\begin{equation*}
		\mathcal{F}_{\sigma_3}(b)
		:=
		\{b\sigma_3(\boldsymbol{\omega}\cdot\boldsymbol{x} + t) : |\boldsymbol{\omega}|_1 = 1, t\in [-1,1]\},
		\end{equation*}
	and
		\begin{equation*}
		\mathcal{F}_{\sigma_3}^1(b)
		:=
		\{b\sigma_3(\epsilon z+ t) : \epsilon = +1 \mbox{ or } -1, t\in [-1,1]\},
	\end{equation*}
	for any constant $b\in \mathbb{R}$. 

Since
	\begin{equation*}
		g(z)
		=
	\frac{B\cos(|\boldsymbol{\omega}|_1 z+ t)}{(1 + |\boldsymbol{\omega}|_1^2)^2}
		=
		\frac{B(\cos(|\boldsymbol{\omega}|_1 z)\cos t - \sin(|\boldsymbol{\omega}|_1 z)\sin t) }{(1 + |\boldsymbol{\omega}|_1^2)^2}
		\end{equation*}
with $z\in[-1,1]$, applying the Taylor's expansion with integral remainder to $\cos(|\boldsymbol{\omega}|_1 z)$ and $\sin(|\boldsymbol{\omega}|_1 z)$ about the point $0$ yields
   \begin{equation*}
	\cos(|\boldsymbol{\omega}|_1 z)
	=
	1 - \frac{|\boldsymbol{\omega}|_1^2}{2}z^2 + \int_{0}^z |\boldsymbol{\omega}|_1^4\cos(|\boldsymbol{\omega}|_1 s)\frac{(z-s)^3}{6}\mathrm{d}s,
	\end{equation*}
and
\begin{equation*}
	\sin(|\boldsymbol{\omega}|_1 z)
	=
	|\boldsymbol{\omega}|_1 z - \frac{|\boldsymbol{\omega}|_1^3}{6}z^3 + \int_{0}^z |\boldsymbol{\omega}|_1^4\sin(|\boldsymbol{\omega}|_1 s)\frac{(z-s)^3}{6}\mathrm{d}s.
\end{equation*} 

Note that $z^3$, $z^2$, $z$, and $1$ can be expressed as combinations of the $\sigma_3$ function on $[-1,1]$. Specifically,
\begin{equation*}
	\begin{gathered}
		z^3 
		=
		 \sigma_3(z) - \sigma_3(-z),\quad 
		z 
		=
		 \frac{(z+1)^3 + (z-1)^3 - 2z^3}{6},\\
		z^2 
		=
		 \frac{1}{3}z + \frac{7}{9}z^3 + \frac{1}{9}(z-1)^3 - \frac{8}{9}(z - \frac{1}{2})^3,\ 
		1 
		=
		-3z - 3z^2 - z^3 + (z+1)^3.
		\end{gathered}
	\end{equation*}

Therefore, we need only prove that the integral remainders lie in the $H^3([-1,1])$-closure of the convex hull of the function class $\mathcal{F}_{\sigma_3}^1(cB)\cup \mathcal{F}_{\sigma_3}^1(-cB)\cup\{0\}$. In the sequel, the constant $c$ may vary from line to line, but it always denotes a generic constant; hence we keep the same notation. 
Due to the form of the integral remainder, we consider the general expression
$$h(z) = \int_0^z \varphi(s)(z-s)^3\,\mathrm{d}s,$$
with $\varphi \in C([-1,1])$. Using the identity $(z-s)^3 = (z-s)^3_+ - (-z+s)^3_+$, we obtain
  \begin{equation*}
  	\begin{aligned}
  		h(z) 
  		=
  		 \int_0^z \varphi(s)(z-s)^3_+ \mathrm{d}s
  		- \int_0^z \varphi(s)(-z+s)^3_+ \mathrm{d}s
  		=:
  		A_1 + A_2.
  		\end{aligned}
  	\end{equation*}
We now show that
\begin{equation*}
		A_1 + A_2
		=
		 \int_0^1 \varphi(s)(z-s)^3_+ \mathrm{d}s
		 +
		 \int_0^1 \varphi(-s)(-z-s)^3_+ \mathrm{d}s 
		 =:
		 B_1 +B_2.
	\end{equation*}
For $z \ge 0$, it is straightforward to verify that
\begin{equation}
	A_1
	=
	\int_0^z \varphi(s)(z-s)^3_+ \mathrm{d}s
	=
	\int_0^1 \varphi(s)(z-s)^3_+ \mathrm{d}s
	= B_1, \mbox{ and } A_2 = B_2 = 0, 
	\end{equation}
which yields $A_1 + A_2 = B_1 + B_2$.
For $z < 0$, we similarly have $A_1 = B_1 = 0$, and
\begin{equation}
	\begin{aligned}
	A_2
	=&
	- \int_0^z \varphi(s)(-z+s)^3_+ \mathrm{d}s
	=
	\int_z^0 \varphi(s)(-z+s)^3_+ \mathrm{d}s\\
	=&
	\int_z^0 \varphi(s)(-z+s)^3_+ \mathrm{d}s
	+
		\int_{-1}^z \varphi(s)(-z+s)^3_+ \mathrm{d}s\\
		=&
		\int_{-1}^0 \varphi(s)(-z+s)^3_+ \mathrm{d}s
		=
			\int_{0}^1 \varphi(-s)(-z-s)^3_+ \mathrm{d}s
			= B_2
	\end{aligned}
	\end{equation}
where the third equality follows from the fact that $\int_{-1}^z \varphi(s)(-z+s)^3_+ \,\mathrm{d}s = 0$.
	Hence we conclude that
	\begin{equation*}
			h(z) 
			=
			\int_0^1 \varphi(s)(z-s)^3_+ \mathrm{d}s
			+
			\int_0^1 \varphi(-s)(-z-s)^3_+ \mathrm{d}s
			:= h_1 + h_2.
		\end{equation*}
The next step is to prove that $h_1$ and $h_2$ each lie in the $H^3([-1,1])$-closure of the convex hull of $\mathcal{F}_{\sigma_3}^1(cB)\cup \mathcal{F}_{\sigma_3}^1(-cB)\cup\{0\}$.

Note that $h_1^{(1)}(z) = 3\int_0^1 \varphi(s)(z-s)^2_+ \,\mathrm{d}s$, $h_1^{(2)}(z) = 6\int_0^1 \varphi(s)(z-s)_+ \,\mathrm{d}s$, and $h_1^{(3)}(z) = 6\int_0^1 \varphi(s) \mathbf{1}_{\{z-s \geq 0\}} \,\mathrm{d}s$ almost everywhere, since $(z-s)_+$ is differentiable in $z$ for almost every $s$. Let $\{s_i\}_{i=1}^n$ be an i.i.d. sequence of random variables distributed uniformly on the interval $[0,1]$. Then, by the Fubini's theorem,
\begin{equation*}
	\begin{aligned}
		\mathbb{E}&\Big\| h_1(z)- \frac{1}{n}\sum_{i=1}^n\varphi(s)(z-s)^3_+ \Big\|_{H^3([-1,1])}^2\\
		=&
		\int_{-1}^1
		\mathbb{E}\Big[
		\sum_{|\alpha|\leq 3}
        \big|D^\alpha h_1(z)- \frac{1}{n}\sum_{i=1}^n \varphi(s_i)D^\alpha(z-s_i)^3_+ \big|
		\Big]
		\mathrm{d}z
		\leq
		\frac{C}{n},
		\end{aligned}
	\end{equation*}	
where the last inequality follows from the boundedness of $\varphi$. The same conclusion holds for $h_2(z)$. Consequently, we deduce that $h$ belongs to the $H^3([-1,1])$-closure of the convex hull of the function class $\mathcal{F}_{\sigma_3}^1(cB)\cup \mathcal{F}_{\sigma_3}^1(-cB)\cup\{0\}$. 
 
By applying the variable substitution, we conclude that for any $f\in \mathcal{B}^4(\Omega)$ and any $\epsilon>0$, there exists a shallow $\sigma_3$ neural network such that
	\begin{equation}
		\|f(x)- \sum_{i=1}^m a_i\sigma_3(\boldsymbol{\omega}_i\cdot\boldsymbol{x} + t_i)\|_{H^3(\Omega)}
		\leq
		\epsilon,
		\end{equation}	
where $|\boldsymbol{\omega}_i|_1 = 1$, $|t_i|\leq 1$, $\sum_{i=1}^m|a_i|\leq c\|f\|_{\mathcal{B}^4(\Omega)}$, and $c$ is a universal constant.

Denote $\mathcal{F}_3(1) := \{\sigma_3(\boldsymbol{\omega}\cdot\boldsymbol{x} + t) : |\boldsymbol{\omega}|_1=1,\; t\in [-1,1]\}$. For simplicity, we write $\mathcal{F}_3$ for $\mathcal{F}_3(1)$. The metric entropy of the function class $\mathcal{F}_3$ is defined as
\begin{equation}
	\begin{aligned}
\epsilon_n(\mathcal{F}_3) 
	:=
	\inf\{\epsilon: \mathcal{F}_{\sigma_3} &
	\mbox{ can be covered by at most $n$ sets}\\
	 & \mbox{ of  diameter $\leq \epsilon$ under the $H^3$ norm}\}.
	\end{aligned}
	\end{equation}


Hence it remains only to estimate the metric entropy of the function class $\mathcal{F}_3$ under the $H^3$ norm.
\begin{lemma}[Estimate of the metric entropy]
For any $n\in \mathbb{N}$,
\begin{equation*}
    \epsilon_{n}(\mathcal{F}_3) \leq c\, n^{-\frac{1}{3d}},
\end{equation*}
where $c$ is a generic constant.
\end{lemma}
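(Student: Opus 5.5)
We will bound the covering numbers of $\mathcal{F}_3$ by a direct parametric argument: $\mathcal{F}_3$ is the image of the compact parameter set
\[
P:=\{(\boldsymbol{\omega},t): |\boldsymbol{\omega}|_1=1,\ t\in[-1,1]\}\subset\mathbb{R}^{d+1},
\]
which has dimension $d$, under the map $\Phi(\boldsymbol{\omega},t)=\sigma_3(\boldsymbol{\omega}\cdot\boldsymbol{x}+t)$. The plan is to prove that $\Phi: P\to H^3(\Omega)$ is H\"older continuous with some exponent $\beta>0$, and then to push forward a grid covering of $P$.

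\textbf{Step 1: grid on $P$.} Since $P$ is a $d$-dimensional compact set (the $\ell^1$ sphere times an interval), it can be covered by $n\lesssim \delta^{-d}$ sets of diameter $\delta$ in the $\ell^1$ (or $\ell^\infty$) metric. Equivalently, $n$ pieces have parameter diameter $\delta\sim n^{-1/d}$.

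\textbf{Step 2: H\"older estimate in $H^3$.} For $(\boldsymbol{\omega},t),(\boldsymbol{\omega}',t')\in P$, set $z=\boldsymbol{\omega}\cdot\boldsymbol{x}+t$ and $z'=\boldsymbol{\omega}'\cdot\boldsymbol{x}+t'$, so that $|z-z'|\le |\boldsymbol{\omega}-\boldsymbol{\omega}'|_1+|t-t'|=:\rho$ on $\Omega\subset(0,1)^d$.

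\emph{Derivatives of order at most $2$.} These are $D^\alpha\sigma_3(z)=\boldsymbol{\omega}^\alpha\sigma_3^{(|\alpha|)}(z)$. Here $\sigma_3^{(k)}$ is Lipschitz on bounded sets for $k\le 2$, and $|\boldsymbol{\omega}^\alpha-\boldsymbol{\omega}'^\alpha|\lesssim\rho$. Hence these contributions are $O(\rho)$ in $L^\infty$, and therefore in $L^2(\Omega)$.

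\emph{Third derivatives.} These are $6\,\boldsymbol{\omega}^\alpha\mathbf{1}_{\{z\ge0\}}$, with a jump. The main obstacle is the term $\mathbf{1}_{\{z\ge0\}}-\mathbf{1}_{\{z'\ge0\}}$. It is supported in the set $\{x\in\Omega: |z(x)|\le\rho\}$, which is a slab of width $2\rho/|\boldsymbol{\omega}|_2$. Since $|\boldsymbol{\omega}|_2\ge|\boldsymbol{\omega}|_1/\sqrt d=1/\sqrt d$, this slab has Lebesgue measure $\lesssim\sqrt d\,\rho$ within the unit cube. The squared $L^2$ norm of the difference is therefore $\lesssim\rho$, i.e., the difference itself is $\lesssim\rho^{1/2}$.

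Combining the two cases gives $\|\Phi(\boldsymbol{\omega},t)-\Phi(\boldsymbol{\omega}',t')\|_{H^3(\Omega)}\lesssim \rho^{1/2}$, with a constant depending at most polynomially on $d$.

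\textbf{Step 3: conclusion.} Steps 1–2 give $\epsilon_n(\mathcal{F}_3)\lesssim n^{-1/(2d)}$, which is stronger than the claimed $n^{-1/(3d)}$. If the slab estimate turns out delicate near the boundary of $\Omega$ or for degenerate $\boldsymbol{\omega}$, there is a fallback. One can mollify the indicator at scale $\eta$, which costs $\eta^{1/2}$ in $L^2$ and produces a Lipschitz constant of order $\eta^{-1}$. Balancing $\eta^{1/2}$ against $\rho/\eta$ gives $\eta=\rho^{2/3}$ and a H\"older exponent $1/3$, hence $\epsilon_n\lesssim n^{-1/(3d)}$. This exponent matches the statement and presumably is the authors' bookkeeping.

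The main step to verify carefully is the measure bound for the slab $\{|z|\le\rho\}\cap\Omega$. The remaining steps are routine Lipschitz bounds on polynomial pieces. Feeding the resulting entropy bound into \cite[Thm 1]{makovoz1996random} then yields the rate $m^{-(\frac12+\frac1{3d})}$ of Lemma \ref{lem_sigma2approximation}.
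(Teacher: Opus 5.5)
Your argument is correct, and it proves more than the statement: it gives $\epsilon_n(\mathcal{F}_3)\lesssim n^{-1/(2d)}$, which implies the claimed $n^{-1/(3d)}$.

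Your Step~2 reduction is the same as the paper's. Derivatives of order at most two are handled by Lipschitz bounds for $\sigma_3,\sigma_3',\sigma_3''$ on $[-2,2]$ and for the monomials $\boldsymbol{\omega}^\alpha$. The third derivatives reduce to the $L^2(\Omega)$-distance of the half-space indicators, which is exactly the content of \eqref{h1-h2 H3}.

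The difference is in how the argument is closed. The paper stops at \eqref{h1-h2 H3} and defers the covering argument, including the indicator term, to Proposition A.3 of \cite{xurefined}, getting the exponent $1/(3d)$ from there. You finish directly. The indicator difference is supported in $\{|\boldsymbol{\omega}\cdot\boldsymbol{x}+t|\le\rho\}\cap\Omega$, which has measure $O(\rho)$. Hence the map from parameters to $H^3(\Omega)$ is $\tfrac12$-H\"older, and pushing forward a $\delta$-grid on the $d$-dimensional parameter set $P$ gives the entropy bound. The paper's version is shorter but not self-contained. Yours is self-contained and sharper; fed into \cite[Thm 1]{makovoz1996random}, it would improve the rate in Lemma \ref{lem_sigma2approximation} to $m^{-(\frac12+\frac1{2d})}$.

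Two small remarks on the step you flagged as delicate.

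First, the measure bound needs no special care near $\partial\Omega$ or for degenerate $\boldsymbol{\omega}$. Since $\Omega\subset(0,1)^d$, intersecting with $\Omega$ only shrinks the set. Since $|\boldsymbol{\omega}|_1=1$, some coordinate satisfies $|\omega_j|\ge 1/d$. Integrating in $x_j$ first (Fubini) then gives measure at most $2d\rho$. Your $\sqrt d\,\rho$, obtained from the slab width $2\rho/|\boldsymbol{\omega}|_2$, additionally needs a uniform bound on the $(d-1)$-volume of hyperplane sections of the unit cube. That bound is true but unnecessary here.

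Second, the mollification ``fallback'' is not independent of this estimate. Bounding the mollification error by $\eta^{1/2}$ is the same slab estimate at scale $\eta$. Since the slab bound holds, you can simply drop the fallback.
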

\begin{proof}
Denote $h_k = \sigma_3 (\boldsymbol{\omega}_k \cdot \boldsymbol{x} + t_k)$ for $k=1,2$. We claim that
\begin{equation}\label{h1-h2 H3}
\begin{aligned}
    \|h_1 - h_2\|_{H^3(\Omega)} 
    \leq\,&
    c\big(|\boldsymbol{\omega}_1 - \boldsymbol{\omega}_2|_1^2 + |t_1 - t_2|^2\\
    &+ 
    \|I_{\{\boldsymbol{\omega}_1 \cdot \boldsymbol{x} + t_1 \geq 0\}} 
    - I_{\{\boldsymbol{\omega}_2 \cdot \boldsymbol{x} + t_2 \geq 0\}}\|_{L^2(\Omega)}^2\big).
    \end{aligned}
\end{equation}
Indeed, since $\sigma_3$ is $12$-Lipschitz on $[-2,2]$, we have
\begin{equation*}
\begin{aligned}
    \|h_1 - h_2\|_{L^2(\Omega)}^2
    =&
    \|\sigma_3 (\boldsymbol{\omega}_1 \cdot \boldsymbol{x} + t_1) 
    -
     \sigma_3 (\boldsymbol{\omega}_2 \cdot \boldsymbol{x} + t_2)\|_{L^2(\Omega)}^2\\
    \leq&
    144\|(\boldsymbol{\omega}_1 - \boldsymbol{\omega}_2)\cdot \boldsymbol{x} + (t_1 - t_2)\|_{L^2(\Omega)}^2\\
    \leq&
    288 (|\boldsymbol{\omega}_1 - \boldsymbol{\omega}_2|_1^2 + |t_1 - t_2|).
    \end{aligned}
\end{equation*}
For the first derivative term we have
\begin{equation*}
    \begin{aligned}
        \nabla(h_1 - h_2)
        =&
        3\boldsymbol{\omega}_1\sigma_2 (\boldsymbol{\omega}_1 \cdot \boldsymbol{x} + t_1) 
    -
     3\boldsymbol{\omega}_2\sigma_2 (\boldsymbol{\omega}_2 \cdot \boldsymbol{x} + t_2)\\
     =&
     3(\boldsymbol{\omega}_1 - \boldsymbol{\omega}_2)\sigma_2 (\boldsymbol{\omega}_1 \cdot \boldsymbol{x} + t_1) 
     +
     3\boldsymbol{\omega}_2\big(  \sigma_2 (\boldsymbol{\omega}_1 \cdot \boldsymbol{x} + t_1) 
     -
     \sigma_2 (\boldsymbol{\omega}_2 \cdot \boldsymbol{x} + t_2)
     \big),
    \end{aligned}
\end{equation*}
which yields
\begin{equation*}
    \begin{aligned}
        \|\nabla(h_1 - h_2)\|_{L^2(\Omega)}^2
    \leq&\,
   18\|(\boldsymbol{\omega}_1 - \boldsymbol{\omega}_2)\sigma_2 (\boldsymbol{\omega}_1 \cdot \boldsymbol{x} + t_1) \|_{L^2(\Omega)}^2\\
 &  +
   18\|\boldsymbol{\omega}_2\big(  \sigma_2 (\boldsymbol{\omega}_1 \cdot \boldsymbol{x} + t_1) 
     -
     \sigma_2 (\boldsymbol{\omega}_2 \cdot \boldsymbol{x} + t_2) \|_{L^2(\Omega)}^2\\
     \leq&
     288|\boldsymbol{\omega}_1 - \boldsymbol{\omega}_2|_1^2
     +
     576\big(|\boldsymbol{\omega}_1 - \boldsymbol{\omega}_2|_1^2 + |t_1 - t_2|^2
     \big).
    \end{aligned}
\end{equation*}
Here the last inequality uses the facts that $\sigma_2$ is $4$-Lipschitz on $[-2,2]$ and $|\boldsymbol{\omega}_2|\leq |\boldsymbol{\omega}_2|_1$.

For the second derivative term, we note that
\begin{equation*}
    \begin{aligned}
    \nabla^2(h_1 - h_2)
    =&
        6\sum_{i,j=1}^d \big(\omega_{1i}\omega_{1j}
        \sigma_1 (\boldsymbol{w}_1 \cdot \boldsymbol{x} + t_1)
        -
        \omega_{2i}\omega_{2j}
        \sigma_1 (\boldsymbol{w}_2 \cdot \boldsymbol{x} + t_2)\big)\\
        =&
        6\sum_{i,j=1}^d\Big(
        ((\omega_{1i} - \omega_{2i} )\omega_{1j} + \omega_{2i}(\omega_{1j} - \omega_{2j}))
        \sigma_1 (\boldsymbol{\omega}_1 \cdot \boldsymbol{x} + t_1)\\
        &+
        \omega_{2i}\omega_{2j}\big(  \sigma_1 (\boldsymbol{\omega} \cdot \boldsymbol{x} + t_1) 
     -
     \sigma_1 (\boldsymbol{\omega}_2 \cdot \boldsymbol{x} + t_2)
     \Big).
    \end{aligned}
\end{equation*}
Proceeding similarly to the estimate for the first derivative, we obtain
\begin{equation*}
    \begin{aligned}
        \|\nabla^2(h_1 - h_2)\|_{L^2(\Omega)}^2
        \leq
         648\big(|\boldsymbol{\omega}_1 - \boldsymbol{\omega}_2|_1^2 
         + |t_1 - t_2|^2
     \big).
    \end{aligned}
\end{equation*}
For the third derivative term we have
\begin{equation*}
    \begin{aligned}
        \nabla^3(h_1 - h_2)
        =&
        6\sum_{i,j,k=1}^d \omega_{1i}\omega_{1j}\omega_{1k}
        I_{\{\boldsymbol{\omega} \cdot \boldsymbol{x} + t_1 \geq 0\}}
        -
        \omega_{2i}\omega_{2j}\omega_{2k}
        I_{\boldsymbol{\omega}_2 \cdot \boldsymbol{x} + t_2 \geq 0\}}\\
        =&
        6\sum_{i,j,k=1}^d \Big(
        (\omega_{1i}\omega_{1j}\omega_{1k} - \omega_{2i}\omega_{2j}\omega_{2k})I_{\{\boldsymbol{\omega}_1 \cdot \boldsymbol{x} + t_1 \geq 0\}}\\
       & +
        \omega_{2i}\omega_{2j}\omega_{2k}(I_{\{\boldsymbol{\omega}_1 \cdot \boldsymbol{x} + t_1 \geq 0\}}
        - I_{\{\boldsymbol{\omega}_2 \cdot \boldsymbol{x} + t_2 \geq 0\}})\Big).
    \end{aligned}
\end{equation*}
Using the bounds $\sum_{i,j,k}|\omega_{1i}\omega_{1j}\omega_{1k}|^2 \leq |\boldsymbol{\omega}_1|_1^3 = 1$ and $\sum_{i,j,k}|\omega_{1i}\omega_{1j}\omega_{1k} - \omega_{2i}\omega_{2j}\omega_{2k}|^2 \leq 9|\boldsymbol{\omega}_1 - \boldsymbol{\omega}_2|_1^2$, we deduce that
\begin{equation*}
    \|\nabla^3(h_1 - h_2)\|_{L^2(\Omega)}^2
    \leq
    36\|I_{\{\boldsymbol{\omega}_1 \cdot \boldsymbol{x} + t_1 \geq 0\}}
        - I_{\{\boldsymbol{\omega}_2 \cdot \boldsymbol{x} + t_2 \geq 0\}}\|_{L^2}^2 + 324|\boldsymbol{\omega}_1 - \boldsymbol{\omega}_2|_1^2.
\end{equation*}
Combining the estimates for derivatives of all orders yields \eqref{h1-h2 H3}.

Therefore, following the same approach as in the proof of Proposition A.3 in \cite{xurefined}, we conclude the lemma.    
\end{proof}

\section*{Acknowledgements}
This work was supported by National Key Research and Development Programs of China (No. 2023YFA1009103), NSFC (No. 92570106) and Science and Technology Commission of Shanghai Municipality (No. 23JC1400501). 

\section*{Data Availability Statement}
Data will be made available on request.

\section*{Conflict of Interest}
The authors have no conflicts of interest to declare that are relevant to the content of this article.

\bibliographystyle{amsplain}
\bibliography{reference}

\end{document}